\newlength\deltaH
\newlength\deltaHcenter
\newlength\deltaV
\newlength\deltaVcenter
\newtheorem{theorem}{Theorem}[section]
\newtheorem{lem}{Lemma}[section]
\newtheorem*{thm}{Theorem}
\newtheorem{ozn}{Definition}[section]
\newtheorem{remk}{Remark}[section]
\newtheorem{nas}{Corollary}[section]
\newtheorem{prop}{Proposition}[section]
\newcommand{\rr}{\mathbb{R}}
\newcommand{\nn}{\mathbb{N}}
\renewcommand{\emptyset}{\varnothing}
\newcommand{\Cl}[1]{\overline{#1}}
\newcommand{\Int}[1]{\mathrm{Int}\,{#1}}
\newcommand{\Fr}[1]{\mathrm{Fr}\,{#1}}
\newcommand{\pr}{\mathop\mathrm{pr}\nolimits}
\newcommand{\restrict}[2]{#1\raisebox{-0.65ex}{$\left|\vphantom{{#1}_{#2}}\right.$}_{#2}}
\newcommand{\dist}{\mathop\mathrm{dist}\nolimits}
\newcommand{\df}{\mathop\mathrm{dist}_{\mathcal{F}}\nolimits}
\newcommand{\cM}{\mathcal{M}(\rr^{2})}
\newcommand{\diam}{\mathop\mathrm{diam}\nolimits}
\begin{document}

\begin{center}
{\large\bf On continuous functions on two-dimensional disk which are regular in its interior points.}
\end{center}
\vspace{3mm}
\begin{center}
{\small Yevgen Polulyakh}%
\footnote{
Institute of mathematics, Tereschenkivska str. 3, 01601, Kyiv, Ukraine
\par
e-mail: {\ttfamily polulyah@imath.kiev.ua}
}
\end{center}
\begin{center}
{\small Institute of mathematics of Natl. Acad. Sci. of Ukraine, Kyiv}
\end{center}
\vspace{5mm}

{\noindent \small\textbf{Abstract.}
We introduce a class of regular continuous functions on the closed 2-disk and show that each function from this class is topologically conjugate to a linear function defined on a sqare, a closed half-disk or a closed disk.
}\medskip

{\noindent \small\textbf{Keywords}. regular function, $U$-trajectory, Frechet distance between curves, $\mu$-length of a curve.}

\section*{Introduction}

Let us remind that a pseudo-harmonic function (see~\cite{MrJ}) on a closed domain in the plain (see~\cite{Yu}) is a continuous function such that in a small open neighbourhood of each interior point of the domain it is topologically conjugate to a function $\mathop{Re} z^n + const$ in a open neighbourhood of zero for a certain $n \in \nn$.

In the process of investigation of pseudo-harmonic functions the following problem arises.
Suppose we have two pseudo-harmonic functions $f$ and $g$ on a same closed domain $\Cl{G}$ and all ``singularities'' of $f$ and $g$ are contained in some ``convenient'' subsets $R(f)$ and $R(g)$ of $\Cl{G}$ which are unions of some families of connected components of level sets of $f$ and $g$ respectively with $\Fr{\Cl{G}}$. Suppose that we have a homeomorphism $\Phi_0 : R(f) \rightarrow R(g)$ which complies with the relation $g \circ \Phi_0 = f$.

The question is whether we can extend $\Phi_0$ to a homeomorphism $\Phi : \Cl{G} \rightarrow \Cl{G}$, such that $g \circ \Phi = f$.

In order to construct an extension we have to find a homeomorphism $\Phi_U : \Cl{U} \rightarrow \Cl{V}$ such that $g \circ \Phi_U = f$ and $\Phi_U |_{R(f)} = \Phi_0$ for every connected component $U$ of the set $\Cl{G} \setminus R(f)$ and a connected component $V$ of $\Cl{G} \setminus R(g)$ which is bounded by the set $\Fr{V} = \Phi_0(\Fr{U}) = \Phi_0(\Cl{U} \cap R(f))$.

When a domain $\Cl{G}$ is bounded by a finite number of simple closed curves and functions $f$ and $g$ have a finite number of ``singular'' points both in $G$ and on the frontier $\Fr{G}$ one could verify that for a ``convenient'' subset $R(f)$ a closure $\Cl{U}$ of every component $U$ of the set $\Cl{G} \setminus R(f)$ is homeomorphic to a closed 2-disk (see~\cite{Morse, MrJ}) and $f$ is ``regular'' in a sense on $\Cl{U}$.

In this article we will discuss the definition and different properties of regular functions on a closed 2-disk the most remarkable of which is given by the following statement.

\begin{thm}
Let $f$ and $g$ are regular functions on a closed 2-disk $D$.

Every homeomorphism $\varphi_0 : \partial D \rightarrow \partial D$ of the frontier $\partial D$ of $D$ which complies with the equality $g \circ \varphi_0 = f$ can be extended to a homeomorphism $\varphi : D \rightarrow D$ which satisfies the equality $g \circ \varphi = f$.
\end{thm}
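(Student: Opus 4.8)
\emph{Step 1 (normalisation and parametrisation).} The plan is to reduce, via the conjugacy result announced in the abstract, to a combinatorial picture in which $D$ is parametrised by a square compatibly with both $f$ and $g$, and then to build $\varphi$ as a fibrewise homeomorphism of that square. First, since $f$ and $g$ are regular they attain their extrema on $\partial D$, and $g\circ\varphi_0=f$ forces $f(D)=g(D)=[a,b]$ for a common segment. By the classification of regular functions each of $f,g$ is topologically conjugate to a linear function on a square, a half-disk or a disk; composing such a conjugacy with the standard quotient map from the square onto the model yields continuous surjections
\[
\pi_f,\ \pi_g\colon Q:=[a,b]\times[0,1]\longrightarrow D,\qquad f\circ\pi_f=g\circ\pi_g=\pr_1,
\]
each a homeomorphism except that it collapses the edge $\{a\}\times[0,1]$ (resp.\ $\{b\}\times[0,1]$) to a point precisely when $f^{-1}(a)$ (resp.\ $f^{-1}(b)$) is a single point. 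For $c\in(a,b)$ the set $\pi_f(\{c\}\times[0,1])=f^{-1}(c)$ is an arc meeting $\partial D$ exactly in its two endpoints $p^f_i(c):=\pi_f(c,i)$, $i=0,1$, and the arcs $J^f_i:=\pi_f((a,b)\times\{i\})$ are the two components of $\partial D\setminus(f^{-1}(a)\cup f^{-1}(b))$; similarly for $g$.

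\emph{Step 2 (matching the boundary decompositions).} From $g\circ\varphi_0=f$ one gets $\varphi_0\bigl(f^{-1}(c)\cap\partial D\bigr)=g^{-1}(c)\cap\partial D$ for every $c$; since $\varphi_0$ is a homeomorphism it carries the four-part partition $\{f^{-1}(a),f^{-1}(b),J^f_0,J^f_1\}$ of $\partial D$ onto the corresponding partition for $g$, and it takes arcs to arcs and points to points, so $f^{-1}(a)$ is a point iff $g^{-1}(a)$ is, and likewise at $b$; in particular $f$ and $g$ are of the same model type. After possibly replacing $\pi_g$ by $\pi_g\circ\sigma$ with $\sigma(c,t)=(c,1-t)$ we may assume $\varphi_0(J^f_i)=J^g_i$. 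Then for $c\in(a,b)$ the point $\varphi_0(p^f_0(c))$ lies both in $g^{-1}(c)\cap\partial D=\{p^g_0(c),p^g_1(c)\}$ and in $\varphi_0(J^f_0)=J^g_0$, while $p^g_1(c)\in J^g_1$ is disjoint from $J^g_0$; hence $\varphi_0(p^f_i(c))=p^g_i(c)$ for all $c\in(a,b)$, $i=0,1$, and by continuity of $\varphi_0$ the same identity holds at $c\in\{a,b\}$ for the endpoints of the extreme level arcs in the arc case.

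\emph{Step 3 (the fibrewise homeomorphism).} Choose homeomorphisms $\psi_a,\psi_b$ of $[0,1]$ fixing $0$ and $1$: if $f^{-1}(a)$ is a point put $\psi_a=\mathrm{id}$; otherwise let $\psi_a$ be the homeomorphism determined by $\pi_g(a,\psi_a(t))=\varphi_0(\pi_f(a,t))$, which is legitimate because $\pi_f(a,\cdot)$ and $\pi_g(a,\cdot)$ are then homeomorphisms onto the arcs $f^{-1}(a)$, $g^{-1}(a)$ whose endpoints $\varphi_0$ matches by Step 2; define $\psi_b$ symmetrically. For $c\in[a,b]$ set $\psi_c=\tfrac{b-c}{b-a}\,\psi_a+\tfrac{c-a}{b-a}\,\psi_b$, a convex combination of increasing homeomorphisms of $[0,1]$ fixing the endpoints, hence again such a homeomorphism, and put $\Phi(c,t)=(c,\psi_c(t))$; this is a continuous bijection of the compact space $Q$, hence a homeomorphism. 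The map $\pi_g\circ\Phi$ is continuous and constant on every fibre of $\pi_f$ — the only non-singleton fibres are the end edges, on which, in the collapsed case, $\pi_g$ is itself constant — so, $\pi_f$ being a quotient map, $\pi_g\circ\Phi$ factors as $\varphi\circ\pi_f$ for a unique continuous $\varphi\colon D\to D$.

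\emph{Step 4 (verification, and where the difficulty lies).} One checks that $g\circ\varphi=f$ since $g\circ\pi_g\circ\Phi=\pr_1=f\circ\pi_f$; that $\varphi$ is onto since $\pi_g\circ\Phi$ is; and that $\varphi$ is one-to-one by reading off the first coordinate (via $g$) and then using injectivity of $\psi_c$ and of $\pi_g(c,\cdot)$, the collapsed-edge cases being covered by the type-matching of Step 2 — hence $\varphi$ is a homeomorphism by compactness. Finally $\varphi|_{\partial D}=\varphi_0$: on $J^f_i$ it reads $\varphi(\pi_f(c,i))=\pi_g(c,\psi_c(i))=\pi_g(c,i)=p^g_i(c)=\varphi_0(p^f_i(c))$, and on the extreme level sets it holds by the definition of $\psi_a,\psi_b$ (trivially in the point case). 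The genuinely hard input is not this gluing, which is routine once the structure is in place, but (i) the classification/parametrisation underlying Step 1 — that the level sets of a regular function form an arc/point ``product'' over $[a,b]$ admitting the map $\pi_f$ — and (ii) the rigidity in Step 2 that $\varphi_0$ must match the two boundary decompositions side for side, with compatible end types. If one prefers not to quote the classification, the same construction runs by parametrising each level curve $f^{-1}(c)$ through its normalised $\mu$-length and proving that $(c,t)\mapsto\pi_f(c,t)$ is continuous — controlled uniformly by the Fréchet distance between nearby level curves — which is then the main technical burden.
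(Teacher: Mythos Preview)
Your proof is correct and follows essentially the paper's strategy: invoke the classification (Theorem~\ref{theorem_f_3}) to linearise both $f$ and $g$, then extend the transferred boundary datum on the linear model. The paper simply declares the final theorem a ``straightforward corollary'' of Theorem~\ref{theorem_f_3} without writing out the extension; your fibrewise interpolation on the square parametrisation $Q$ is a clean explicit execution of that step, with the added benefit of handling all three model types ($I^2$, $\Cl{D}^2_+$, $D^2$) uniformly via the quotient maps $\pi_f,\pi_g$.
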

\section{Weakly regular functions on disk and their properties.}\label{subsection_regular_on_interior}

Let $W$ be a domain in the plane $\rr^{2}$, $f : \Cl{W} \rightarrow \rr$ be a continuous function.
We denote
\[
D^{2} = \{ z \;|\; |z| \leq 1 \} \,, \quad
D_{+}^{2} = \{ z \;|\; |z| < 1 \mbox{ and } Im z \geq 0 \} \,.
\]

\begin{ozn}\label{ozn_f_1}
We call $z_{0} \in W$ a \emph{regular point} of the function $f$ if there exist an open neighbourhood $U \subseteq W$ of $z_0$ and a homeomorphism $\varphi : U \rightarrow \Int{D^{2}}$ such that $\varphi(z_{0}) = 0$ and $f \circ \varphi^{-1}(z) = Re z + f(z_{0})$ for all $z \in \Int{D^{2}}$.

$U$ is called a \emph{canonical neighbourhood} of $z_0$.
\end{ozn}

\begin{ozn}\label{ozn_f_2}
Call $z_{0} \in \Fr{W}$ a \emph{regular boundary point} of $f$ if there exist an open neighbourhood $U$ in the space $\Cl{W}$ and a homeomorphism $\psi : U \rightarrow D_{+}^{2}$ such that $\psi(z_{0}) = 0$, $\psi(U \cap f^{-1}(f(z_{0}))) = \{0\} \times [0, 1)$, $\psi(U \cap \Fr{W}) = (-1, 1) \times \{0\}$ and a function $f \circ \psi^{-1}$ is strictly monotone on the interval $(-1, 1) \times \{0\}$.

A neighbourhood $U$ is called \emph{canonical}.
\end{ozn}

\begin{remk}\label{remk_kanon_neighb}
It is easy to see that canonical neighbourhood in definitions~\ref{ozn_f_1} and~\ref{ozn_f_2} can be chosen arbitrarily small.
\end{remk}

Let $D$ is a closed subset of the plane which is homeomorphic to $D^2$. Let us fix a bypass direction of a boundary circle $\Fr{D}$.

Assume that when we bypass the circle $\Fr{D}$ in the positive direction we consecutively pass through points $z_1, \ldots, z_{2n-1}, z_{2n}$ for some $n \geq 2$, and also not necessarily $z_k \neq z_{k+1}$. For every $k \in \{1, \ldots, 2n\}$ we designate by $\gamma_k$ an arc of the circle $\Fr{D}$ which originates in $z_k$ and ends in either $z_{k+1}$ when $k < 2n$ or $z_1$ if $k = 2n$, so that the movement direction along it coinsides with the bypass direction of $\Fr{D}$. Write $\mathring{\gamma}_k = \gamma_k \setminus \{z_k, z_{k+1}\}$ when $k \in \{1, \ldots, 2n-1\}$, $\mathring{\gamma}_{2n} = \gamma_{2n} \setminus \{z_{2n}, z_1\}$.

\begin{ozn}\label{ozn_weak_regular}
Assume that for a continuous function $f : D \rightarrow \rr$ there exist such $n = \mathcal{N}(f) \geq 2$ and a sequence of points $z_1, \ldots, z_{2n-1}, z_{2n} \in \Fr{D}$ (which are passed through in this order when the circle $\Fr{D}$ is bypassed in the positive direction) that following properties are fulfilled:
\begin{itemize}
	\item [1)] every point of a domain $\Int{D} = D \setminus \Fr{D}$ is a regular point of $f$;
	\item [2)] $\mathring{\gamma}_{2k-1} \neq \emptyset$ for $k \in \{1, \ldots, n\}$ and every point of an arc $\mathring{\gamma}_{2k-1}$ is a regular boundary point of $f$ (specifically, the restriction of $f$ onto $\gamma_{2k-1}$ is strictly monotone);
	\item [3)] arcs $\gamma_{2k}$, $k \in \{1, \ldots, n\}$ are connected components of level curves of the function $f$.
\end{itemize}
We call such functions \emph{weakly regular on $D$}.
\end{ozn}

\begin{prop}\label{prop_w_r_correctness}
Let $f$ is a weakly regular function on $D$.

A set $\bigcup_{k=1}^n \gamma_{2k}$ does not contain regular boundary points of $f$, therefore the number $\mathcal{N}(f)$ is well defined and coincides with the number of connected components of the set of regular boundary points of $f$.
\end{prop}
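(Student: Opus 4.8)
The plan is to reduce the Proposition to its first assertion — that no point of $\bigcup_{k=1}^{n}\gamma_{2k}$ is a regular boundary point of $f$ — after which the statement about $\mathcal N(f)$ follows formally. The first assertion I would prove by comparing connected components of level sets, which disposes at once of interior points of the arcs $\gamma_{2k}$, of their endpoints $z_j$, and of arcs $\gamma_{2k}$ that happen to be single points.

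Fix $k\in\{1,\dots,n\}$ and $z_0\in\gamma_{2k}$, and put $c=f(z_0)$. By property~3) of Definition~\ref{ozn_weak_regular}, $\gamma_{2k}$ is a connected component of a level set of $f$; since $z_0\in\gamma_{2k}$, that level set is $f^{-1}(c)$, so $\gamma_{2k}$ is precisely the connected component $K$ of $f^{-1}(c)$ through $z_0$, and $K\subseteq\Fr{D}$. Assume, for contradiction, that $z_0$ is a regular boundary point, and choose a canonical neighbourhood $U$ with a homeomorphism $\psi:U\to D_{+}^{2}$ as in Definition~\ref{ozn_f_2}, applied to $W=\Int{D}$ (so that $\Fr{W}=\Fr{D}$). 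Then $U\cap f^{-1}(c)=\psi^{-1}(\{0\}\times[0,1))$ is connected and contains $z_0=\psi^{-1}(0)$, hence $K\cup\bigl(U\cap f^{-1}(c)\bigr)$ is a connected subset of $f^{-1}(c)$, and maximality of $K$ forces $U\cap f^{-1}(c)\subseteq K\subseteq\Fr{D}$. But $\psi(U\cap\Fr{D})=(-1,1)\times\{0\}$, so $\psi^{-1}(\{0\}\times(0,1))$ is a nonempty subset of $U\setminus\Fr{D}\subseteq\Int{D}$, contradicting $U\cap f^{-1}(c)\subseteq\Fr{D}$. (When $z_0\in\mathring\gamma_{2k}$ one may instead shrink $U$ via Remark~\ref{remk_kanon_neighb} so that $U\cap\Fr{D}\subseteq\mathring\gamma_{2k}$, where $f\equiv c$, against the strict monotonicity of $f\circ\psi^{-1}$ on $(-1,1)\times\{0\}$; but the level-set argument is needed to cover the endpoints and the case $\mathring\gamma_{2k}=\emptyset$.)

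For the rest, let $R$ be the set of all regular boundary points of $f$ — a set determined by $f$ and $D$ alone. Property~2) gives $\bigcup_{k=1}^{n}\mathring\gamma_{2k-1}\subseteq R$, and the first assertion gives $R\cap\bigcup_{k=1}^{n}\gamma_{2k}=\emptyset$. Since $\Fr{D}=\bigcup_{j=1}^{2n}\gamma_j$ and $\bigcup_{k=1}^{n}\gamma_{2k}=\bigcup_{k=1}^{n}\mathring\gamma_{2k}\cup\{z_1,\dots,z_{2n}\}$, the circle $\Fr{D}$ is the disjoint union of $\bigcup_{k=1}^{n}\mathring\gamma_{2k-1}$ and $\bigcup_{k=1}^{n}\gamma_{2k}$; hence $R=\bigcup_{k=1}^{n}\mathring\gamma_{2k-1}$. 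Each $\mathring\gamma_{2k-1}$ is nonempty, connected, and clopen in $R$ (its closure $\gamma_{2k-1}$ in $\Fr{D}$ meets $R$ only in $\mathring\gamma_{2k-1}$, since $z_{2k-1},z_{2k}\notin R$), so the sets $\mathring\gamma_{2k-1}$, $k=1,\dots,n$, are exactly the connected components of $R$, and $R$ has $n$ of them. As $R$ does not depend on the auxiliary data $n,z_1,\dots,z_{2n}$, and this number of components equals $\mathcal N(f)=n$ for every admissible choice, $\mathcal N(f)$ is well defined and equals the number of connected components of the set of regular boundary points of $f$. I expect the only delicate point to be the uniform handling of the endpoints $z_j$ and of arcs $\gamma_{2k}$ that may degenerate to points, which is exactly why the key step is phrased through connected components of level sets rather than through monotonicity alone.
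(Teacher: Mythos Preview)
Your proof is correct and follows essentially the same route as the paper's: both arguments hinge on the observation that the connected component of $f^{-1}(f(z))$ through a regular boundary point $z$ contains $\psi^{-1}(\{0\}\times(0,1))\subseteq\Int D$, which is incompatible with condition~3) forcing that component to be $\gamma_{2k}\subseteq\Fr D$. The paper phrases this contrapositively (start with a regular boundary point, conclude $z\notin\bigcup\gamma_{2k}$) while you argue by contradiction from $z_0\in\gamma_{2k}$, but the content is identical; your treatment of the second part, showing explicitly that the $\mathring\gamma_{2k-1}$ are the connected components of $R$, is in fact more detailed than the paper's one-line conclusion.
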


\begin{proof}
Let $z \in \Fr{D}$ is a regular boundary point of $f$. Denote by $\Gamma_z$ a connected component of level curve of $f$ which contains $z$. We fix a canonical neighbourhood $U$ of $z$ and a homeomorphism $\psi : U \rightarrow D_{+}^{2}$ from definition~\ref{ozn_f_2}. Then, as it could be easily verified, $\psi^{-1}(\{0\}\times [0,1)) \subseteq \Gamma_z$ and $\emptyset \neq \psi^{-1}(\{0\}\times (0,1)) \subseteq \Gamma_z \cap \Int{D}$. Therefore it follows from the condition 3 of definition~\ref{ozn_weak_regular} that $z \notin \bigcup_{k=1}^n \gamma_{2k}$.

Hence, the number of arcs $\gamma_{2k-1}$, $k \in \{1, \ldots, n\}$ coincides with the number of connected components of the set of regular boundary points of $f$. It depends only on $f$ and the number $\mathcal{N}(f)$ is well defined.
\end{proof}

\begin{lem}\label{lem_prop_of_weak_reg}
Let a function $f$ is weakly regular on $D$.

Every connected component of nonempty level set of $f$ is either a point $z_{2k}$, $k \in \{1, \ldots, n\}$ if $z_{2k} = z_{2k+1}$, or a support of a simple continuous curve $\gamma : I \rightarrow D$ which satisfies to the following properties:
\begin{itemize}
	\item endpoints $\gamma(0)$ and $\gamma(1)$ belong to distinct arcs $\gamma_{2j-1}$ and $\gamma_{2k-1}$, $j, k \in \{1, \ldots, n\}$, $j \neq k$;
	\item either $\gamma(I) \setminus \{\gamma(0), \gamma(1)\} \subset \Int{D}$ or $\gamma(I) = \gamma_{2k}$ for a certain $k \in \{1, \ldots, n\}$.
\end{itemize}
\end{lem}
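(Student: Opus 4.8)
The plan is to fix a value $c$ with $f^{-1}(c)\neq\emptyset$, take a connected component $C$ of the level set $f^{-1}(c)$, and split into two cases according to whether or not $C$ equals one of the arcs $\gamma_{2\ell}$. Since $f^{-1}(c)$ is closed in the compact space $D$, the component $C$ is compact and connected. If $C=\gamma_{2k}$ for some $k$, then either $z_{2k}=z_{2k+1}$, so that $\gamma_{2k}$ is degenerate and $C=\{z_{2k}\}$, or $z_{2k}\neq z_{2k+1}$, in which case $\gamma_{2k}$ is a non-degenerate subarc of $\Fr D$, that is, the support of a simple curve $\gamma:I\to D$ with $\gamma(I)=\gamma_{2k}$, whose endpoints are $z_{2k}\in\gamma_{2k-1}$ and $z_{2k+1}\in\gamma_{2k+1}$ (with $z_{2n+1}=z_1\in\gamma_1$ when $k=n$); the arcs $\gamma_{2k-1}$ and $\gamma_{2k+1}$ (indices mod $2n$) are distinct because $n\geq2$. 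This is the second alternative in the statement.

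From now on assume $C\neq\gamma_{2\ell}$ for every $\ell$. First I would note that $C\cap\gamma_{2\ell}=\emptyset$ for all $\ell$: if $p\in C\cap\gamma_{2\ell}$, then $f(p)$ equals both $c$ and the constant value of $f$ on $\gamma_{2\ell}$, so $C$ and $\gamma_{2\ell}$ are connected components of one and the same level set which share the point $p$, whence $C=\gamma_{2\ell}$ --- contrary to our assumption. Because $\Fr D=\bigcup_{j=1}^{n}\mathring\gamma_{2j-1}\cup\bigcup_{\ell=1}^{n}\gamma_{2\ell}$ (the points $z_i$ all lie on the even arcs), it follows that $C\subseteq\Int D\cup\bigcup_{j}\mathring\gamma_{2j-1}$; thus every point of $C$ is either a regular interior point or a regular boundary point of $f$.

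Next I would verify that $C$ is a compact connected one-dimensional manifold with boundary, whose boundary is $\partial C=C\cap\Fr D\subseteq\bigcup_j\mathring\gamma_{2j-1}$ while its remaining points form $C\cap\Int D$. Indeed, for $p\in C\cap\Int D$ take a canonical neighbourhood $U$ and $\varphi:U\to\Int{D^{2}}$ as in Definition~\ref{ozn_f_1}; then $f^{-1}(c)\cap U=\varphi^{-1}(\{0\}\times(-1,1))$ is connected and meets $C$, hence is contained in $C$, so $C\cap U=\varphi^{-1}(\{0\}\times(-1,1))$ is an open neighbourhood of $p$ in $C$ homeomorphic to an open interval with $p$ in its interior. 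For $p\in C\cap\Fr D$ take $U$ and $\psi:U\to D_{+}^{2}$ as in Definition~\ref{ozn_f_2}; the same reasoning gives $C\cap U=\psi^{-1}(\{0\}\times[0,1))$, a half-open arc with $p$ at its closed end, so $p$ is a boundary point of $C$ lying on $\Fr D$. (By Remark~\ref{remk_kanon_neighb} these neighbourhoods may be taken arbitrarily small, which is all that is needed.) A compact connected one-dimensional manifold with boundary is homeomorphic to $S^{1}$ or to $I$, so it remains to rule out $C\cong S^{1}$.

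Suppose $C\cong S^{1}$. Then $C$ has empty boundary, hence $C\subseteq\Int D$; since $\Int D$ is homeomorphic to the plane, the Jordan curve theorem yields a compact set $\Delta\subseteq\Int D$ with $\Fr\Delta=C$ and nonempty interior. The continuous function $f$ attains a maximum and a minimum on $\Delta$. If the maximum were greater than $c$, it would be attained at some $q$ in the interior of $\Delta$ (as $f\equiv c$ on $C=\Fr\Delta$); choosing, by Remark~\ref{remk_kanon_neighb}, a canonical neighbourhood of $q$ contained in the interior of $\Delta$, on which $f\circ\varphi^{-1}(z)=\mathop{Re}z+f(q)$, the points with $\mathop{Re}z>0$ would give values of $f$ on $\Delta$ exceeding the maximum --- impossible; symmetrically the minimum cannot be smaller than $c$. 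Hence $f\equiv c$ on $\Delta$, contradicting the normal form $f\circ\varphi^{-1}(z)=\mathop{Re}z+c$ at any interior point of $\Delta$. Therefore $C$ is the support of a simple curve $\gamma:I\to D$, its endpoints $\gamma(0)\neq\gamma(1)$ are precisely the two boundary points of the manifold and so lie in $\bigcup_{j}\mathring\gamma_{2j-1}$, and $\gamma(I)\setminus\{\gamma(0),\gamma(1)\}=C\cap\Int D\subseteq\Int D$. Finally, since $f$ restricted to each arc $\gamma_{2j-1}$ is strictly monotone, the two endpoints --- which share the value $c$ --- cannot lie on the same arc $\gamma_{2j-1}$, so $\gamma(0)\in\mathring\gamma_{2j-1}$ and $\gamma(1)\in\mathring\gamma_{2k-1}$ with $j\neq k$, as claimed. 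The one genuinely non-routine point is this last exclusion of a closed level curve inside $\Int D$, where the global Jordan curve theorem together with the extremum argument built on the local normal form of a regular point are needed; everything else is local bookkeeping with Definitions~\ref{ozn_f_1} and~\ref{ozn_f_2} and the maximality of connected components.
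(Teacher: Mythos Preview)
Your proof is correct and follows essentially the same route as the paper's. The only cosmetic difference is the case split: the paper divides according to whether $\Gamma_c\cap\Int D$ is empty (and then deduces $\Gamma_c=\gamma_{2k}$ from condition~3), whereas you split directly on whether $C=\gamma_{2\ell}$ and use the maximality of components to get $C\cap\gamma_{2\ell}=\emptyset$ in the other case; the manifold argument and the exclusion of a closed level curve via the extremum principle are the same in both.
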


\begin{proof}
Assume that $c \in \rr$ complies with the inequality $f^{-1}(c) \neq \emptyset$. Let us consider a connected component $\Gamma_c$ of the level set $f^{-1}(c)$. There are two possibilities.

\medskip
{\bfseries 1)} Let $\Gamma_c \cap \Int{D} = \emptyset$. Then $\Gamma_c = \gamma_{2k}$ for a certain $k \in \{1, \ldots, n\}$.

Really, if $\Gamma_c \not\subset \bigcup_{k=1}^n \gamma_{2k}$ then there exists a regular boundary point $w \in \Gamma_c$. It follows from definition~\ref{ozn_f_2} that a portion of the connected component $\Gamma_c$ which is contained in a canonical neighbourhood of the point $w$ has a nonempty intersection with $\Int{D}$.

But if $\Gamma_c \subset \bigcup_{k=1}^n \gamma_{2k}$ then the statement of lemma follows from property 3 of the definition of a weakly regular function on $D$.

In the case under consideration the set $\Gamma_c = \gamma_{2k}$ is either a single-point or a support of a simple continuous curve which endpoints are contained in the sets $\gamma_{2k-1}$ and $\gamma_{2k+1}$ when $k \in \{1, \ldots, n-1\}$ or in $\gamma_{2n-1}$ and $\gamma_1$ if $k=n$.

\medskip
{\bfseries 2)}
Let $\Gamma_c \cap \Int{D} \neq \emptyset$. Then the set $\Gamma_c$ is a support of a simple continuous curve $\gamma : I \rightarrow D$, with $\Gamma_c \cap \Fr{D} = \{\gamma(0), \gamma(1)\} \subset \bigcup_{k=1}^n \mathring{\gamma}_{2k-1}$.

Let us verify this.

It follows from the condition 3 of definition~\ref{ozn_weak_regular} that $\Gamma_c \cap \Fr{D} \subset \bigcup_{k=1}^n \mathring{\gamma}_{2k-1}$. Therefore by definition all points of $\Gamma_c \cap \Fr{D}$ are regular boundary points of $f$.
All remaining points of the set $\Gamma_c$ belong to $\Int D$ and are regular points of $f$.

Denote by $\Theta : (-1, 1) \rightarrow \Int{D^{2}}$ a mapping
\[
\Theta(s) = (0, s), \quad s \in (-1, 1) \,.
\]
It is clear that $\Theta$ is the homeomorphism onto its image.
Denote also
\[
\hat{\Theta} = \restrict{\Theta}{[0, 1)} : [0, 1) \rightarrow D^{2}_{+} \,.
\]
This mapping is obviously also the embedding.

Let $v \in \Int{D} \cap \Gamma_c$. By definition $v$ is the regular point of $f$. Let $U_{v}$ and $\varphi_{v} : U_{v} \rightarrow \Int{D^{2}}$ are a neighbourhood and a homeomorphism from definition~\ref{ozn_f_1}. Then $\varphi_{v}(f^{-1}(f(v))) = \{0\} \times (-1, 1)$, therefore $\varphi_v(\Gamma_c) = \{0\} \times (-1, 1)$, a mapping $\Theta^{-1} \circ \varphi_{v} = \Phi_{v} : Q_{v} = \Gamma_c \cap U_{v} \rightarrow (-1, 1)$ is well defined and it maps $Q_{v}$ homeomorphically onto $(-1, 1)$.
By construction the set $Q_{v}$ is an open neighbourhood of $v$ in the space $\Gamma_c$.

So, a map $(Q_{v}, \Phi_{v} : Q_{v} \rightarrow (-1, 1))$ is associated to every point $v \in \Int{D} \cap \Gamma_c$.

By analogy, if $w \in \Gamma_c \cap \Fr{D}$ then for its neighbourhood $U_{w}$ and a homeomorphism $\psi_{w} : U_{w} \rightarrow D^{2}_{+}$, which comply with definition~\ref{ozn_f_2}, a set $\hat{Q}_{w} = U_{w} \cap \Gamma_c$ and a mapping $\Psi_{w} = \hat{\Theta}^{-1} \circ \psi_{w} : \hat{Q}_{w} \rightarrow [0, 1)$ define a map of the space $\Gamma_c$ in the point $w$.

Obviously the set $\Gamma_c$ with the topology induced from $D$ is a Hausdorff space with a countable base. Moreover, every point of this set has a neighbourhood in $\Gamma_c$ which is homeomorphic to the interval $(0, 1)$ or to the halfinterval $[0, 1)$. Hence $\Gamma_c$ is the compact (it is the closed subset of compact $D$) connected one-dimensional manifold with or without boundary. Therefore the space $\Gamma_c$ is homeomorphic either to the circle $S^{1}$ or to the segment $I$.

Assume that $\Gamma_c \cong S^1$. Let $R \subset D$ is a closed domain with the boundary $\Gamma_c$. All points of $\Int{R}$ are regular points of $f$. From definition~\ref{ozn_f_1} it follows that a regular point cannot be a point of local extremum of $f$. Thus $f \not\equiv const$ on $R$, otherwise every point from $\Int{R}$ should be a point of local extremum of $f$.

$R$ is the compact set, so the continuous function $f$ should rich its maximal and minimal values on $R$. Let $f(v') = \min_{z \in R} f(z)$, $f(v'') = \max_{z \in R} f(z)$ for certain $v'$, $v'' \in R$. We have allready proved that $f(v') \neq f(v'')$, therefore one of these two numbers is distinct from $c = f(\Gamma_c)$ and one of the points $v'$, $v''$ is contained in $\Int{R}$, hence it is the point of local extremum of $f$. Then it cannot be a regular point of $f$.

From the received contradiction we conclude that $\Gamma_c \cong I$, with a pair of points $\{z_{0}(c), z_{1}(c)\} \in \bigcup_{k=1}^n \mathring{\gamma}_{2k-1}$ corresponding to the boundary of the segment and the rest points of $\Gamma_c$ are contained in $\Int{D}$. By definition the function $f$ is strictly monotone on each arc $\mathring{\gamma}_{2k-1}$, $k \in \{1, \ldots, n\}$, therefore $z_0(c) \in \mathring{\gamma}_{2i-1}$, $z_1(c) \in \mathring{\gamma}_{2j-1}$, $i, j \in \{1, \ldots, n\}$ and $i \neq j$.
\end{proof}

\begin{remk}\label{remk_finite_num_of_components}
From condition 2 of Definition~\ref{ozn_weak_regular} and from Lemma~\ref{lem_prop_of_weak_reg} it is seen that every level set of a weakly regular function $f$ has a finite number of connected components in $D$.
\end{remk}

\begin{lem}\label{lem_number_of_arks}
Let $f$ is a weakly regular function on $D$. Then $\mathcal{N}(f) = 2$.
\end{lem}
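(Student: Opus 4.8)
The plan is to read off from $\partial D$ the combinatorial data carried by $f$, to record a parity constraint forced by Lemma~\ref{lem_prop_of_weak_reg}, and then to use the absence of interior singularities together with the connectedness of $D$ to make $n\geq 3$ impossible. For the setup, put $c_k:=f(\gamma_{2k})$ for $k\in\{1,\dots,n\}$ (cyclic indices). Strict monotonicity of $f$ on each $\gamma_{2k-1}$ gives $f(z_{2k-1})=c_{k-1}$ and $f(z_{2k})=c_k$, and since $\mathring{\gamma}_{2k-1}\neq\emptyset$ these cyclically consecutive values are distinct. By Definitions~\ref{ozn_f_1} and~\ref{ozn_f_2} neither a regular point nor a regular boundary point of $f$ is a point of local extremum (for a regular boundary point, in a canonical neighbourhood the arc $\{0\}\times[0,1)$ divides $D_{+}^{2}$ into two halves on which $f-f(z_0)$ keeps opposite constant signs), and by Proposition~\ref{prop_w_r_correctness} every boundary point off $\bigcup_k\gamma_{2k}$ is a regular boundary point; hence $\max_D f$ and $\min_D f$ are attained on $\bigcup_k\gamma_{2k}$, so $M:=\max_D f=\max_k c_k$, $m:=\min_D f=\min_k c_k$, and $m<M$ since $f$ is not constant.

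The key tool is a counting identity distilled from Lemma~\ref{lem_prop_of_weak_reg}. For a value $c$ taken by $f$ put $N(c):=\#\{k\ :\ c\text{ lies strictly between }c_{k-1}\text{ and }c_k\}$; by strict monotonicity of $f$ on the arcs $\gamma_{2k-1}$ this is exactly the number of points of $f^{-1}(c)\cap\bigcup_k\mathring{\gamma}_{2k-1}$. Every connected component of $f^{-1}(c)$ meeting $\Int D$ is, by Lemma~\ref{lem_prop_of_weak_reg}, a simple arc with its two endpoints on two \emph{distinct} open arcs $\mathring{\gamma}_{2i-1},\mathring{\gamma}_{2j-1}$, while the remaining components are points $z_{2k}$ (where $f=c_k$) or whole constant arcs $\gamma_{2k}$. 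Hence for $c\notin\{c_1,\dots,c_n\}$ the points of $f^{-1}(c)$ on $\bigcup_k\mathring{\gamma}_{2k-1}$ are matched in pairs, so $N(c)$ is always even, and for $t\in(m,M)\setminus\{c_1,\dots,c_n\}$ the set $f^{-1}(t)$ is nonempty (so $N(t)\geq 2$) and has exactly $\tfrac12 N(t)$ connected components.

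To finish I would argue that, as $t$ runs over $(m,M)$, the number of components of $f^{-1}(t)$ changes only at the values $c_k$, and that two distinct components never merge: a merging would create a saddle point of $f$, which is impossible at an interior point (all interior points are regular) and --- using only that each $\gamma_{2k}$ is a whole connected component of its level set and that $f$ is strictly monotone on the neighbouring arcs --- also impossible on $\partial D$ (at a constant arc or a corner a component can only cap off, be born, or have one endpoint slide across the arc). Collapsing each connected component of each level set of $f$ to a point then yields a connected finite one-dimensional complex all of whose vertices have valence one, i.e. a single segment; so the cyclic sequence $c_1,\dots,c_n$ is unimodal, with $M$ its only local maximum and $m$ its only local minimum, each attained once. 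Now suppose $n\geq 3$: then some value is strictly between $m$ and $M$; take $v$ the smallest such, attained at the index right after $m$ on the increasing run, or right before it on the decreasing run, or at both. If $v$ is attained at just one index (then not a local extremum of the sequence), inspection of the pairs $(c_{k-1},c_k)$ shows $v$ lies strictly between the endpoints of exactly one pair, so $N(v)=1$, contradicting that $N(v)$ is even. If $v$ is attained at two indices (one on each run), then $f^{-1}(v)$ is a union of two disjoint constant arcs at level $v$ with no component in $\Int D$, so $N(v)=0$ and $f^{-1}(t)$ has a single component for $t$ near $v$; but the two endpoints of that component converge as $t\to v^{+}$ to points of the two distinct arcs, so a Hausdorff limit of these connected arcs would be a connected subset of $f^{-1}(v)$ meeting two disjoint arcs --- impossible. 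Hence $n=2$.

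The step I expect to be the real obstacle is the no-merging claim, i.e. that the level-set component count of $f$ behaves as for a function with no interior critical values, so that the quotient is a segment. Since Definition~\ref{ozn_weak_regular} prescribes no local normal form at points of the constant arcs $\gamma_{2k}$ or at the corners $z_k$, excluding a ``boundary saddle'' there requires a careful local analysis at those points, resting solely on the facts that $\gamma_{2k}$ is a full connected component of its level set and that $f$ is strictly monotone on the two adjacent arcs. Granting that, the unimodality and the final case analysis (including the routine compactness argument in the last case) go through as above.
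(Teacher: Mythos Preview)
Your approach differs from the paper's, and the spot where you hesitate is exactly where the argument is incomplete.

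The paper does not pass to a Reeb quotient. It defines a map $\tau:\gamma_1\to\Fr D$ sending $z\in\mathring\gamma_1$ to the other endpoint of the level arc through $z$ (and $z_1\mapsto z_{2n}$, $z_2\mapsto z_3$), and proves that $\tau$ is continuous; the tool is Proposition~\ref{prop_func_on_compact} (preimages under $f$ of small intervals form a base of neighbourhoods of a level set), which lets one trap the whole level arc through a nearby $z'\in\gamma_1$ in a prescribed neighbourhood and hence pin down its second endpoint. Once $\tau$ is continuous, $\tau(\gamma_1)$ is a connected subset of $\Fr D$ containing $z_3$ and $z_{2n}$, disjoint from $\mathring\gamma_1$, and (for $n\ge3$) disjoint from $\gamma_4$; since $\mathring\gamma_1$ and $\gamma_4$ lie in different components of $\Fr D\setminus\{z_3,z_{2n}\}$, this is impossible.

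Your parity observation and your final case split are correct. One small slip: you assert $N(c)$ even only for $c\notin\{c_1,\dots,c_n\}$, but in Case~1 you apply it to $v=c_j$; this is harmless, since your own reading of Lemma~\ref{lem_prop_of_weak_reg} (interior components have both endpoints in the \emph{open} arcs $\mathring\gamma_{2i-1}$, boundary components contribute nothing) already gives $N(c)$ even for every $c$.

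The genuine gap is the one you flag yourself: the no-merging claim at the constant arcs $\gamma_{2k}$ and corners, equivalently that the Reeb quotient is a $1$--manifold there, equivalently that the cyclic sequence $(c_k)$ is unimodal. Definition~\ref{ozn_weak_regular} provides no local model at those points, and ``a component can only cap off, be born, or slide'' is an assertion, not an argument. Concretely: to see that only one component of $f^{-1}(t)$ approaches $\gamma_{2k}$ as $t\to c_k$ from the $c_{k-1}$-side, you must show that the second endpoint of the level arc through the unique point of $\mathring\gamma_{2k-1}$ at height $t$ tends to $z_{2k+1}$; but that is precisely continuity of the endpoint map at $z_{2k}$, which is the substantive step the paper carries out. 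So the piece you leave open is equivalent to the heart of the paper's proof, and without it your outline is not yet an alternative argument.
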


In order to prove this Lemma we need one simple proposition.

\begin{prop}\label{prop_func_on_compact}
Let $g : K \rightarrow \rr$ is a continuous function on a compact $K$. Then for every $c \in g(K)$ and for a basis $\{U_i\}$ of neighbourhood of $c$ a family of sets $\{W_i = g^{-1}(U_i)\}$ forms the base of neighbourhoods of the level set $g^{-1}(c)$.
\end{prop}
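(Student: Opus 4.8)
The plan is to verify the two defining properties of a neighbourhood base of the set $g^{-1}(c)$ directly. Let me organize the argument around the definition: the family $\{W_i\}$ is a base of neighbourhoods of $g^{-1}(c)$ if (a) each $W_i$ is a neighbourhood of $g^{-1}(c)$, i.e. contains an open set containing $g^{-1}(c)$, and (b) every open set $V \supseteq g^{-1}(c)$ contains some $W_i$.

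For property (a): since $\{U_i\}$ is a base of neighbourhoods of the point $c$ in $\rr$, each $U_i$ contains an open set $O_i$ with $c \in O_i \subseteq U_i$. Then $g^{-1}(O_i)$ is open in $K$ by continuity of $g$, and $g^{-1}(c) \subseteq g^{-1}(O_i) \subseteq g^{-1}(U_i) = W_i$. Hence $W_i$ is a neighbourhood of $g^{-1}(c)$. (Note each $W_i$ is in fact nonempty, since $c \in g(K)$ guarantees $g^{-1}(c) \neq \emptyset$.)

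For property (b), which is where compactness enters and which I expect to be the only nonroutine point: let $V$ be any open subset of $K$ with $g^{-1}(c) \subseteq V$. The complement $K \setminus V$ is closed in the compact $K$, hence compact, and $g(K \setminus V)$ is therefore a compact subset of $\rr$ not containing $c$ (because every point of $g^{-1}(c)$ lies in $V$). Thus $c$ has a neighbourhood in $\rr$ disjoint from the closed set $g(K \setminus V)$; since $\{U_i\}$ is a base of neighbourhoods of $c$, we may pick $U_i$ with $U_i \cap g(K \setminus V) = \emptyset$. Then $W_i = g^{-1}(U_i)$ meets no point of $K \setminus V$, i.e. $W_i \subseteq V$. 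This establishes (b) and completes the proof.

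The main obstacle is simply making sure compactness is used correctly: without it, $g(K \setminus V)$ need not be closed, and one could fail to separate $c$ from it — for instance the image of a closed non-compact set can accumulate at $c$. Everything else is a routine unwinding of the definition of a neighbourhood base together with continuity of $g$.
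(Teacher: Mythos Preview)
Your proof is correct. It differs from the paper's argument in the key step (b): the paper reduces to a countable nested base $\{\hat U_i\}$, assumes by contradiction that no $\hat W_i$ is contained in a given neighbourhood $W$, picks points $x_i\in\hat W_i\setminus W$, and uses sequential compactness of $K$ to produce a limit point $x$ which must satisfy $g(x)=c$ (hence $x\in W$) yet also lie outside $W$. Your route is more direct: you observe that $g(K\setminus V)$ is compact, hence closed in $\rr$, and does not contain $c$, so some $U_i$ avoids it. This avoids the reduction to a countable nested base and the sequential argument altogether, and in fact works verbatim with $\rr$ replaced by any Hausdorff space; the paper's version, by contrast, leans on first countability of $\rr$ and sequential compactness.
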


\begin{proof}[Proof of Proposition~\ref{prop_func_on_compact}]
Evidently, it is sufficient to prove that there exists at least one base of neighbourhoods of $c \in g(K)$ full preimages of elements from which form a base of neighbourhoods of the level set $g^{-1}(c)$.

The space $\rr$ complies with the first axiom of countability, so we can assume that the family $\{U_i\}$ is countable.

There exists a countable base $\{\hat{U}_i\}_{i \in \nn}$ of neighbourhoods of $c$ such that
\begin{equation}\label{eq_subsets}
\hat{U}_{i+1} \subseteq \hat{U}_i \,, \quad i \in \nn \,.
\end{equation}
Really, direct verification shows that the family of sets $\hat{U}_{i} = \bigcap_{m=1}^i U_m$, $m \in \nn$ satisfies to our condition.

Suppose that a sequence of sets $\{\hat{W}_i = g^{-1}(\hat{U}_i)\}$ does not form a base of neighbourhoods of $g^{-1}(c)$. Then there exists such a neighbourhood $W$ of this set that the inequality $\hat{W}_i \setminus W \neq \emptyset$ is fulfilled for every $i \in \nn$. We fix $x_i \in \hat{W}_i \setminus W$, $i \in \nn$. From the compactness of $K$ it follows that the sequence $\{x_i\}_{i \in \nn}$ has a convergent subsequence $\{x_{i_j}\}_{j \in \nn}$. Suppose that $x$ is its limit. Relation~\eqref{eq_subsets} assures us that the family of sets $\{\hat{U}_{i_j}\}_{j \in \nn}$ forms the base of neighbourhoods of $c$. Therefore, without loss of generality we can assume that $x = \lim_{i \rightarrow \infty} x_i$.

On one hand the family $\{\hat{U}_i\}$ is the base of neighbourhoods of $c$ and $g(x_n) \in \hat{U}_n$ for every $n \in \nn$. Then it follows from the relation~\eqref{eq_subsets} that also $g(x_k) \in \hat{U}_n$ for every $k > n$, $n \in \nn$. From this and from the continuity of $g$ we get the following equalities $g(x) = \lim_{i \rightarrow \infty} g(x_i) = c$. Hence  $x \in g^{-1}(c) \subset W$.

On the other hand $x \in \Cl{\{x_i \,|\, i \in \nn\}}$ and $\{x_i \,|\, i \in \nn\} \cap W = \emptyset$ by the construction. Therefore the inclusion $x \notin W$ have to be fulfilled.

The contradiction obtained proves proposition.
\end{proof}

\begin{proof}[Proof of lemma~\ref{lem_number_of_arks}]
Let us define for the arc $\gamma_1$ a mapping $\tau : \gamma_1 \rightarrow \Fr{D}$ in the following way. Let $z \in \mathring{\gamma}_1$ and $\Gamma_z \subseteq f^{-1}(f(z))$ is a connected component of a level set of $f$ which contains $z$. We know (see.~Lemma~\ref{lem_prop_of_weak_reg}) that $\Gamma_z$ is a support of a simple continuous curve $\gamma_z : I \rightarrow D$ and that $z$ is one of the endpoints of that curve. Let for example $z = \gamma_z(0)$. We associate to $z$ another endpoint of the curve $\gamma_z$:
\[
\tau(z) = \gamma_z(1) \,, \quad z \in \mathring{\gamma}_1 \,.
\]
Furthermore we set $\tau(z_1) = z_{2n}$, $\tau(z_2) = z_3$.

Let us check that the mapping $\tau$ is continuous on $\gamma_1$.

Suppose first that $z \in \mathring{\gamma}_1$. We designate $c = f(z)$. We know that the level set $f^{-1}(c)$ of $f$ has a finite number of connected components (see Remark~\ref{remk_finite_num_of_components}). Let this number is equal to $l \in \nn$. We fix disjoint open neighbourhoods $W_1, \ldots, W_l$ of these components. Suppose $\Gamma_z \subset W_1$.

It follows from Lemma~\ref{lem_prop_of_weak_reg} and from the condition 3 of Definition~\ref{ozn_weak_regular} that $\tau(z) \in \mathring{\gamma}_{2k-1}$ for some $k \in \{2, \ldots, n\}$. Let $V'$ is an open neighbourhood of $\tau(z)$ in $D$. Without loss of generality we can regard that $V' \cap \Fr{D} \subseteq \gamma_{2k-1} \cup \gamma_1$. Let us also take an open neighbourhood $V$ of $z$ in $D$ such that $V \cap \Fr{D} \subseteq \gamma_1 \cup \gamma_{2k-1}$ and $V \cap \gamma_{2k-1} \subset V'$.

We fix an open neighbourhood $\hat{W}$ of the set $\Gamma_z$ such that $\hat{W} \cap \Fr{D} \subseteq V \cup V'$. For example we can take $\hat{W} = V \cup V' \cup \Int{D}$, where $\Int{D} = D \setminus \Fr{D}$. Designate $W = \hat{W} \cap W_1$. Evidently, inclusions $W \cap \Fr{D} \subseteq V \cup V'$ are valid, moreover $W \cap \gamma_{2k-1} \subset V'$ by the construction.

It follows from Proposition~\ref{prop_func_on_compact} that there exists such $\delta > 0$ for the open neighbourhood $O = W \cup \bigcup_{i=2}^l W_i$ of the level set $f^{-1}(c)$ of $f$ that $Q = f^{-1}(B_{\delta}(c)) \subseteq O$. Here we designate $B_{\delta}(c) = \{t \in \rr \,|\, |t-c| < \delta\}$.

Denote $Q_0 = Q \cap W$, $V_0 = V \cap Q_0$. It is evident that $z \in V_0$ and $Q_0 \cap \Fr{D} \subseteq V_0 \cup V'$.

Let $z' \in \gamma_1 \cap V_0$. Sign by $\Gamma_{z'}$ a connected component of a level set of $f$ which contains $z'$. Let $\gamma_{z'} : I \rightarrow D$ is a simple continuous curve with the support $\Gamma_{z'}$ such that $\gamma_{z'}(0) = z'$ and $\gamma_{z'}(1) = \tau(z')$. Observe that $\Gamma_{z'} \subset Q \subseteq O$, moreover $\Gamma_{z'} \cap Q_0 \neq \emptyset$ and the set $\Gamma_{z'}$ is connected. Open sets $Q_0$ and $Q \cap \bigcup_{i=2}^l W_i$ are disjoint by the construction, so $\Gamma_{z'} \cap \bigcup_{i=2}^l W_i = \emptyset$ and $\Gamma_{z'} \subset Q_0$.

It is easy to see that $\{\gamma_{z'}(0), \gamma_{z'}(1)\} \subset (V_0 \cup V') \cap \Fr{D} \subseteq \gamma_1 \cup \gamma_{2k-1}$, and also $\gamma_{z'}(0) \in \gamma_1$. It is evident that $f \circ \gamma_{z'}(0) = f \circ \gamma_{z'}(1) = f(z')$, hence $\gamma_{z'}(1) \in \gamma_{2k-1}$ (see. Lemma~\ref{lem_prop_of_weak_reg}). But $Q_0 \cap \gamma_{2k-1} \subset W \cap \gamma_{2k-1} \subset V'$, therefore $\tau(z') = \gamma_{z'}(1) \in V'$ and $V_0 \cap \gamma_1 \subseteq \tau^{-1}(V')$.

From arbitrariness in the choice of $z \in \mathring{\gamma}_1$ and of its neighbourhood $V'$ it follows that the mapping $\tau$ is continuous on the set $\mathring{\gamma}_1$.

Suppose now that $z = z_1$ or $z_2$. In the case when $z_1 = z_{2n}$ (respectively $z_2 = z_3$) our previous argument remain true without any changes.

If the arc $\gamma_{2n}$ (respectively $\gamma_2$) does not reduce to a single point then the continuity of $\tau$ in the point $z$ is checked with the help of argument that are analogous to what was stated above. The only essential change is that open sets $V'$ and $V$ should be selected to satisfy correlations $(V' \cup V) \cap \Fr{D} \subseteq \gamma_1 \cup \gamma_{2k-1} \cup \gamma_{2n}$ and $V \cap \gamma_{2k-1} \subset V'$ (respectively $(V' \cup V) \cap \Fr{D} \subseteq \gamma_1 \cup \gamma_{2k-1} \cup \gamma_2$ and $V \cap \gamma_{2k-1} \subset V'$). Also a neighbourhood of the set $\Gamma_z = \gamma_{2n}$ (respectively of $\Gamma_z = \gamma_2$) should be chosen to comply with the inclusion $\hat{W} \cap \Fr{D} \subseteq V \cup V' \cup \Gamma_z$. For example $\hat{W} = V \cup V' \cup \Int{D} \cup \Gamma_z$ will fit.


So, the mapping $\tau : \gamma_1 \rightarrow \Fr{D}$ is continuous. Let us explore some its properties.

The set $\tau(\gamma_1)$ is connected (it is an image of the connected set under continuous mapping) and contains points $z_{2n}$ and $z_3$. Therefore, it should contain one of the arcs of the circle $\Fr{D}$ which connect these points.

Each point of the set $\tau(\gamma_1)$ except $z_{2n}$ and $z_3$ belongs to $\bigcup_{k=2}^n \mathring{\gamma}_{2k-1}$. Really, as we have observed above if $z \in \mathring{\gamma}_1$, then $\tau(z) \in \mathring{\gamma}_{2k-1}$ for a certain $k \neq 1$ (see. Lemma~\ref{lem_prop_of_weak_reg}).

By definition $\mathring{\gamma}_i \cap \gamma_j = \emptyset$ when $i \neq j$, therefore
\begin{equation}\label{eq_cond_1}
\tau(\gamma_1) \cap \mathring{\gamma}_1 = \emptyset \,.
\end{equation}

If $n \geq 3$, then
\begin{equation}\label{eq_cond_2}
\gamma_4 \cap \tau(\gamma_1) = \emptyset \,.
\end{equation}
This is the consequence of a simple observation that $\{z_3, z_{2n}\} \cap \gamma_4 = \emptyset$ when $n \geq 3$ (see. condition 2 of Definition~\ref{ozn_weak_regular}) together with the relation $\tau(\gamma_1) \subseteq \{z_3, z_{2n}\} \cup \bigcup_{k=2}^n \mathring{\gamma}_{2k-1}$.

To complete the proof of lemma it remains to notice that if $n \geq 3$ then the nonempty sets $\mathring{\gamma}_1$ and $\gamma_4$ are contained in different connected components of $\Fr{D} \setminus \{z_3, z_{2n}\}$ and relations~\eqref{eq_cond_1} and~\eqref{eq_cond_2} could not hold at the same time, otherwise points $z_3$ and $z_{2n}$ would belong to different connected components of the set $\tau(\gamma_1)$.

Thus, $n = \mathcal{N}(f) = 2$.
\end{proof}

\begin{ozn}\label{ozn_almost_weak_regular}
Let for some $n \geq 2$ and for a sequence of points $z_1, \ldots, z_{2n} \in \Fr{D}$ a function $f$ complies with all conditions of Definition~\ref{ozn_weak_regular} except condition 3, instead of which the following condition is valid
\begin{itemize}
	\item[$3')$] for $j = 2k$, $k \in \{1, \ldots, n\}$ the arc $\gamma_j$ belongs to a level set of $f$.
\end{itemize}
We shall call such a function \emph{almost weakly regular on $D$}.
\end{ozn}

Let $f$ is a weakly regular function on $D$. We denote by $2\cdot\mathcal{N}(f)$ the minimal number of points and arcs which satisfy to Definition~\ref{ozn_almost_weak_regular}. Obviously, this number is well defined and depends only on $f$.

\begin{prop}\label{prop_min_num_of_arcs}
Suppose that for a certain $n \geq 2$ and a sequence of points $z_1, \ldots, z_{2n} \in \Fr{D}$ function $f$ complies with conditions of Definition~\ref{ozn_almost_weak_regular}. If $n = \mathcal{N}(f)$, then a family of sets $\{\mathring{\gamma}_{2k-1}\}_{k=1}^n$ coincides with the family of connected components of the set of regular boundary points of $f$.
\end{prop}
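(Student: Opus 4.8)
Write $S$ for the set of regular boundary points of $f$. By Proposition~\ref{prop_w_r_correctness} the set $S$ has exactly $\mathcal{N}(f)$ connected components, so under the hypothesis $n=\mathcal{N}(f)$ it has exactly $n$ of them. The plan is to prove that $S=\bigcup_{k=1}^{n}\mathring{\gamma}_{2k-1}$ and that this union is the decomposition of $S$ into its connected components. The inclusion $\bigcup_{k=1}^{n}\mathring{\gamma}_{2k-1}\subseteq S$ is immediate: condition~2 of Definition~\ref{ozn_weak_regular}, which is retained in Definition~\ref{ozn_almost_weak_regular}, says that each point of $\mathring{\gamma}_{2k-1}$ is a regular boundary point, and each $\mathring{\gamma}_{2k-1}$ is a nonempty connected sub-arc of $\Fr{D}$. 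Hence the real content is the opposite inclusion, i.e.\ that the arcs $\gamma_{2k}$ carry no regular boundary point — and the hypothesis $n=\mathcal{N}(f)$ will be used precisely to exclude the one configuration in which they could.

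The first key step is the following local rigidity fact: if $w\in\gamma_{2k}$ is a regular boundary point of $f$, then $\gamma_{2k}$ collapses to the single point $w=z_{2k}=z_{2k+1}$. To prove it I would fix a canonical neighbourhood $U$ of $w$ and a homeomorphism $\psi:U\to D_{+}^{2}$ as in Definition~\ref{ozn_f_2}. Since by condition~$3'$ the arc $\gamma_{2k}$ lies in the level set $f^{-1}(f(w))$ and also $\gamma_{2k}\subseteq\Fr{D}$,
\[
\psi(\gamma_{2k}\cap U)\subseteq\psi\bigl(U\cap f^{-1}(f(w))\bigr)\cap\psi\bigl(U\cap\Fr{D}\bigr)=\bigl(\{0\}\times[0,1)\bigr)\cap\bigl((-1,1)\times\{0\}\bigr)=\{0\},
\]
so $\gamma_{2k}\cap U=\{w\}$. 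As $U\cap\Fr{D}$ is a neighbourhood of $w$ in $\Fr{D}$, this is impossible when $w$ is an interior point of the arc $\gamma_{2k}$ or an endpoint of a non-degenerate $\gamma_{2k}$, because then $U$ would meet $\gamma_{2k}$ in more than one point. Hence $\gamma_{2k}=\{w\}$ and $w=z_{2k}=z_{2k+1}$.

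Next comes the counting step. Combining the easy inclusion with the local fact, every point of $S$ that does not lie in some $\mathring{\gamma}_{2j-1}$ is a vertex $v$ with $\gamma_{2k}=\{v\}$ for an appropriate $k$; such a $v$ is the common endpoint of the non-degenerate arcs $\gamma_{2k-1}$ and $\gamma_{2k+1}$ (indices taken mod $2n$), so $v\in\Cl{\mathring{\gamma}_{2k-1}}\cap\Cl{\mathring{\gamma}_{2k+1}}$ and therefore $v$ lies in the same connected component of $S$ as both $\mathring{\gamma}_{2k-1}$ and $\mathring{\gamma}_{2k+1}$. Consequently the number of components of $S$ equals the number of classes of the partition of $\{1,\dots,n\}$ obtained by identifying $k$ with $k+1$ (cyclically) whenever $\gamma_{2k}\cap S\neq\emptyset$; and if at least one such $k$ occurred this number would be strictly smaller than $n$ (one identification already lowers it, and even identifying all consecutive indices leaves $1<n$ classes, as $n\geq2$), contradicting that $S$ has $n$ components. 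Hence $\gamma_{2k}\cap S=\emptyset$ for every $k$. Since $\Fr{D}=\bigcup_k\gamma_{2k-1}\cup\bigcup_k\gamma_{2k}$ and the endpoints $z_{2k-1},z_{2k}$ of $\gamma_{2k-1}$ lie on arcs with even index, it follows that $S=\bigcup_{k=1}^{n}\mathring{\gamma}_{2k-1}$, a disjoint union (recall $\mathring{\gamma}_i\cap\gamma_j=\emptyset$ for $i\neq j$) of $n$ nonempty connected sets. Each $\mathring{\gamma}_{2k-1}$ is contained in a single component of $S$; the resulting map from the $n$ arcs onto the $n$ components is therefore a bijection, so each component coincides with one of the arcs $\mathring{\gamma}_{2k-1}$, which is the assertion.

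I expect the second step — the local rigidity statement forcing $\gamma_{2k}$ to collapse to a point — to be the main obstacle, together with stating the component-counting bookkeeping cleanly; by contrast the first and last steps are essentially formal. The one genuinely delicate point that must not be glossed over is the degenerate-vertex case, since that is exactly the configuration that the hypothesis $n=\mathcal{N}(f)$ is in place to rule out.
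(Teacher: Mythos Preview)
Your proof has a gap at the very first step. You invoke Proposition~\ref{prop_w_r_correctness} to conclude that the set $S$ of regular boundary points has exactly $\mathcal{N}(f)$ connected components, but that proposition is stated and proved only for \emph{weakly} regular functions (Definition~\ref{ozn_weak_regular}), whereas here $f$ is merely \emph{almost} weakly regular (Definition~\ref{ozn_almost_weak_regular}). The proof of Proposition~\ref{prop_w_r_correctness} relies on condition~3 of Definition~\ref{ozn_weak_regular} --- that each $\gamma_{2k}$ is a full connected component of a level set --- and that is precisely the condition that has been weakened to~$3'$ here. In fact, for almost weakly regular $f$ the quantity $\mathcal{N}(f)$ is \emph{defined} as the minimal $n$ admitting a decomposition as in Definition~\ref{ozn_almost_weak_regular}, not as the number of components of $S$; the assertion ``$S$ has exactly $\mathcal{N}(f)$ components'' is a \emph{consequence} of the proposition you are proving, not an available premise. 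Your counting step (``if at least one such $k$ occurred the number of components would be strictly smaller than $n$, contradicting that $S$ has $n$ components'') therefore has no force.

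The repair is to use the hypothesis $n=\mathcal{N}(f)$ for what it actually is in this setting: a \emph{minimality} statement. Your local-rigidity step is correct and coincides with the paper's: if $\gamma_{2k}$ meets $S$ then $\gamma_{2k}$ collapses to the single point $z_{2k}=z_{2k+1}\in S$. At that point, instead of counting components of $S$, observe (as the paper does) that one may discard $z_{2k},z_{2k+1}$ and merge $\gamma_{2k-1},\gamma_{2k},\gamma_{2k+1}$ into a single arc whose interior $\mathring{\gamma}_{2k-1}\cup\{z_{2k}\}\cup\mathring{\gamma}_{2k+1}$ lies entirely in $S$; the resulting sequence of $2(n-1)$ points still satisfies Definition~\ref{ozn_almost_weak_regular}, contradicting the minimality of $n$. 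Once you swap the circular component-count for this direct appeal to minimality, the remaining formal steps of your argument go through unchanged.
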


\begin{proof}
Let us designate a set of regular boundary points of $f$ by $R$. The set $R$ is open in the space  $\Fr{D}$ by definition, therefore its connected components are open arcs of the circle $\Fr{D}$.

Let us check that $R \cap \bigcup_{k=1}^n \mathring{\gamma}_{2k} = \emptyset$.

Really, for an arbitrary point $z \in \mathring{\gamma}_{2k}$ there exists its open neighbourhood small enough to comply with the inequality $U(z) \cap \Fr{D} \subseteq \mathring{\gamma}_{2k}$, hence from the condition $3'$ of Definition~\ref{ozn_almost_weak_regular} it follows that $U(z) \cap \Fr{D} \subseteq f^{-1}(z)$ and a canonical neighbourhood $V(z) \subseteq U(z)$ of $z$ in the sense of Definition~\ref{ozn_f_2} can not exist (see also Remark~\ref{remk_kanon_neighb}).

Let us verify that if $\gamma_{2k} \cap R \neq \emptyset$ for some $k \in \{1, \ldots, n\}$ then $\mathring{\gamma}_{2k} = \emptyset$ and $\gamma_{2k} = \{z_{2k}\}$.

Let $\gamma_{2k} \cap R \neq \emptyset$. Then $\gamma_{2k} \cap R \subseteq \{z_{2k}, z_{m}\} = \gamma_{2k} \setminus \mathring{\gamma}_{2k}$, where $m \equiv 2k+1 \pmod{2n}$. But it is easy to see that if $\mathring{\gamma}_{2k} \neq \emptyset$ then for an arbitrary neighbourhood $U$ of $z_{2k}$ in the space $D$ an intersection $U \cap \mathring{\gamma}_{2k}$ is not empty and contains some point $z' \neq z_{2k}$. Therefore $\{z_{2k}, z'\} \subset f^{-1}(f(z)) \cap U \cap \Fr{D}$ and $U$ can not be a canonical neighbourhood of $z_{2k}$ in the sense of Definition~\ref{ozn_f_2}. Similar is also true for $z_m$. Consequently, if $\mathring{\gamma}_{2k} \neq \emptyset$ then $\{z_{2k}, z_{m}\} \cap R = \emptyset$ and $\gamma_{2k} \cap R = \emptyset$.

Let $n = \mathcal{N}(f)$.

Let us check that $R \cap \bigcup_{k=1}^n \gamma_{2k} = \emptyset$.

Really, if $\gamma_{2k} \cap R \neq \emptyset$ for some $k \in \{1, \ldots, n\}$ then $z_{2k} = z_m$, $m \equiv 2k+1 \pmod{2n}$ and $\gamma_{2k} = \{z_{2k}\} \subset R$. Then the open arc $\mathring{\gamma}_{2k-1} \cup \gamma_{2k} \cup \mathring{\gamma}_m$ is contained in $R$ so we can throw off the points $z_{2k}$, $z_m$ and replace three consequent arcs $\gamma_{2k-1}$, $\gamma_{2k}$, $\gamma_m$, $m \equiv 2k+1 \pmod{2n}$ by the arc $\gamma_{2k-1} \cup \gamma_{2k} \cup \gamma_m$ in order to reduce the quantity of points and corresponding arcs in the collection $\{z_1, \ldots, z_{2n}\}$. But it is impossible since the quantity of points $2n$ is already minimal.

It is obvious that $\Fr{D} = \bigcup_{k=1}^n \gamma_{2k} \cup \bigcup_{k=1}^n \mathring{\gamma}_{2k+1}$ and $\bigcup_{k=1}^n \mathring{\gamma}_{2k+1} \subseteq R$, therefore
\[
R = \bigcup_{k=1}^n \mathring{\gamma}_{2k+1}
\]
and the family $\{\mathring{\gamma}_{2k-1}\}_{k=1}^n$ of disjoint nonempty connected sets which are open in $\Fr{D}$ coincides with the family of connected components of the set $R$ of regular boundary points of $f$.
\end{proof}

\begin{lem}\label{lem_weak_regular_2}
If $f : D \rightarrow \rr$ is almost weakly regular on $D$ and $\mathcal{N}(f) = 2$, then $f$ is weakly regular on $D$.
\end{lem}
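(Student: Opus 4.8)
The plan is to prove the slightly stronger statement that the very collection of points $z_1,\dots,z_4$ (with $n=\mathcal N(f)=2$) exhibiting $f$ as almost weakly regular already exhibits it as weakly regular. Since conditions 1) and 2) of Definition~\ref{ozn_weak_regular} coincide verbatim with those of Definition~\ref{ozn_almost_weak_regular}, everything reduces to upgrading condition $3'$ to condition 3), i.e.\ to showing that $\gamma_2$ and $\gamma_4$ are not merely contained in level sets of $f$ but are \emph{whole} connected components of the level sets in which they lie.

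First I would read off the boundary values. Write $c_2$ for the value of $f$ on $\gamma_2$ and $c_4$ for its value on $\gamma_4$. Since $z_1,z_4\in\gamma_4$ and $z_2,z_3\in\gamma_2$, we have $f(z_1)=f(z_4)=c_4$ and $f(z_2)=f(z_3)=c_2$; because $f$ is strictly monotone on the arc $\gamma_1$, whose endpoints satisfy $z_1\neq z_2$ (as $\mathring\gamma_1\neq\emptyset$), it follows that $c_4\neq c_2$, and I may assume $c_4<c_2$, the other case being symmetric. Strict monotonicity of $f$ on $\gamma_1$ and on $\gamma_3$ then gives $f(\gamma_1)=f(\gamma_3)=[c_4,c_2]$, while $f\equiv c_2$ on $\gamma_2$ and $f\equiv c_4$ on $\gamma_4$; hence $f(\Fr D)=[c_4,c_2]$.

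The key step is to recognize $c_2$ and $c_4$ as the global extreme values of $f$ on $D$. The continuous function $f$ attains its maximum and minimum on the compact set $D$; by Definition~\ref{ozn_f_1} (cf.\ the remark used in the proof of Lemma~\ref{lem_prop_of_weak_reg}) a regular point of $f$ cannot be a point of local extremum, and every point of $\Int D$ is a regular point, so both extrema are attained on $\Fr D$. Therefore $\max_D f=c_2$ and $\min_D f=c_4$. It then remains to compute the two level sets. No point of $\Int D$ can lie in $f^{-1}(c_2)$, for it would be a global — hence local — maximum of $f$ at a regular point; so $f^{-1}(c_2)\subseteq\Fr D$. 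On $\Fr D=\gamma_1\cup\gamma_2\cup\gamma_3\cup\gamma_4$ the injectivity of $f$ on $\gamma_1$ and on $\gamma_3$ forces $f^{-1}(c_2)$ to meet $\gamma_1$ only in $z_2$ and $\gamma_3$ only in $z_3$, it is disjoint from $\gamma_4$ since $c_4\neq c_2$, and it contains all of $\gamma_2$; as $z_2,z_3\in\gamma_2$, this yields $f^{-1}(c_2)=\gamma_2$. Being a closed arc of $\Fr D$ (possibly a single point), $\gamma_2$ is connected, hence it is exactly one connected component of $f^{-1}(c_2)$; the symmetric argument gives $f^{-1}(c_4)=\gamma_4$. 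Thus condition 3) of Definition~\ref{ozn_weak_regular} holds and $f$ is weakly regular on $D$.

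I do not expect a genuine obstacle here: once the extreme-value observation of the third paragraph is in place, it rules out in one stroke any ``branch'' of $f^{-1}(c_2)$ escaping into $\Int D$ through the middle of $\gamma_2$, and the rest is routine. The only points demanding a little care are the inequality $c_2\neq c_4$ and the uniform treatment of the degenerate situations $z_2=z_3$ or $z_4=z_1$, in which $\gamma_2$ or $\gamma_4$ shrinks to a point — all of which the argument above already accommodates.
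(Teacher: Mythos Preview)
Your argument is correct and follows essentially the same route as the paper: use that interior points are regular, hence cannot be local extrema, to force the global extreme values $c_2,c_4$ to be attained only on $\Fr D$, then read off $f^{-1}(c_2)=\gamma_2$ and $f^{-1}(c_4)=\gamma_4$ from the boundary description. The paper's write-up is slightly terser (it passes directly from ``every point of $f^{-1}(c')\cup f^{-1}(c'')$ is a local extremum'' to $f^{-1}(c')\cup f^{-1}(c'')\subset\Fr D$), but the content is the same.
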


\begin{proof}
If $\mathcal{N}(f) = 2$, then the frontier $\Fr{D}$ of $D$ consists of four arcs $\gamma_1, \ldots, \gamma_4$, where arcs $\gamma_1$ and $\gamma_3$ are nondegenerate and $f$ is strictly monotone on them. On each of the arcs $\gamma_2$ and $\gamma_4$ function $f$ is constant and each of these arcs can degenerate into a point. Let $\gamma_2 \subseteq f^{-1}(c')$, $\gamma_4 \subseteq f^{-1}(c'')$.
From the strict monotony of $f$ on $\gamma_1$ we conclude that $c'' = f(z_1) = f(\gamma_4) \neq f(\gamma_2) = f(z_2) = c'$. Let $c' < c''$ for definiteness.

Every interior point of $D$ is regular, hence local extremum points of $f$ can be situated only on the frontier $\Fr{D}$. From what we said above it follows that $f(D) = [c', c'']$ and every point of the set $f^{-1}(c') \cup f^{-1}(c'')$ is a local extremum point of $f$ on $D$. Therefore $f^{-1}(c') \cup f^{-1}(c'') \subset \Fr{D}$. But $f^{-1}(c') \cap \Fr{D} = \gamma_2$ and $f^{-1}(c'') \cap \Fr{D} = \gamma_4$. Consequently $f^{-1}(c') = \gamma_2$, $f^{-1}(c'') = \gamma_4$ and $f$ is weakly regular on $D$.
\end{proof}

\begin{figure}
\centerline{\includegraphics{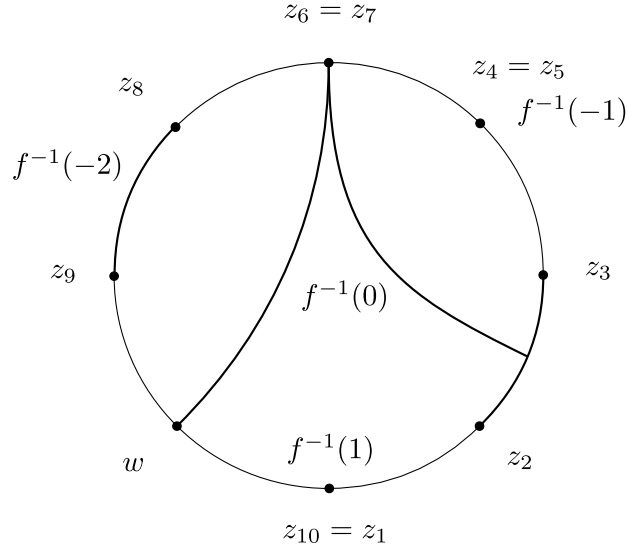}}
\caption{An almost weakly regular on $D$ function $f$ with $\mathcal{N}(f) = 5$. $w \in \gamma_9$ is a regular boundary point of $f$.}\label{Fig_N_gt_2}
\end{figure}

\begin{remk}
There exist almost weakly regular on $D$ functions with $\mathcal{N}(f) > 2$, see Figure~\ref{Fig_N_gt_2}.
\end{remk}

\section{On level sets of weakly regular functions on the square $I^2$.}

Let $W$ be a domain in the plane $\rr^{2}$, $f : \Cl{W} \rightarrow \rr$ be a continuous function.
\begin{ozn}\label{ozn_f_3}
A simple continuous curve $\gamma : [0, 1] \rightarrow \Cl{W}$ is called an \emph{$U$-trajectory} if $f \circ \gamma$ is strongly monotone on the segment $[0, 1]$.
\end{ozn}

We designate $I = [0, 1]$, $I^{2} = I \times I \subseteq \rr^{2}$, $\mathring{I}^{2} = \Int{I^{2}} = (0, 1) \times (0, 1)$.

Let us consider a continuous function $f : I^{2} \rightarrow \rr$ which complies with the following properties:
\begin{itemize}
	\item $f([0, 1] \times \{0\}) = 0$, $f([0, 1] \times \{1\}) = 1$;
	\item each point of the set $\mathring{I}^{2}$ is a regular point of $f$;
	\item every point of $\{0, 1\} \times (0, 1)$ is a regular boundary point of $f$;
	\item for any point of a dense subset $\Gamma$ of $(0, 1) \times \{0, 1\}$ there exists an $U$-trajectory which goes through this point.
\end{itemize}

\begin{prop}\label{prop_f_0}
Function $f$ is weakly regular on the square $I^2$.
\end{prop}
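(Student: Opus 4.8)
The plan is to verify Definition~\ref{ozn_weak_regular} directly, with $n=2$ and with $z_1,\dots,z_4$ the four vertices of the square. Fix the bypass direction of $\partial I^2$ so that these vertices occur in the order $z_1=(0,0)$, $z_2=(0,1)$, $z_3=(1,1)$, $z_4=(1,0)$; then $\gamma_1=\{0\}\times I$ (left side), $\gamma_2=I\times\{1\}$ (top side), $\gamma_3=\{1\}\times I$ (right side), $\gamma_4=I\times\{0\}$ (bottom side). Condition~1 of Definition~\ref{ozn_weak_regular} is precisely the second hypothesis on $f$. The arcs $\mathring\gamma_1=\{0\}\times(0,1)$ and $\mathring\gamma_3=\{1\}\times(0,1)$ are nonempty and, by the third hypothesis, consist of regular boundary points, which is most of Condition~2.

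First I would promote the local strict monotonicity guaranteed by Definition~\ref{ozn_f_2} to global strict monotonicity on $\gamma_1$ and $\gamma_3$. For each $z\in\mathring\gamma_1$ a canonical neighbourhood meets $\partial I^2$ in a subarc of $\gamma_1$ on which $f$ is strictly monotone, so $f|_{\mathring\gamma_1}$ is locally strictly monotone; the subsets of $\mathring\gamma_1$ on which the local behaviour is increasing, respectively decreasing, are open, disjoint and cover the connected interval $\mathring\gamma_1$, hence one is empty and $f|_{\mathring\gamma_1}$ is strictly monotone. Since $f(z_1)=0$ and $f(z_2)=1$ by the first hypothesis, continuity makes $f|_{\gamma_1}$ strictly monotone on the whole closed side; likewise on $\gamma_3$. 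This completes Condition~2.

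The substance of the proof is Condition~3, that $\gamma_2$ and $\gamma_4$ are connected components of level sets of $f$. The key remark is that neither a regular point (Definition~\ref{ozn_f_1}) nor a regular boundary point (Definition~\ref{ozn_f_2}) can be a point of local extremum of $f$ on $I^2$, since in a canonical neighbourhood the normal form of $f$ takes values both strictly above and strictly below its value at the point. By the first three hypotheses every point of $I^2\setminus(\gamma_2\cup\gamma_4)$ is of one of these two types, so all local extrema of $f$ lie on $\gamma_2\cup\gamma_4$, where $f\equiv 1$ and $f\equiv 0$ respectively. As $I^2$ is compact, $f$ attains its extreme values at local extrema; since it assumes the values $0$ and $1$, we get $\min_{I^2}f=0$ and $\max_{I^2}f=1$. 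Therefore $f^{-1}(1)$ consists of points of local maximum, so $f^{-1}(1)\subseteq\gamma_2\cup\gamma_4$, and as $f\equiv 0$ on $\gamma_4$ this forces $f^{-1}(1)=\gamma_2$; symmetrically $f^{-1}(0)=\gamma_4$. Hence each of $\gamma_2$, $\gamma_4$ is an entire level set of $f$ and, being connected, is a connected component of a level set. With $n=2$ all three conditions of Definition~\ref{ozn_weak_regular} hold, so $f$ is weakly regular on $I^2$; Proposition~\ref{prop_w_r_correctness} then gives $\mathcal N(f)=2$.

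I expect the only delicate point to be this extremum argument: one must make sure that the four corners (which lie on $\gamma_2\cup\gamma_4$ and are therefore permitted to be extrema) create no difficulty, and that the implication ``regular, or regular boundary, point $\Rightarrow$ not a local extremum'' is read off correctly from Definitions~\ref{ozn_f_1} and~\ref{ozn_f_2}. It is worth noting that the fourth hypothesis on $f$---existence of $U$-trajectories through a dense subset of the horizontal sides---is not used in establishing weak regularity; it will presumably be needed only for the finer study of level sets in the remainder of the section.
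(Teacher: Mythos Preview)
Your proof is correct and follows essentially the same route as the paper. The paper's own argument chooses the four vertices, observes that $f$ is \emph{almost} weakly regular with $\mathcal{N}(f)=2$ (i.e.\ it verifies only the weaker Condition~$3'$ that $\gamma_2,\gamma_4$ lie in level sets), and then invokes Lemma~\ref{lem_weak_regular_2} to upgrade to weak regularity. The content of that lemma is precisely your extremum argument: regular and regular boundary points cannot be local extrema, so the global extrema are confined to $\gamma_2\cup\gamma_4$, forcing $f^{-1}(0)=\gamma_4$ and $f^{-1}(1)=\gamma_2$. You have simply inlined the lemma rather than cited it; your observation that the fourth hypothesis (the $U$-trajectories through $\Gamma$) is not used here is also correct.
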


\begin{proof}
We take $z_1 = (1, 0)$, $z_2 = (1, 1)$, $z_3 = (0, 1)$, $z_4 = (0, 0)$. It is obvious that $f$ is almost weakly regular on $I^2$ for this sequence of points and that $\mathcal{N}(f) = 2$. Then as a consequence of Lemma~\ref{lem_weak_regular_2} this function is weakly regular on the square $I^2$.
\end{proof}

\begin{nas}\label{nas_prop_f_0}
$f(z) \in (0, 1)$ for all $z \in I \times (0, 1)$. Moreover
\begin{itemize}
	\item for every $c \in (0, 1)$ level set $f^{-1}(c)$ is a support of a simple continuous curve $\zeta_{c} : I \rightarrow I^{2}$ such that $\zeta_{c}(0) \in \{0\} \times (0, 1)$, $\zeta_{c}(1) \in \{1\} \times (0, 1)$, $\zeta_{c}(t) \in \mathring{I}^{2}$ $\forall t \in (0, 1)$;
	\item level sets $f^{-1}(0) = I \times \{0\}$ and $f^{-1}(1) = I \times \{1\}$ are supports of simple continuous curves.
\end{itemize}
\end{nas}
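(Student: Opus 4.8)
The plan is to combine Proposition \ref{prop_f_0}, which tells us $f$ is weakly regular on $I^2$ with $\mathcal{N}(f)=2$ and the distinguished points $z_1=(1,0)$, $z_2=(1,1)$, $z_3=(0,1)$, $z_4=(0,0)$, with the structural description of level sets in Lemma \ref{lem_prop_of_weak_reg}. First I would identify the four arcs: $\gamma_1=\{1\}\times[0,1]$ (from $z_1$ to $z_2$), $\gamma_2=[0,1]\times\{1\}$ traversed from $z_2$ to $z_3$, $\gamma_3=\{0\}\times[0,1]$ from $z_3$ to $z_4$, and $\gamma_4=[0,1]\times\{0\}$ from $z_4$ to $z_1$; the odd arcs $\mathring\gamma_1=\{1\}\times(0,1)$ and $\mathring\gamma_3=\{0\}\times(0,1)$ are the regular boundary arcs, while $\gamma_2$ lies in $f^{-1}(1)$ and $\gamma_4$ in $f^{-1}(0)$. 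Since $f$ is strictly monotone on $\gamma_1$ with endpoint values $f(z_1)=0$ and $f(z_2)=1$, and no interior point is a local extremum, we get $f(I^2)=[0,1]$ and, as in the proof of Lemma \ref{lem_weak_regular_2}, $f^{-1}(0)=\gamma_4=I\times\{0\}$, $f^{-1}(1)=\gamma_2=I\times\{1\}$. In particular $f(z)\in(0,1)$ whenever $z\notin\gamma_2\cup\gamma_4$, which covers all $z\in I\times(0,1)$, giving the opening assertion.

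Next, for $c\in(0,1)$ I would argue that the level set $f^{-1}(c)$ is connected, so that the description of a single component from Lemma \ref{lem_prop_of_weak_reg} applies to the whole set. One way: pick any component $\Gamma_c$; by Lemma \ref{lem_prop_of_weak_reg} (case 2, since $c\notin\{0,1\}$ forces $\Gamma_c\cap\mathring I^2\neq\emptyset$ — it cannot be one of the constant arcs $\gamma_2,\gamma_4$, nor a degenerate point $z_{2k}$ because here $z_2\neq z_3$ and $z_4\neq z_1$) it is the support of a simple arc with endpoints on $\mathring\gamma_1$ and $\mathring\gamma_3$, i.e. one endpoint in $\{1\}\times(0,1)$ and one in $\{0\}\times(0,1)$. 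To see there is only one such component, use that $f$ restricted to $\mathring\gamma_1=\{1\}\times(0,1)$ is a strictly monotone continuous surjection onto $(0,1)$ (monotone by weak regularity, and surjective since its image is an interval containing values approaching $f(z_1)=0$ and $f(z_2)=1$); hence each $c\in(0,1)$ is attained at exactly one point of $\mathring\gamma_1$, and since every component of $f^{-1}(c)$ meets $\mathring\gamma_1$, there is exactly one component. Parametrize it as $\zeta_c:I\to I^2$ with $\zeta_c(0)$ the unique preimage of $c$ on $\{0\}\times(0,1)$ and $\zeta_c(1)$ the one on $\{1\}\times(0,1)$; the remaining points lie in $\mathring I^2$ by the second bullet of Lemma \ref{lem_prop_of_weak_reg}.

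Finally, for $c=0$ and $c=1$ the sets $f^{-1}(0)=I\times\{0\}$ and $f^{-1}(1)=I\times\{1\}$ are already visibly supports of simple continuous curves (straight segments), so there is nothing further to prove. I expect the only real subtlety to be the connectedness of $f^{-1}(c)$ for $c\in(0,1)$: one must rule out stray components and confirm that the strict monotonicity of $f\restrict{}{\mathring\gamma_1}$ together with Lemma \ref{lem_prop_of_weak_reg} pins down both endpoints, and one should note in passing that monotonicity of $f$ on $\gamma_1$ and $\gamma_3$ also guarantees the endpoint maps $c\mapsto\zeta_c(0)$, $c\mapsto\zeta_c(1)$ are well defined single-valued assignments, which is what the statement implicitly uses.
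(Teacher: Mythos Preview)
Your argument is correct and follows the same route as the paper, which simply records that the corollary ``follows from Lemma~\ref{lem_prop_of_weak_reg}.'' You have essentially unpacked that one-line proof: invoking Proposition~\ref{prop_f_0} for weak regularity, identifying the arcs, and then applying Lemma~\ref{lem_prop_of_weak_reg}; in particular your observation that strict monotonicity of $f$ on $\mathring{\gamma}_1$ forces each $f^{-1}(c)$, $c\in(0,1)$, to have a single component is exactly the detail the paper leaves implicit.
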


\begin{proof}
This statement follows from Lemma~\ref{lem_prop_of_weak_reg}.
\end{proof}

\begin{lem}\label{lemma_f_2}
Let $v \in I^{2}$. For every $\varepsilon > 0$ there exists $\delta > 0$ to satisfy the following property:
\begin{itemize}
	\item[(ELC)] if a set $f^{-1}(c)$ is support of a simple continuous curve $\zeta_{c} : I \rightarrow I^{2}$ for a certain $c \in (0, 1)$ and $\zeta_{c}(s_{1})$, $\zeta_{c}(s_{2}) \in U_{\delta}(v) = \{ z \;|\; |z-v| < \delta \}$ for some $s_{1}$, $s_{2} \in I$, $s_{1} < s_{2}$, then $\zeta_{c}(t) \in U_{\varepsilon}(v)$ for all $t \in [s_{1}, s_{2}]$.
\end{itemize}
\end{lem}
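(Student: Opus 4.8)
The plan is to reduce Lemma~\ref{lemma_f_2} to a uniform local connectedness property of level sets:

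\medskip
\noindent$(\star)$\quad For every $v \in I^2$ and every neighbourhood $O$ of $v$ in $I^2$ there is a neighbourhood $U$ of $v$, $v \in U \subseteq O$, such that $\zeta_c^{-1}(U)$ is a connected subset of $I$ for every $c \in (0,1)$, where $\zeta_c$ is the simple curve with support $f^{-1}(c)$ furnished by Corollary~\ref{nas_prop_f_0}.

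\medskip
\noindent Since $\zeta_c$ maps $I$ homeomorphically onto $f^{-1}(c)$, the set $\zeta_c^{-1}(U)$ is connected precisely when $f^{-1}(c) \cap U$ is, so $(\star)$ is really a statement about level sets. Granting $(\star)$, the lemma follows at once: given $\varepsilon > 0$, apply $(\star)$ with $O = U_\varepsilon(v)$ to obtain $U$, then choose $\delta > 0$ with $U_\delta(v) \subseteq U$; if $\zeta_c(s_1), \zeta_c(s_2) \in U_\delta(v)$ with $s_1 < s_2$, then $s_1, s_2$, and hence the whole segment $[s_1, s_2]$, lie in the interval $\zeta_c^{-1}(U)$, i.e.\ $\zeta_c(t) \in U \subseteq U_\varepsilon(v)$ for all $t \in [s_1, s_2]$, which is (ELC). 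Moreover, to prove $(\star)$ it suffices to produce $U$ with $f^{-1}(c) \cap U$ connected for all $c$ in some neighbourhood $J$ of $f(v)$, since replacing $U$ by $U \cap f^{-1}(J)$ does not change $f^{-1}(c) \cap U$ for $c \in J$ and makes it empty for $c \notin J$.

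When $v \in \mathring I^2$ the claim is easy. Take for $U$ a canonical neighbourhood of $v$ with $U \subseteq O$ (possible by Remark~\ref{remk_kanon_neighb}) and let $\varphi : U \to \Int{D^2}$ be the homeomorphism of Definition~\ref{ozn_f_1}, so that $f \circ \varphi^{-1}(z) = Re z + f(v)$. Then $f^{-1}(c) \cap U = \varphi^{-1}(\{\, z \in \Int{D^2} \;|\; Re z = c - f(v) \,\})$, and the set in braces is a (possibly empty) vertical chord of the open unit disk, hence connected; thus $f^{-1}(c) \cap U$ is connected for every $c$, and we are done in this case.

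The two boundary cases are the heart of the matter and the main obstacle. Let $v$ first be a regular boundary point — then necessarily $v \in \{0,1\}\times(0,1)$ and $c^* = f(v) \in (0,1)$ — and take $U$ a canonical neighbourhood from Definition~\ref{ozn_f_2}, with homeomorphism $\psi : U \to D^2_+$. Then $f^{-1}(c^*) \cap U = \psi^{-1}(\{0\}\times[0,1))$ is a single arc, $f$ is strictly monotone on $U \cap \Fr{I^2} = \psi^{-1}((-1,1)\times\{0\})$ (so $f^{-1}(c)\cap U$ meets $\Fr{I^2}$ in at most one point), and every point of $f^{-1}(c)\cap U$ off $\Fr{I^2}$ is an interior regular point of $f$. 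Analyzing connected components exactly as in the proof of Lemma~\ref{lem_prop_of_weak_reg}, each component of $f^{-1}(c)\cap U$ is an arc; there is at most one component meeting $\Fr{I^2}$, and a component disjoint from $\Fr{I^2}$ is excluded for $c$ near $c^*$ by Proposition~\ref{prop_func_on_compact}, which confines the whole of $f^{-1}(c)\cap\Cl{U}$ to an arbitrarily small neighbourhood of the single arc $f^{-1}(c^*)\cap\Cl{U}$ as $c\to c^*$, forcing such a component to merge with the distinguished one. Hence $f^{-1}(c)\cap U$ is connected for $c$ near $c^*$, and the reduction above finishes this case. When $v$ lies on a degenerate arc $I\times\{0,1\}$ (corners included), say $f(v)=0=\min f$, a small $U$ satisfies $f(U)\subseteq[0,\eta)$, the level sets $f^{-1}(c)\cap U$ with $c>0$ consist of interior regular points (together with, near a corner, boundary pieces treated as above), and again each component is an arc; excluding extra components is the delicate step, which I would carry out by combining Proposition~\ref{prop_func_on_compact} (pinning $f^{-1}(c)\cap U$ into an arbitrarily thin neighbourhood of $I\times\{0\}$ as $c\to0$) with a Jordan-curve argument based on the fact that $\zeta_c$ bounds the thin region $f^{-1}([0,c])$ — this is where the hypothesis on existence of $U$-trajectories through a dense subset of $(0,1)\times\{0,1\}$ is used, to control how $\zeta_c$ separates the plane near the degenerate edge. (Equivalently, the whole lemma can be run by contradiction: a counterexample produces, along a subsequence, curves $\zeta_{c_n}$ with $c_n\to c^*=f(v)$ and parameters $s^n_1<t^n<s^n_2$ with $\zeta_{c_n}(s^n_1),\zeta_{c_n}(s^n_2)\to v$, $\zeta_{c_n}(t^n)\to w$, $f(w)=c^*$, $w\neq v$, and the component analysis above, performed in disjoint canonical neighbourhoods of $v$ and $w$, yields the contradiction.)
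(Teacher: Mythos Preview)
Your reduction to $(\star)$ and the interior case are clean. The gap is exactly where you flag it, on $I\times\{0,1\}$, and it also undermines your parenthetical contradiction argument.

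Points of $I\times\{0,1\}$ are neither regular points nor regular boundary points of $f$ --- the function is constant on that entire side, so Definition~\ref{ozn_f_2} cannot hold --- and therefore they have \emph{no} canonical neighbourhood. Your proposed endgame ``component analysis \dots performed in disjoint canonical neighbourhoods of $v$ and $w$'' is thus simply unavailable when $f(v)\in\{0,1\}$. The direct route hits the same wall: there is no local model around such $v$ from which to manufacture a $U$ of controlled shape, and Proposition~\ref{prop_func_on_compact} only pins $f^{-1}(c)$ near $f^{-1}(c^*)$ globally; it does not follow that $f^{-1}(c)\cap U$ is near $f^{-1}(c^*)\cap U$ (intersection with a fixed open set is not Hausdorff-continuous), so the ``merging'' of extra components is unjustified. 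The same objection, incidentally, applies to your treatment of the regular boundary case.

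The paper sets up the contradiction exactly as in your parenthetical --- $v=\zeta_d(s)$, $w=\zeta_d(\tau)$ on the limit level curve, $s\ne\tau$ --- but the decisive device is different and is what your sketch is missing: choose $t_0$ strictly between $s$ and $\tau$ and a $U$-trajectory $\gamma_0$ through the \emph{intermediate} point $z_0=\zeta_d(t_0)$. When $d\in(0,1)$ one has $z_0\in\mathring{I}^{2}$ regardless of where $v$ and $w$ sit, and $\gamma_0$ comes from Definition~\ref{ozn_f_1}; when $d\in\{0,1\}$ one first perturbs $t_0$ into the dense set $\zeta_d^{-1}(\Gamma)$, and $\gamma_0$ is precisely what the $\Gamma$-hypothesis supplies. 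This $\gamma_0$ cuts the curvilinear rectangle $f^{-1}([d-\alpha,d+\alpha])$ into two pieces with $v$ in one and $w$ in the other; for large $k$ the sub-arc $\zeta_k([s'_k,s''_k])$ runs from near $v$ to near $w$ and back, hence must cross $\gamma_0$ at least twice, contradicting the fact that a $U$-trajectory meets each level set in at most one point. The separator is the $U$-trajectory through a point \emph{between} $v$ and $w$ on $\zeta_d$; no local model at $v$ or $w$ is ever invoked.
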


\begin{remk}
Fulfillment of the {\upshape (ELC)} condition is an analog of so called \emph{equi-locally-connectedness} of a family of level sets of $f$ in a point $v \in I^{2}$ (see~\cite{Toki_51}).
\end{remk}

\begin{proof}
Let contrary to Lemma statement there exist $\varepsilon > 0$, a sequence $\{d_{j}\}$ of function $f$ values, a family $\{\zeta_{j}\}$ of simple Jordan curves with supports $\{f^{-1}(d_{j})\}$, and also sequences $\{s_{j}'\}$, $\{s_{j}''\}$ and $\{\tau_{j}\}$ of parameter values, such that correlations hold true
\begin{gather*}
s_{j}' < \tau_{j} < s_{j}'' \quad \forall j \in \nn \,, \\
\lim_{j \rightarrow \infty} \zeta_{j}(s_{j}') = \lim_{j \rightarrow \infty} \zeta_{j}(s_{j}'') = v \,, \\
dist (\zeta_{j}(\tau_{j}), v) \geq \varepsilon \quad \forall j \in \nn \,.
\end{gather*}
We shall denote $v_{j}' = \zeta_{j}(s_{j}')$, $v_{j}'' = \zeta_{j}(s_{j}'')$, $w_{j} = \zeta_{j}(\tau_{j})$, $j \in \nn$.

From the compactness of square it follows that the sequence $\{w_{j}\}$ has at least one limit point. Going over to a subsequence we can assume that this sequence is convergent. Let its limit is $w$. The continuity of $f$ implies
\[
d = \lim_{i \rightarrow \infty} d_{j} = \lim_{i \rightarrow \infty} f(\zeta_{j}(\tau_{j})) = f(w) = f(v) \,.
\]

Let us fix a simple continuous curve $\zeta_{d} : I \rightarrow I^{2}$ with the support $f^{-1}(d)$. Then $v = \zeta_{d}(s)$, $w = \zeta_{d}(\tau)$ for certain values of parameter $s$, $\tau \in I$, $s \neq \tau$.

We consider the following possibilities.

\medskip
\emph{Case 1.} Let $d \notin \{0, 1\}$.

We fix $t_{0} \in I$ such that one of pairs of inequalities $s < t_{0} < \tau$ or $\tau < t_{0} < s$ holds true. Designate $z_{0} = \zeta_{d}(t_{0})$. We note that it follows that $t_{0} \notin \{0, 1\}$ from the choice of $t_{0}$, therefore Corollary~\ref{nas_prop_f_0} implies inequality $z_{0} \notin \{0, 1\} \times I$, which in turn has as a consequence inclusion $z_{0} \in \mathring{I}^{2} = \Int{I^{2}}$.

Definition~\ref{ozn_f_1} implies that for a certain $\alpha > 0$ through $z_{0}$ passes an $U$-trajectory $\gamma_{0} : I \rightarrow I^{2}$ such that $\gamma_{0}(0) \in f^{-1}(d-\alpha)$,  $\gamma_{0}(1) \in f^{-1}(d+\alpha)$, $\gamma_{0}(1/2) = z_{0}$. Moreover, if necessary we can decrease $\alpha$ as much that the curve $\gamma_{0}$ will not intersect lateral sides of the square $I^{2}$.

Let us consider a curvilinear quadrangle $J$ bounded by Jordan curves $\zeta_{d-\alpha} = f^{-1}(d-\alpha)$, $\zeta_{d+\alpha} = f^{-1}(d+\alpha)$, $\eta_{0} = f^{-1}([d-\alpha, d+\alpha]) \cap (\{0\} \times I)$, $\eta_{1} = f^{-1}([d-\alpha, d+\alpha]) \cap (\{1\} \times I)$. It is clear that this quadrangle is homeomorphic to closed disk.

Ends $\zeta_{d}(0)$ and $\zeta_{d}(1)$ of the Jordan curve $\zeta_{d}$ are contained in lateral sides of $J$, namely $\zeta_{d}(0) \in \eta_{0}$, $\zeta_{d}(1) \in \eta_{1}$ (see Corollary~\ref{nas_prop_f_0}). From the other side by construction the curve $\gamma_{0}$ is a cut of the quadrangle $J$ between the points $\gamma_{0}(0) \in \zeta_{d-\alpha}$ and $\gamma_{0}(1) \in \zeta_{d+\alpha}$ which are contained in its bottom and top side respectively.

From what we said above it follows that the set $J \setminus \gamma_{0}(I)$ has two connected components $J_{0}$ and $J_{1}$, moreover $\eta_{0}$ and $\eta_{1}$ are contained in different components. Let $\eta_{0} \subseteq J_{0}$, $\eta_{1} \subseteq J_{1}$.

It is obvious that $\gamma_{0}(I) \cap \zeta_{d}(I) = \{z_{0}\} = \{\zeta_{d}(t_{0})\}$. Hence points $v$ and $w$ belong to different components of $J \setminus \gamma_{0}(I)$.

Really, if $s < t_{0} < \tau$ then $\zeta_{d}([0, s]) \subseteq J_{0}$, $\zeta_{d}([\tau, 1]) \subseteq J_{1}$, because $\zeta_{d}([0, s])$, $\zeta_{d}([\tau, 1]) \subseteq J \setminus \gamma_{0}(I)$, these sets are connected and inequalities are fulfilled $\emptyset \neq \zeta_{d}([0, s]) \cap J_{0} \ni \zeta_{d}(0)$, $\emptyset \neq \zeta_{d}([\tau, 1]) \cap J_{1} \ni \zeta_{d}(1)$. By analogy, if $\tau < t_{0} < s$ then $\zeta_{d}([0, \tau]) \subseteq J_{0}$ and $\zeta_{d}([s, 1]) \subseteq J_{1}$.

Let $V$ and $W$ are open neighbourhoods of the points $v$ and $w$ respectively, and one of these sets does not intersect $\Cl{J_{0}}$, the other has an empty intersection with $\Cl{J_{1}}$. Existence of such neighbourhoods is a consequence from the following argument: if for a certain $m \in \{0, 1\}$ the point $z$ does not belong neither to the set $J_{m}$, nor to the curve $\gamma_{0}$, then $z \in \Int{(\rr^{2} \setminus J_{m})}$ since $\Cl{J_{m}} = J_{m} \cup \gamma_{0}(I)$.

So, one of the sets $V_{0} = V \cap J$, $W_{0} = W \cap J$ belongs to $J_{0}$, other is contained in $J_{1}$.

Fix so big $k \in \nn$ that $v_{k}'$, $v_{k}'' \in V$, $w_{k} \in W$, $d_{k} \in (d-\alpha, d+\alpha)$. Then $v_{k}'$, $v_{k}''$, $w_{k} \in \zeta_{k}(I) = f^{-1}(d_{k}) \subseteq J$ and $v_{k}'$, $v_{k}'' \in V_{0}$, $w_{k} \in W_{0}$. Thus the ends of both simple continuous curves $\zeta_{k}([s_{k}', \tau_{k}])$ and $\zeta_{k}([\tau_{k}, s_{k}''])$ are contained in different connected components of $J \setminus \gamma_{0}$. Therefore there exist $t' \in (s_{k}', \tau_{k})$, $t'' \in (\tau_{k}, s_{k}'')$ such that $\zeta_{k}(t')$, $\zeta_{k}(t'') \in \gamma_{0}(I)$.

By construction we have $\zeta_{k}(t') \neq \zeta_{k}(t'')$, but this is impossible since the arc $\gamma_{0}$ is $U$-trajectory and should intersect a level set $f^{-1}(d_{k}) = \zeta_{k}(I)$ not more than in one point.
This brings us to the contradiction with our initial assumptions and proves Lemma in the case~1.

\medskip
\emph{Case 2.} Let $d \in \{0, 1\}$.

Obviously, $\zeta_d(I)$ is a connected component of the set $I \times \{0, 1\}$. Therefore the set $\Gamma \cap \zeta_d(I)$ is dense in $\zeta_d(I)$. Mapping $\zeta_d$ is homeomorphism onto its image, hence the set $\Gamma' = \zeta_d^{-1}(\Gamma \cap \zeta_d(I))$ is dense in segment. We fix $t_{0} \in \Gamma'$ such that one of the following pairs of inequalities $s < t_{0} < \tau$ or $\tau < t_{0} < s$ is fulfilled. Denote $z_{0} = \zeta_{d}(t_{0})$. By the choice of $t_0$ there exists a $U$-trajectory which passes through $z_0$.

Further on this case is considered by analogy with case~1 with evident changes.
\end{proof}


Let us recall one important definition (see~\cite{Whitney_33, Morse_36}). Let $\alpha$, $\beta : I \rightarrow \rr^{2}$ be continuous curves. We designate by $Aut_{+}(I)$ a set of all orientation preserving homeomorphisms of the segment onto itself. For every $H \in Aut_{+}(I)$ ($H(0)=0$) we sign
\[
D(H) = \max_{t \in I} \dist (\alpha(t), \beta \circ H(t)) \,.
\]

\begin{ozn}
Value
\[
\df (\alpha, \beta) = \inf_{H \in Aut_{+}(I)} D(H)
\]
is called a \emph{Frechet distance} between curves $\alpha$ and $\beta$.
\end{ozn}

For every value $c \in I$ of a function $f$ we can fix a parametrization $\zeta_{c} : I \rightarrow \rr^2$ of the level set $f^{-1}(c)$ in such way that an inclusion $\zeta_{c}(0) \in \{0\} \times I$ holds true (see Corollary~\ref{nas_prop_f_0}).
The following statement is valid.
\begin{lem}\label{lemma_f_3}
Let $c \in I$.
For every $\varepsilon > 0$ there exists $\delta > 0$ such that $\df (\zeta_c, \zeta_d) < \varepsilon$ when $|c-d| < \delta$.
\end{lem}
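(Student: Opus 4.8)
\textbf{Proof proposal for Lemma~\ref{lemma_f_3}.}

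The plan is to reduce the Fréchet bound to a uniform equicontinuity statement for the family of parametrizations $\{\zeta_d\}$ together with a pointwise/``along the curve'' control of how $\zeta_d$ drifts away from $\zeta_c$ as $d\to c$, and to extract both ingredients from Lemma~\ref{lemma_f_2} (the (ELC) condition) and from the continuity of $f$. First I would cover the compact support $f^{-1}(c)=\zeta_c(I)$ by finitely many balls $U_{\delta_i/2}(v_i)$, where for each $v_i\in\zeta_c(I)$ the radius $\delta_i$ is the number produced by Lemma~\ref{lemma_f_2} for the target radius $\varepsilon/4$ (say). Let $\delta^\ast=\min_i\delta_i$ and let $\lambda>0$ be a Lebesgue number for this finite cover of $\zeta_c(I)$. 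The key qualitative fact I want is: for $d$ close enough to $c$, every point of $f^{-1}(d)$ lies within $\delta^\ast/2$ of $f^{-1}(c)$, and conversely. The forward inclusion $f^{-1}(d)\subseteq U_{\delta^\ast/2}(f^{-1}(c))$ for $|c-d|$ small follows from Proposition~\ref{prop_func_on_compact} applied to $g=f$ on the compact $I^2$, since $U_{\delta^\ast/2}(f^{-1}(c))$ is an open neighbourhood of the level set $f^{-1}(c)$ and hence contains $f^{-1}(B_\rho(c))$ for some $\rho>0$.

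Next I would build the reparametrization $H$. Partition $I$ by $0=u_0<u_1<\dots<u_N=1$ finely enough that each arc $\zeta_c([u_{j-1},u_j])$ has diameter $<\lambda$, so it is contained in one of the cover balls $U_{\delta_{i(j)}/2}(v_{i(j)})$. I want to choose, for $d$ close to $c$, parameter values $0=t_0<t_1<\dots<t_N=1$ with $\zeta_d(t_j)$ close to $\zeta_c(u_j)$, and then let $H$ be the piecewise-affine homeomorphism of $I$ sending $u_j\mapsto t_j$. To produce the $t_j$ I use that $\zeta_c(u_j)\in\mathring I^2$ for $0<j<N$ is a regular point (Corollary~\ref{nas_prop_f_0}), so a $U$-trajectory through it meets $f^{-1}(d)$ in a single point $\zeta_d(t_j)$ which tends to $\zeta_c(u_j)$ as $d\to c$; the endpoints $t_0=0$, $t_N=1$ are forced, and a short argument using the regular boundary points on $\{0,1\}\times(0,1)$ (or simply continuity of the level sets near the corners via Proposition~\ref{prop_func_on_compact}) shows $\zeta_d(0)\to\zeta_c(0)$ and $\zeta_d(1)\to\zeta_c(1)$. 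Thus for $d$ sufficiently close to $c$ we get $|\zeta_d(t_j)-\zeta_c(u_j)|<\delta^\ast/2$ for all $j$, and in particular $\zeta_d(t_{j-1}),\zeta_d(t_j)$ both lie in $U_{\delta_{i(j)}}(v_{i(j)})$.

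Now I combine the pieces. Fix $t\in[u_{j-1},u_j]$, so $H(t)\in[t_{j-1},t_j]$. On one hand $\zeta_c(t)$ lies in $U_{\delta_{i(j)}/2}(v_{i(j)})\subseteq U_\varepsilon(v_{i(j)})$, indeed within $\varepsilon/4$ if radii were chosen small. On the other hand $\zeta_d(t_{j-1})$ and $\zeta_d(t_j)$ are both in $U_{\delta_{i(j)}}(v_{i(j)})$, so by the (ELC) conclusion of Lemma~\ref{lemma_f_2} the whole subarc $\zeta_d([t_{j-1},t_j])$ lies in $U_{\varepsilon/4}(v_{i(j)})$; in particular $\zeta_d(H(t))$ does. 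Hence $\dist(\zeta_c(t),\zeta_d(H(t)))<\varepsilon/2<\varepsilon$ for every $t$, giving $D(H)<\varepsilon$ and therefore $\df(\zeta_c,\zeta_d)<\varepsilon$ whenever $|c-d|$ is smaller than the (finitely many) thresholds collected above. One has to double-check the monotonicity $t_0<t_1<\dots<t_N$, i.e.\ that the $U$-trajectories through consecutive points $\zeta_c(u_j)$ hit $f^{-1}(d)$ in the same cyclic order; this follows because for $d$ near $c$ the curve $\zeta_d$ stays in a thin neighbourhood of $\zeta_c$ and each $U$-trajectory crosses it transversally at a unique point, so the crossings inherit the order of the $u_j$ — this bookkeeping, rather than any deep point, is the main thing to get right. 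A mild subtlety is the case $c\in\{0,1\}$, where $f^{-1}(c)$ is a side of the square; there one invokes the fourth hypothesis on $f$ (density of points admitting $U$-trajectories in $(0,1)\times\{0,1\}$), exactly as in Case~2 of the proof of Lemma~\ref{lemma_f_2}, to still obtain enough $U$-trajectories through a dense set of the $u_j$, which suffices after a further small perturbation of the partition.
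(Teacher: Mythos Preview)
Your proposal is correct and follows essentially the same approach as the paper: cover $\zeta_c(I)$ by small (ELC-controlled) neighbourhoods, choose a fine partition $\{\tau_k\}$ of $I$, pass $U$-trajectories through the points $\zeta_c(\tau_k)$, let $H$ be the piecewise-linear map sending $\tau_k$ to the parameter of the unique intersection with $\zeta_d$, and bound $D(H)$ via Lemma~\ref{lemma_f_2}; the case $c\in\{0,1\}$ is handled by perturbing the partition into the dense set $\Gamma'$. The one place the paper is more explicit is precisely the monotonicity $t_0<\dots<t_N$ you flag: it argues via the curvilinear quadrangle bounded by $\zeta_c$, $\zeta_d$, and the extreme $U$-trajectories, in which the intermediate (pairwise disjoint) $U$-trajectories are cuts between opposite sides and hence their endpoints on $\zeta_d$ inherit the order of those on $\zeta_c$.
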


\begin{proof}
Let $c \in I$, $\zeta_{c} : I \rightarrow I^{2}$ is a simple continuous curve with a support $f^{-1}(c)$. Let $\varepsilon > 0$ is given.

Let us find for every $t \in I$ a number $\delta(t) > 0$ which satisfies Lemma~\ref{lemma_f_2} for a point $\zeta_{c}(t)$ and $\hat{\varepsilon} = \varepsilon/2$.

We consider two possibilities.

\medskip
\emph{Case 1}. Let $c \in (0, 1)$. It is clear that for every $t \in I$ there exists a neighbourhood $U(t)$ of $\zeta_{c}(t)$ which complies with the following conditions:
\begin{itemize}
	\item $U(t) \subseteq U_{\delta(t)}(\zeta_{c}(t))$;
	\item $U(t)$ is a canonical neighbourhood from Definition~\ref{ozn_f_1} when $t \in (0, 1)$ or from Definition~\ref{ozn_f_2} for $t \in \{0, 1\}$.
\end{itemize}

Let a family of sets
\[
U_{0} = U(0), \quad U_{1} = U(t_{1}), \ldots, U_{n-1} = U(t_{n-1}), \quad U_{n} = U(1) \,,
\]
forms a finite subcovering of a covering $\{U(t)\}_{t \in I}$ of the compact $f^{-1}(c)$.

We denote $z_{i} = \zeta_{c}(t_{i})$, $J_{i} = \zeta_{c}^{-1}(U_{i} \cap f^{-1}(c))$, $i \in \{0, \ldots, n\}$. By construction a family of sets $\{J_{i}\}_{i=0}^{n}$ is a covering of $I$. From Definitions~\ref{ozn_f_1} and~\ref{ozn_f_2} it follows that
\[
J_{0} \cong [0, 1) \,, \quad
J_{n} \cong (0, 1] \,; \qquad
J_{i} \cong (0, 1) \,, \quad i \in \{1, \ldots, n-1\} \,.
\]

If necessary we decrease neighbourhoods $U_{i}$ as much that on one hand they remain canonical and form a covering of $f^{-1}(c)$ as before, on the other hand no two different intervals from the family $\{J_{i}\}_{i = 0}^{n}$ should have a common endpoint.

It is straightforward that there exists a finite sequence of numbers $0=\tau_{0} < \tau_{1} < \ldots < \tau_{m-1} < \tau_{m}=1$, which satisfies a condition:
\begin{itemize}
	\item for every $k \in \{1, \ldots, m\}$ there exists $i(k) \in \{0, \ldots, n\}$ such that $\tau_{k-1}$, $\tau_{k} \in J_{i(k)}$.
\end{itemize}
We fix such a family $\{\tau_{k}\}_{k=0}^{m}$ and denote by $\theta : \{1, \ldots, m\} \rightarrow \{0, \ldots, n\}$ a mapping $\theta : k \mapsto i(k)$. We also designate $w_{k} = \zeta_{c}(\tau_{k})$, $k \in \{0, \ldots, m\}$.

From Definitions~\ref{ozn_f_1} and~\ref{ozn_f_2} it follows that through every point $w_{k}$, $k \in \{0, \ldots, m\}$ passes an $U$-trajectory $\gamma_{k} : I \rightarrow I^{2}$ which complies with inequalities $f \circ \gamma_{k}(0) < c < f \circ \gamma_{k}(1)$. We can also assume that $\gamma_{0}(I) \subset \{0\} \times I$ and $\gamma_{m}(I) \subset \{1\} \times I$ (see Definition~\ref{ozn_f_2}). If necessary we decrease these $U$-trajectories as much that they should be pairwise disjoint and for every $k \in \{0, \ldots, m\}$ relations $\gamma_{k-1}(I)$, $\gamma_{k}(I) \subset U_{\theta(k)}$ should hold true (that can be done since the curves $\gamma_{k}$ are continuous and by construction inclusions $w_{k-1}$, $w_{k} \in U_{\theta(k)}$ are valid). Let us designate
\[
\delta = \min_{k \in \{0, \ldots, m\}}
\min ( |f \circ \gamma_{k}(0) - c|, |f \circ \gamma_{k}(1) - c| ) \,.
\]

Suppose that an inequality $|c-d| < \delta$ holds true. Then by construction a simple continuous curve $\zeta_{d} : I \rightarrow I^{2}$ with the support $f^{-1}(d)$ must intersect every $U$-trajectory $\gamma_{k}$ in a single point $w_{k}^{d}$. Denote $\tau_{k}^{d} = \zeta_{d}^{-1}(w_{k}^{d})$, $k \in \{0, \ldots, m\}$.

By choice of parameterization of curves $\zeta_{c}$ and $\zeta_{d}$ we have $\zeta_{c}(j)$, $\zeta_{d}(j) \in \{j\} \times I$, $j = 0, 1$. Therefore $w_{k}^{d} \in \gamma_{k}(I)$ when $k = 0$ or $m$, and $\tau_{0}^{d} = 0$, $\tau_{m}^{d} = 1$.

We designate $K = [\min(c, d), \max(c, d)]$. Let us consider a curvilinear quadrangle $R$ bounded by curves $\zeta_{c}$, $\gamma_{0}(I) \cap f^{-1}(K)$, $\zeta_{d}$, $\gamma_{m}(I) \cap f^{-1}(K)$. Curves $\gamma_{k}(I) \cap f^{-1}(K)$, $k \in \{1, \ldots, m-1\}$ form cuts of this quadrangle between top and bottom sides and are pairwise disjoint. The straightforward consequence of this fact is that corresponding endpoints $\{w_{k}\}$ and $\{w_{k}^{d}\}$ of these cuts are similarly ordered on the curves $\zeta_{c}$ and $\zeta_{d}$. Therefore
\[
0 = \tau_{0}^{d} < \tau_{1}^{d} < \ldots < \tau_{m-1}^{d} < \tau_{m}^{d} = 1 \,.
\]

Let a mapping $H : I \rightarrow I$ translates $\tau_{k}$ to $\tau_{k}^{d}$ for every $k$ and a segment $[\tau_{k-1}, \tau_{k}]$ linearly maps onto $[\tau_{k-1}^{d}, \tau_{k}^{d}]$, $k \in \{1, \ldots, m\}$. It is clear that $H \in Aut_{+}(I)$.

Let us estimate the value of $D(H)$. By construction for every $k \in \{1, \ldots, m\}$ we have
\[
w_{k-1}, w_{k}, w_{k-1}^{d}, w_{k}^{d} \in U_{\theta(k)} \,,
\]
therefore, it follows from the choice of neighbourhood $U_{\theta(k)}$ of the point $z_{\theta(k)}$ and from Lemma~\ref{lemma_f_2} that
\[
\zeta_{c}([\tau_{k-1}, \tau_{k}]), \zeta_{d}([\tau_{k-1}^{d}, \tau_{k}^{d}]) \subset U_{\varepsilon/2}(z_{\theta(k)})
\]
and for every $t \in [\tau_{k-1}, \tau_{k}]$ an inequality $\dist (\zeta_{c}(t), \zeta_{d} \circ H(t)) < \varepsilon$ holds true.

From what was said above we make a consequence that
\[
\df (\zeta_{c}, \zeta_{d}) \leq D(H) = \max_{k \in \{1, \ldots, m\}} \max_{t \in [\tau_{k-1}, \tau_{k}]} \dist (\zeta_{c}(t), \zeta_{d} \circ H(t)) < \varepsilon \,,
\]
if $|c-d| < \delta$.

\medskip
\emph{Case 2}. Let $c \in \{0, 1\}$. In this case proof mainly repeats argument of the previous case with the following changes.

We know already that a set $\Gamma' = \zeta_c^{-1}(\Gamma \cap \zeta_c(I))$ is dense in segment (see the proof of Lemma~\ref{lemma_f_2}). Moreover, every point of the set $\{0, 1\} \times (0, 1)$ is a regular boundary point of $f$. Therefore, on each of lateral sides of the square $f$ is strongly monotone, hence both of lateral sides of the square are supports of $U$-trajectories, and $0, 1 \in \Gamma'$.

The set $\zeta_c(I)$ in the case under consideration is the linear segment, so we can select a covering $\{U(t)\}_{t \in I}$ from the following reason:
\begin{itemize}
	\item $U(t) = U_{\delta(t)}(\zeta_c(t))$ for $t = 0, 1$;
	\item $U(t) = U_{\delta'(t)}(\zeta_c(t))$, where $\delta'(t) < \min(\delta, t, 1-t)$ when $t \in (0, 1)$.
\end{itemize}

After the choice of numbers $0 = \tau_0 < \tau_1 < \ldots < \tau_m = 1$ is done, we can with the help of small perturbations of $\tau_1, \ldots, \tau_{m-1}$ achieve that $\{\tau_0, \ldots, \tau_m\} \subset \Gamma'$ and a family $\{\tau_k\}$ keeps its properties (see case~1). Then for every $k \in \{0, \ldots, m\}$ there exists an $U$-trajectory which passes through $\zeta_c(\tau_k)$.

Subsequent proof repeats the argument of case~1.
\end{proof}

Let us remind several important definitions.

Let $\lambda : I \rightarrow \rr^{2}$ is a continuous curve. For every $n \in \nn$ we designate by $S_{n}(\lambda)$ a set of all sequences $(p_{i} \in \lambda(I))_{i=0}^{n}$ of the length $n+1$, such that $p_{i} = \lambda(t_{i})$, $i = 0, \ldots, n$, and inequalities $t_{0} \leq t_{1} \leq \ldots \leq t_{n}$ hold true. Denote
\[
d(p_{0}, \ldots, p_{n}) = \min_{i=1, \ldots ,n} \dist(p_{i-1}, p_{i}) \,.
\]

\begin{ozn}[see~\cite{Whitney_33, Morse_36}]
Let $\lambda : I \rightarrow \rr^{2}$ be a continuous curve,
\[
\mu_{n}(\lambda) = \sup_{(p_{0}, \ldots, p_{n}) \in S_{n}(\lambda)} d(p_{0}, \ldots, p_{n}) \,, \quad n \in \nn \,.
\]
A value
\[
\mu_{\lambda} = \sum_{n \in \nn} \frac{\mu_{n}(\lambda)}{2^{n}}
\]
is called \emph{$\mu$-length} of $\lambda$.
\end{ozn}

Let again $\lambda : I \rightarrow \rr^{2}$ is a continuous curve. We consider a family of continuous curves $\lambda_{t} : I \rightarrow \rr^{2}$, $\lambda_{t}(\tau) = \lambda(t\tau)$, $t \in I$. Let $\mu(t) = \mu_{\lambda_{t}}$, $t \in I$, is a $\mu$-length of the curve $\lambda$ from 0 to $t$. It is known that $\mu$ continuously and monotonically maps $I$ onto $[0, \mu_{\lambda}]$. It is found that for an arbitrary continuous curve $\lambda$ and for every $c \in [0, \mu_{\lambda}]$ a set $\lambda(\mu^{-1}(c))$ is singleton. Hence a mapping $r_{\lambda} : [0, \mu_{\lambda}] \rightarrow \lambda(I) \subset \rr^{2}$, $r_{\lambda}(c) = \lambda(\mu^{-1}(c))$, is well defined. It is known also that this mapping is continuous.

\begin{ozn}[see~\cite{Morse_36}]
A curve $r_{\lambda}$ is called a \emph{$\mu$-parameterization} of $\lambda$.
\end{ozn}

We say that a continuous curve $\eta : I \rightarrow \rr^{2}$ is \emph{derived} from a continuous curve $\lambda : I \rightarrow \rr^{2}$ if there exists such a continuous nondecreasing surjective mapping $u : I \rightarrow I$ that $\eta(t) = \lambda \circ u(t)$, $t \in I$. It is known that an arbitrary curve $\lambda$ is derived from its $\mu$-parameterization $r_{\lambda}$ (see~\cite{Morse_36}). Therefore, if $\lambda$ is a simple continuous curve, then $r_{\lambda}$ is also a simple continuous curve.

\begin{ozn}[see~\cite{Morse_36}]
\emph{Class of curves} is a family of all continuous curves with the same $\mu$-parameterization.
\end{ozn}

It turns out (see~\cite{Morse_36}) that the Frechet distance between curves does not change when we replace curves to other representatives of their class of curves. Consequently Frechet distance is well defined on the set of all classes of curves. Moreover it is known that Frechet distance is the distance function on this set. We shall denote metric space of classes of curves with the Frechet distance by $\cM$.

We consider a set $R \subseteq \cM \times \rr$,
\[
R = \bigcup_{\lambda \in \cM} \{ (\lambda, \tau) \,|\, \tau \in [0, \mu_{\lambda}] \} \,,
\]
and a correspondence $q : R \rightarrow \rr^{2}$,
\[
q(\lambda, \tau) = r_{\lambda}(\tau) \,, \quad (\lambda, \tau) \in R \,,
\]
which maps a pair $(\lambda, \tau)$ to a point of the curve $\lambda$ such that $\mu$-length of $\lambda$ from $\lambda(0)$ to this point equals $\tau$. It is known (see~\cite{Morse_36}) that the mapping $q$ is continuous. This allows us to prove following.

\begin{lem}\label{lemma_f_4}
Let $\varphi : I \rightarrow \cM$ be a continuous mapping such that $\mu_{\varphi(t)} > 0$ for every $t \in I$.


Then a map $\Phi : I^{2} \rightarrow \rr^{2}$,
\[
\Phi (\tau, t) = r_{\varphi(t)}(\mu_{\varphi(t)} \cdot \tau) \,, \quad (\tau, t) \in I^{2} \,,
\]
is continuous and for any $t \in I$ correlation $\Phi(I \times \{t\}) = \varphi(t)(I)$ holds true.
\end{lem}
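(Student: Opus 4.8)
The plan is to deduce everything from the continuity of the evaluation map $q : R \to \rr^2$ and of the $\mu$-length functional, both of which are quoted from~\cite{Morse_36} in the text preceding the statement. The map $\Phi$ is literally the composition
\[
(\tau, t) \mapsto \bigl(\varphi(t),\, \mu_{\varphi(t)} \cdot \tau\bigr) \mapsto q\bigl(\varphi(t),\, \mu_{\varphi(t)} \cdot \tau\bigr) = r_{\varphi(t)}(\mu_{\varphi(t)} \cdot \tau) \,,
\]
so the whole point is to check that the intermediate map $I^2 \to R$ is well defined and continuous; then $\Phi = q \circ (\text{that map})$ is continuous as a composition of continuous maps.

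First I would verify that the pair $(\varphi(t), \mu_{\varphi(t)} \cdot \tau)$ actually lies in $R$: by definition $R = \bigcup_{\lambda} \{\lambda\} \times [0, \mu_\lambda]$, and since $\tau \in I = [0,1]$ we have $\mu_{\varphi(t)} \cdot \tau \in [0, \mu_{\varphi(t)}]$, so indeed $(\varphi(t), \mu_{\varphi(t)} \cdot \tau) \in R$. This uses the hypothesis only mildly — $\mu_{\varphi(t)} > 0$ guarantees the interval $[0,\mu_{\varphi(t)}]$ is nondegenerate, which is what will matter for the surjectivity claim at the end rather than for well-definedness. Next I would argue continuity of $t \mapsto \mu_{\varphi(t)}$: the function $\lambda \mapsto \mu_\lambda$ is continuous on $\cM$ (this is standard — $\mu$-length is $1$-Lipschitz-like with respect to Frechet distance, in fact $|\mu_\alpha - \mu_\beta|$ is controlled by $\df(\alpha,\beta)$ since each $\mu_n$ is), and $\varphi : I \to \cM$ is continuous by hypothesis, so the composition is continuous. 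Hence $(\tau, t) \mapsto \mu_{\varphi(t)} \cdot \tau$ is continuous on $I^2$ as a product of continuous real functions, and $(\tau,t) \mapsto (\varphi(t), \mu_{\varphi(t)}\cdot\tau)$ is continuous into $\cM \times \rr$; since its image lies in $R$, it is continuous into $R$ with the subspace topology. Composing with the continuous $q$ gives continuity of $\Phi$.

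For the second assertion, fix $t \in I$. By definition $\Phi(\tau, t) = r_{\varphi(t)}(\mu_{\varphi(t)} \cdot \tau)$, and as $\tau$ ranges over $I$, the argument $\mu_{\varphi(t)} \cdot \tau$ ranges over all of $[0, \mu_{\varphi(t)}]$ precisely because $\mu_{\varphi(t)} > 0$. Therefore $\Phi(I \times \{t\}) = r_{\varphi(t)}\bigl([0, \mu_{\varphi(t)}]\bigr) = r_{\varphi(t)}(I')$ where $I' = [0,\mu_{\varphi(t)}]$ is the domain of the $\mu$-parameterization; but the image of the $\mu$-parameterization $r_{\varphi(t)}$ of any representative $\lambda$ of the class $\varphi(t)$ equals $\lambda(I)$ (the $\mu$-parameterization is a reparameterization and $\lambda$ is derived from it, as noted in the text), so this image is exactly $\varphi(t)(I)$, i.e. the support of the class $\varphi(t)$. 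This gives $\Phi(I \times \{t\}) = \varphi(t)(I)$.

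I do not anticipate a serious obstacle here; the statement is essentially a packaging lemma that isolates a continuous ``stacking'' of level curves into a single map on the square. The one point requiring a little care is the continuity of $\lambda \mapsto \mu_\lambda$ on $\cM$ — if the paper wishes to be self-contained one should either cite it explicitly from~\cite{Morse_36} or observe the estimate $|\mu_\alpha - \mu_\beta| \le 2\,\df(\alpha,\beta)$ coming termwise from $|\mu_n(\alpha) - \mu_n(\beta)| \le 2\,\df(\alpha,\beta)$ together with $\sum 2^{-n} = 2$; but since continuity of $q$ on $R$ (which already incorporates the variation of $\mu_\lambda$ through the description of $R$) is quoted from~\cite{Morse_36}, I would simply invoke that and keep the proof short.
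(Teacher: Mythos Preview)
Your proof is correct and follows essentially the same approach as the paper: factor $\Phi$ as a continuous map $I^2 \to R \subset \cM \times \rr$ followed by the continuous evaluation $q$, using continuity of $\lambda \mapsto \mu_\lambda$ from~\cite{Morse_36}. The only cosmetic difference is that the paper routes this through an auxiliary region $K = \{(c,\tau) : c \in I,\ \tau \in [0,\mu_{\varphi(c)}]\} \subset \rr^2$ and invokes Proposition~\ref{prop_f_1} to produce the homeomorphism $G : I^2 \to K$, $G(t,\tau) = (t,\mu_{\varphi(t)}\tau)$, whereas you check continuity of the map $I^2 \to R$ directly.
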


Before we begin to prove Lemma we will check following statement.

\begin{prop}\label{prop_f_1}
Let $[a, b] \subseteq \rr$ and $\alpha$, $\beta : [a, b] \rightarrow \rr$ be such continuous functions that $\alpha(x) < \beta(x)$ for every $x \in [a, b]$. Let
\[
K = \{ (x, y) \in \rr^{2} \,|\, x \in [a, b], y \in [\alpha(x), \beta(x)] \} \,.
\]

Then a mapping $G : [a, b] \times I \rightarrow K$,
\[
G(x, t) = (x, t\beta(x) + (1-t)\alpha(x))
\]
is homeomorphism.
\end{prop}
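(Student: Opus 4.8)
The plan is to prove that $G$ is a homeomorphism by showing it is a continuous bijection from a compact space to a Hausdorff space, since such a map is automatically a homeomorphism. Continuity of $G$ is immediate: each coordinate of $G(x,t) = (x, t\beta(x) + (1-t)\alpha(x))$ is a composition and product of continuous functions (the functions $\alpha$ and $\beta$ are continuous by hypothesis, and multiplication and addition are continuous on $\rr$). The domain $[a,b] \times I$ is compact, and the codomain $K \subseteq \rr^2$ is Hausdorff, so it remains to check that $G$ is a bijection onto $K$.

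First I would check that $G$ maps into $K$: for $(x,t) \in [a,b] \times I$, the second coordinate $y = t\beta(x) + (1-t)\alpha(x)$ is a convex combination of $\alpha(x)$ and $\beta(x)$, hence lies in $[\alpha(x), \beta(x)]$ since $\alpha(x) < \beta(x)$; thus $(x,y) \in K$. Next, surjectivity: given $(x,y) \in K$, so $x \in [a,b]$ and $\alpha(x) \le y \le \beta(x)$, set $t = \frac{y - \alpha(x)}{\beta(x) - \alpha(x)}$, which is well defined because $\beta(x) - \alpha(x) > 0$, and lies in $[0,1]$ by the inequalities on $y$; then $G(x,t) = (x,y)$. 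For injectivity: if $G(x_1,t_1) = G(x_2,t_2)$, comparing first coordinates gives $x_1 = x_2 =: x$, and then comparing second coordinates gives $t_1\beta(x) + (1-t_1)\alpha(x) = t_2\beta(x) + (1-t_2)\alpha(x)$, i.e. $(t_1 - t_2)(\beta(x) - \alpha(x)) = 0$, which forces $t_1 = t_2$ since $\beta(x) - \alpha(x) \neq 0$.

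Having established that $G$ is a continuous bijection from the compact space $[a,b] \times I$ onto the Hausdorff space $K$, I conclude that $G$ is a homeomorphism by the standard fact that a continuous bijection from a compact space to a Hausdorff space is a homeomorphism (its inverse is automatically continuous). There is no real obstacle here; the only point requiring a moment's care is verifying that $K$ is indeed Hausdorff and that $G$ genuinely lands in $K$ (not merely in $\rr^2$), but both are routine given that $\alpha(x) < \beta(x)$ strictly everywhere, which is exactly what prevents the convex-combination parameter $t$ from being ill defined.
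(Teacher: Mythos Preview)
Your proof is correct and follows essentially the same approach as the paper: both establish continuity coordinatewise, verify bijectivity onto $K$, and then invoke the standard fact that a continuous bijection from a compact space to a Hausdorff space is a homeomorphism. Your argument is slightly more explicit about surjectivity and injectivity than the paper's, but the strategy is identical.
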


\begin{proof}
We shall consider $G$ as a mapping $[a, b] \times I \rightarrow \rr^{2}$.

It is known (see~\cite{RF}) that a mapping $\Phi : X \rightarrow \prod_{\alpha} Y_{\alpha}$ is continuous iff a coordinate mapping $\pr_{\alpha} \circ \Phi : X \rightarrow Y_{\alpha}$ is continuous for every $\alpha$.

It is easy to see that coordinate mappings $\pr_{1} \circ G : (x, t) \mapsto x$ and $\pr_{2} \circ G(x, t) = t \beta(x) + (1-t) \alpha(x)$, $(x, t) \in [a, b] \times I$, are continuous since they both can be represented as compositions of continuous mappings. Therefore $G$ is also continuous.

The mapping $G$ is injective. It transforms linearly every segment $\{x\} \times I$ onto a segment $\{x\} \times [\alpha(x), \beta(x)]$. It is clear that the subspace $K$ of the plane $\rr^{2}$ is Hausdorff and $G([a, b] \times I) = K$. The space $[a, b] \times I$ is compact, therefore $G$ is homeomorphism onto its image $K$.
\end{proof}

\begin{proof}[Proof of lemma~\ref{lemma_f_4}]
Let us consider a set
\[
K = \bigcup_{c \in I} \{ (c, \tau) \,|\, \tau \in [0, \mu_{\varphi(c)}] \} \,,
\]
and a mapping $\Psi = \varphi \times Id : K \rightarrow R \subset \cM \times \rr$,
\[
\Psi (c, \tau) = (\varphi(c), \tau), \quad (c, \tau) \in K \,.
\]
It is clear that this mapping is continuous since both projections $pr_{1} = \varphi$ and $pr_{2} = Id$ are continuous.

We consider also a continuous mapping $\theta = q \circ \Psi : K \rightarrow \rr^{2}$, $\theta(c, \tau) = r_{\varphi(c)}(\tau)$, $(c, \tau) \in K$. Obviously, following equalities hold true
\[
\theta(\{c\} \times [0, \mu_{\varphi(c)}]) = r_{\varphi(c)}([0, \mu_{\varphi(c)}]) = \varphi(c)(I) \,.
\]

We denote $\alpha(t) = 0$, $\beta(t) = \mu_{\varphi(t)}$, $t \in I$. It is known (see~\cite{Morse_36}) that a function which associates to a continuous curve $\lambda$ its $\mu$-length $\mu_{\lambda}$ is continuous on the space $\cM$, therefore functions $\alpha$ and $\beta$ are continuous. Moreover, $\alpha(t) < \beta(t)$ for every $t \in I$ by condition of Lemma. We apply Proposition~\ref{prop_f_1} to $K$ and get a homeomorphism $G : I^{2} \rightarrow K$, $G(t, \tau) = (t, \mu_{\varphi(t)} \cdot \tau)$, $(t, \tau) \in I^{2}$ such that $G(\{t\} \times I) = \{t\} \times [0, \mu_{\varphi(c)}]$ for all $t \in I$.

Let us consider also a homeomorphism $T : I^{2} \rightarrow I^{2}$, $T(x, y) = (y, x)$, $(x, y) \in I^{2}$ and a continuous mapping $\Phi = \theta \circ G \circ T : I^{2} \rightarrow \rr^{2}$,
\[
\Phi (\tau, t) = \theta \circ G (t, \tau) = \theta (t, \mu_{\varphi(t)} \cdot \tau) = r_{\varphi(t)}(\mu_{\varphi(t)} \cdot \tau) \,, \quad (\tau, t) \in I^{2} \,.
\]
This mapping complies with the equalities
\[
\Phi(I \times \{t\}) = \theta \circ G (\{t\} \times I) = \theta (\{t\} \times [0, \mu_{\varphi(t)}]) = \varphi(t)(I) \,.
\]
Lemma is proved.
\end{proof}
\section{Rectification of foliations on disk which are induced by regular functions.}

What we said above allows us to prove the following theorem.

\begin{theorem}\label{theorem_f_1}
Let a continuous function $f : I^{2} \rightarrow \rr$ complies with the following conditions:
\begin{itemize}
	\item $f([0, 1] \times \{0\}) = 0$, $f([0, 1] \times \{1\}) = 1$;
	\item every point of the set $\mathring{I}^{2}$ is a regular point of $f$;
	\item all points of a set $\{0, 1\} \times (0, 1)$ are regular boundary points of $f$;
	\item through any point of a subset $\Gamma$ dense in $(0, 1) \times \{0, 1\}$ passes a $U$-trajectory.
\end{itemize}

Then there exists a homeomorphism $H_{f} : I^{2} \rightarrow I^{2}$ such that $H_{f}(z) = z$ for all $z \in I \times \{0, 1\}$ and $f \circ H_{f} (x, y) = y$ for every $(x, y) \in I^{2}$.
\end{theorem}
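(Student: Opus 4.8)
The plan is to realize the desired homeomorphism as a by-product of Lemma~\ref{lemma_f_4}, applied to the family of level curves of $f$. First I would use Corollary~\ref{nas_prop_f_0} to fix, for each $c \in I$, the canonical parametrization $\zeta_c : I \to I^2$ of the level set $f^{-1}(c)$ with $\zeta_c(0) \in \{0\}\times I$ (and $\zeta_c(1)\in\{1\}\times I$ for $c\in(0,1)$, while $\zeta_0$, $\zeta_1$ parametrize the bottom and top edges). This gives a map $\varphi : I \to \cM$ sending $c$ to the class of curves of $\zeta_c$. The key input is Lemma~\ref{lemma_f_3}: since for each $c$ and each $\varepsilon>0$ there is $\delta>0$ with $\df(\zeta_c,\zeta_d)<\varepsilon$ whenever $|c-d|<\delta$, the map $\varphi$ is continuous into the metric space $\cM$ with the Frechet distance. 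Moreover $\mu_{\varphi(c)}>0$ for every $c$, because each $\zeta_c(I)$ is a nondegenerate continuum (it joins the two lateral sides, or is an edge of the square), so its $\mu$-length is positive.

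Next I would invoke Lemma~\ref{lemma_f_4} to obtain a continuous map $\Phi : I^2 \to \rr^2$, $\Phi(\tau,t) = r_{\varphi(t)}(\mu_{\varphi(t)}\cdot\tau)$, with $\Phi(I\times\{t\}) = \varphi(t)(I) = f^{-1}(t)$ for every $t$. I would then set $H_f = \Phi$ and check the three required properties. The equality $f\circ H_f(x,y)=y$ is immediate: for $(\tau,t)\in I^2$ the point $\Phi(\tau,t)$ lies on $f^{-1}(t)$, so $f(\Phi(\tau,t))=t$. For the boundary normalization $H_f(z)=z$ on $I\times\{0,1\}$, note that $\zeta_0$ is a simple parametrization of $I\times\{0\}$ starting at $(0,0)$; its $\mu$-parametrization $r_{\zeta_0}$ is again a simple curve with the same image $I\times\{0\}$ and the same endpoints (derived curves preserve endpoints), hence by uniqueness of simple parametrizations of an arc up to the two orientations, $r_{\zeta_0}(\mu_{\zeta_0}\cdot\tau)=(\tau,0)$ after, if necessary, choosing $\zeta_0(\tau)=(\tau,0)$ at the outset; similarly $r_{\zeta_1}(\mu_{\zeta_1}\cdot\tau)=(\tau,1)$. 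So $\Phi(\tau,0)=(\tau,0)$ and $\Phi(\tau,1)=(\tau,1)$, which is exactly $H_f(z)=z$ on $I\times\{0,1\}$.

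The main obstacle is showing that $\Phi$ is a \emph{homeomorphism}, not merely a continuous surjection; Lemma~\ref{lemma_f_4} only gives continuity. Injectivity splits into two parts. Along a fixed fiber $t$, $\tau\mapsto r_{\varphi(t)}(\mu_{\varphi(t)}\tau)$ is injective because $\zeta_t$ is a \emph{simple} curve, so its $\mu$-parametrization $r_{\varphi(t)}$ is injective on $[0,\mu_{\varphi(t)}]$. Across fibers, $\Phi(\tau,t)$ and $\Phi(\tau',t')$ with $t\neq t'$ lie on disjoint level sets $f^{-1}(t)$ and $f^{-1}(t')$, hence are distinct. Therefore $\Phi$ is a continuous bijection of the compact Hausdorff space $I^2$ onto $\Phi(I^2)$; to conclude it is a homeomorphism of $I^2$ onto $I^2$ I would verify $\Phi(I^2)=I^2$. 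Surjectivity follows since $I^2 = \bigcup_{t\in I} f^{-1}(t) = \bigcup_{t\in I}\Phi(I\times\{t\})$. Finally, a continuous bijection from a compact space to a Hausdorff space is a homeomorphism, so $H_f=\Phi$ is the required homeomorphism. The only subtlety to handle carefully is the initial coherent choice of the parametrizations $\zeta_c$ (so that $\zeta_0$, $\zeta_1$ are the identity on the horizontal edges and the $c\mapsto\zeta_c$ remains Frechet-continuous), but this is a routine normalization using Corollary~\ref{nas_prop_f_0} and the freedom of reparametrization inside each class of curves.
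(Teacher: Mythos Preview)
Your approach is essentially identical to the paper's: define $\varphi(c)=\zeta_c$, use Lemma~\ref{lemma_f_3} for continuity, apply Lemma~\ref{lemma_f_4} to get $\Phi$, and then verify bijectivity fiberwise and across fibers exactly as you do. The paper argues in precisely the same order.

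There is, however, one genuine slip in your boundary-normalization step. You write that ``by uniqueness of simple parametrizations of an arc up to the two orientations, $r_{\zeta_0}(\mu_{\zeta_0}\cdot\tau)=(\tau,0)$''. This is not valid: an arc has infinitely many simple parametrizations (one for each self-homeomorphism of $I$), not just two. Knowing that $\tau\mapsto r_{\zeta_0}(\mu_{\zeta_0}\tau)$ is a simple curve onto $I\times\{0\}$ with the correct endpoints tells you only that it is \emph{some} orientation-preserving homeomorphism $I\to I\times\{0\}$, not that it is $(\tau,0)$. The paper closes this gap by an explicit computation: for a straight segment $\lambda$ of Euclidean length $s$ one has $\mu_n(\lambda)=s/n$, hence $\mu_\lambda=s\cdot\sum_{n\ge1}1/(n2^n)$, and the partial $\mu$-length from $\lambda(0)$ to $\lambda(t)$ is proportional to arclength; therefore $r_\lambda$ is the \emph{linear} parametrization of the segment, and $r_{\zeta_0}(\mu_{\zeta_0}\tau)=(\tau,0)$ exactly. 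You should replace your uniqueness appeal with this computation (or an equivalent one).
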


\begin{proof}
For every value $c \in I$ of the function $f$ we fix a parameterization $\zeta_{c} : I \rightarrow \rr^2$ of the level curve $f^{-1}(c)$ to satisfy equalities $\zeta_{c}(0) \in \{0\} \times I$ (see Corollary~\ref{nas_prop_f_0}).

We consider a mapping $\varphi : I \rightarrow \cM$, $\varphi(c) = \zeta_{c}$, $c \in I$. From Lemma~\ref{lemma_f_3} it follows that this map is continuous. Moreover, it is known (see~\cite{Morse_36}) that for every continuous curve $\lambda$ an inequality $\mu_{\lambda} \geq (\diam \lambda(I))/2$ holds true. Therefore $\mu_{\zeta_{c}} > 0$ for every $c \in I$ and $\varphi$ complies with the condition of Lemma~\ref{lemma_f_4}.

Let $\Phi : I^{2} \rightarrow \rr^{2}$, $\Phi(\tau, t) = r_{\zeta_{t}}(\mu_{\zeta_{t}} \cdot \tau)$, $(\tau, t) \in I^{2}$ is a continuous mapping from Lemma~\ref{lemma_f_4}. Then
\[
\Phi(I^{2}) = \bigcup_{c \in I} \Phi(I \times \{c\}) = \bigcup_{c \in I} \zeta_{c}(I) = \bigcup_{c \in I} f^{-1}(c) = I^{2} \,.
\]

For every simple continuous curve $\zeta_{c}$, $c \in I$, its $\mu$-parametrization $r_{\zeta_{c}}$ is a simple continuous curve, so for every $c \in I$ a mapping
\[
\restrict{\Phi}{I \times \{c\}} : I \times \{c\} \rightarrow \zeta_{c}(I)
\]
is injective. More than that, when $c \neq d$ we obviously have
\[
\Phi(I \times \{c\}) \cap \Phi(I \times \{d\}) = \zeta_{c}(I) \cap \zeta_{d}(I) = f^{-1}(c) \cap f^{-1}(d) = \emptyset \,.
\]
Therefore $\Phi$ is injective mapping. It is known that a continuous injective mapping of compact into a Hausdorff space is a homeomorphism onto its image, hence $\Phi : I^{2} \rightarrow I^{2}$ is homeomorphism.

Let us denote $H_{f} = \Phi$. It is obvious that $H_{f}(x, y) \in \zeta_{y}(I)$ and $\zeta_{y}(I) = f^{-1}(y)$, so $f \circ H_{f}(x, y) = y$ for all $(x, y) \in I^{2}$.

It is straightforward that if a support of a continuous curve $\lambda : I \rightarrow \rr^2$ is a linear segment of the length $s$, then $\mu_{n}(\lambda) = s/n$, $n \in \nn$,
\[
\mu_{\lambda} = \sum_{n \in \nn} \frac{s}{n2^n} = s \cdot S \,, \quad S = \sum_{n \in \nn} \frac{1}{n 2^n} \,,
\]
and $r_{\lambda} : [0, \mu_{\lambda}] \rightarrow \lambda(I)$ maps a segment $[0, \mu_{\lambda}] = [0, s \cdot S]$ linearly onto $\lambda(I)$.

Consequently
\begin{gather*}
H_f(\tau, 0) = \Phi(\tau, 0) = r_{\zeta_{0}}(\mu_{\zeta_{0}} \cdot \tau) = (\tau, 0) \,, \\
H_f(\tau, 1) = \Phi(\tau, 1) = r_{\zeta_{1}}(\mu_{\zeta_{1}} \cdot \tau) = (\tau, 1) \,, \quad \tau \in I \,.
\end{gather*}
So, $H_{f}(z) = z$ for all $z \in I \times \{0, 1\}$.
\end{proof}

\begin{nas}\label{nas_f_1}
Let a continuous function $f : I^{2} \rightarrow \rr$ complies with all conditions of Theorem~\ref{theorem_f_1} except the first one, instead of which the following condition is fulfilled:
\begin{itemize}
	\item $f([0, 1] \times \{0\}) = f_{0}$, $f([0, 1] \times \{1\}) = f_{1}$ for certain $f_{0}$, $f_{1} \in \rr$, $f_{0} \neq f_{1}$.
\end{itemize}

Then there exists a homeomorphism $H_{f} : I^{2} \rightarrow I^{2}$ such that $H_{f}(z) = z$ for all $z \in I \times \{0, 1\}$ and $f \circ H_{f} (x, y) = (1-y) f_{0} + y f_{1}$ for every $(x, y) \in I^{2}$.
\end{nas}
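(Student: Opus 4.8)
The plan is to reduce this corollary to Theorem~\ref{theorem_f_1} by an affine change of target coordinate. First I would define the auxiliary function $\widetilde{f} = L \circ f$, where $L : \rr \to \rr$ is the unique affine bijection sending $f_0$ to $0$ and $f_1$ to $1$, namely $L(s) = (s - f_0)/(f_1 - f_0)$ (this is well defined precisely because $f_0 \neq f_1$). Since $L$ is a homeomorphism of $\rr$ onto $\rr$, composing with it preserves all the relevant structure: $\widetilde{f}$ is continuous; a point is a regular point of $\widetilde{f}$ iff it is a regular point of $f$, and likewise for regular boundary points, because Definitions~\ref{ozn_f_1} and~\ref{ozn_f_2} only constrain $f$ up to an additive constant and the monotonicity requirement in Definition~\ref{ozn_f_2} is insensitive to the affine reparametrization $L$ (which is itself strictly monotone); and $\gamma$ is a $U$-trajectory for $\widetilde{f}$ iff it is one for $f$, since $\widetilde{f} \circ \gamma = L \circ (f \circ \gamma)$ is strictly monotone iff $f \circ \gamma$ is. One must be slightly careful that $L$ could be orientation-reversing (when $f_1 < f_0$), but strict monotonicity of a composition is preserved by post-composition with any strictly monotone map, so this causes no trouble.

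Next I would verify that $\widetilde{f}$ satisfies all four hypotheses of Theorem~\ref{theorem_f_1}. The last three carry over verbatim by the remarks just made (same regular points, same regular boundary points, same dense set $\Gamma$ of points through which $U$-trajectories pass). For the first hypothesis: $\widetilde{f}([0,1] \times \{0\}) = L(f_0) = 0$ and $\widetilde{f}([0,1] \times \{1\}) = L(f_1) = 1$, as required. Applying Theorem~\ref{theorem_f_1} to $\widetilde{f}$ then yields a homeomorphism $H_f : I^2 \to I^2$ with $H_f(z) = z$ for all $z \in I \times \{0,1\}$ and $\widetilde{f} \circ H_f(x,y) = y$ for all $(x,y) \in I^2$.

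Finally I would translate this back into a statement about $f$. From $\widetilde{f} \circ H_f(x,y) = y$ and $\widetilde{f} = L \circ f$ we get $L(f \circ H_f(x,y)) = y$, hence $f \circ H_f(x,y) = L^{-1}(y)$. Since $L^{-1}(y) = (1-y) f_0 + y f_1$ (the affine map sending $0 \mapsto f_0$, $1 \mapsto f_1$, which is indeed the inverse of $L$), this gives exactly $f \circ H_f(x,y) = (1-y) f_0 + y f_1$ for every $(x,y) \in I^2$, and the boundary condition $H_f(z) = z$ on $I \times \{0,1\}$ is inherited directly. This completes the proof.

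I do not expect any genuine obstacle here: the entire content is that Theorem~\ref{theorem_f_1} is stated for the normalized values $0$ and $1$ purely for convenience, and an affine rescaling of the codomain is transparent to every notion involved (regular points, regular boundary points, $U$-trajectories, level sets). The only point requiring a word of care — and the closest thing to a subtlety — is confirming that the defining conditions for regular and regular boundary points are invariant under replacing $f$ by $L \circ f$ for an arbitrary affine bijection $L$ of $\rr$; this is immediate once one observes that these definitions pin $f$ down only modulo an additive constant together with (in the boundary case) a strict-monotonicity condition stable under affine reparametrization of the target.
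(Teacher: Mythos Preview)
Your argument is correct and is essentially identical to the paper's own proof: the paper also post-composes $f$ with the affine homeomorphism $h(t) = (t-f_0)/(f_1-f_0)$, applies Theorem~\ref{theorem_f_1} to $\tilde{f} = h \circ f$, and then reads off $f \circ H_{\tilde{f}}(x,y) = h^{-1}(y) = (1-y)f_0 + y f_1$. One tiny wording caveat: Definition~\ref{ozn_f_1} is not literally ``up to an additive constant'' (the scaling in $L$ matters too), but the conclusion you draw is correct since a rescaling (and, if $f_1<f_0$, a reflection) of the chart $\varphi$ restores the canonical form; the paper simply asserts this step as ``clear''.
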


\begin{proof}
Let us consider a homeomorphism $h : \rr \rightarrow \rr$,
\[
h(t) = \frac{t-f_0}{f_1-f_0} \,.
\]
An inverse mapping $h^{-1} : \rr \rightarrow \rr$ is given by a relation $h^{-1}(\tau) = (f_1-f_0) \tau + f_0 = \tau f_1 + (1-\tau) f_0$.

It is clear that a function $\tilde{f} = h \circ f$ satisfies condition of Theorem~\ref{theorem_f_1}. Therefore there exists a homeomorphism $H_{\tilde{f}} : I^2 \rightarrow I^2$ which fixes top and bottom sides of the square and such that $\tilde{f} \circ H_{\tilde{f}}(x, y) = y$, $(x, y) \in I^2$. Then $f \circ H_{\tilde{f}} (x, y) = h^{-1} \circ \tilde{f} \circ H_{\tilde{f}} (x, y) = h^{-1}(y) = y f_1 + (1-y) f_0$, $(x, y) \in I^2$ and the mapping $H_{f} = H_{\tilde{f}}$ complies with the condition of Corollary.
\end{proof}

We shall need the following lemma.

\begin{lem}\label{lemma_f_5}
Let $[a, b] \in \rr$ and $\alpha$, $\beta : [a, b] \rightarrow \rr$ are such continuous functions that $\alpha(t) < \beta(t)$ for every $t \in [a, b]$. Let
\begin{gather*}
K = \{ (x, y) \in \rr^{2} \,|\, y \in [a, b], x \in [\alpha(y), \beta(y)] \} \,, \\
\mathring{K} = \{ (x, y) \in \rr^{2} \,|\, y \in (a, b), x \in (\alpha(y), \beta(y)) \} \,.
\end{gather*}
Suppose that a continuous function $f : K \rightarrow \rr$ satisfies following conditions:
\begin{itemize}
	\item $f([\alpha(a), \beta(a)] \times \{a\}) = f_0$, $f([\alpha(b), \beta(b)] \times \{b\}) = f_1$ for some $f_0 \neq f_1$;
	\item every point of the set $\mathring{K}$ is regular point of $f$;
	\item all points of the set $\bigl\{ (x, y) \,|\, y \in (a, b), x \in \{\alpha(y), \beta(y)\} \bigr\}$ are regular boundary points of $f$;
	\item through any point of a set $\Gamma$ dense in  $\bigl((\alpha(a), \beta(a)) \times \{a\}\bigr) \cup \bigl((\alpha(b), \beta(b)) \times \{b\}\bigr)$ passes an $U$-trajectory.
\end{itemize}

Then there exists a homeomorphism $H_{f} : K \rightarrow K$ such that $H_{f}(z) = z$ for all $z \in \bigl([\alpha(a), \beta(a)] \times \{a\}\bigr) \cup \bigl([\alpha(b), \beta(b)] \times \{b\}\bigr)$ and
\[
f \circ H_{f} (x, y) = \bigl((b-y)f_0 + (y-a)f_1\bigr)/(b-a)
\]
for every $(x, y) \in K$.
\end{lem}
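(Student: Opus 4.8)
The plan is to reduce Lemma~\ref{lemma_f_5} to Corollary~\ref{nas_f_1} by straightening the curvilinear strip $K$ to the standard square $I^2$ via an explicit homeomorphism, transporting $f$ through this homeomorphism, and then pulling back the straightening map provided by the corollary. First I would construct a homeomorphism $\Lambda : I^2 \to K$ that respects the horizontal slices. The natural choice is the affine-in-each-variable map
\[
\Lambda(u, v) = \bigl(u\,\beta((b-a)v + a) + (1-u)\,\alpha((b-a)v + a),\; (b-a)v + a\bigr),
\]
which by an argument identical to Proposition~\ref{prop_f_1} (coordinate-wise continuity, injectivity, compact domain, Hausdorff target, and $\Lambda(I^2) = K$) is a homeomorphism. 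Note $\Lambda$ sends $I \times \{0\}$ onto $[\alpha(a),\beta(a)] \times \{a\}$ and $I \times \{1\}$ onto $[\alpha(b),\beta(b)] \times \{b\}$, and for each fixed $v$ it maps the slice $I \times \{v\}$ affinely onto the horizontal segment $[\alpha(y),\beta(y)] \times \{y\}$ with $y = (b-a)v + a$.

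Next I would set $\tilde f = f \circ \Lambda : I^2 \to \rr$ and verify that $\tilde f$ satisfies the hypotheses of Corollary~\ref{nas_f_1} with $\tilde f_0 = f_0$, $\tilde f_1 = f_1$. The boundary values on $I \times \{0, 1\}$ are immediate from the corresponding property of $f$ on the top and bottom of $K$. Since $\Lambda$ restricts to a homeomorphism $\mathring I^2 \to \mathring K$ (one checks the image of the open square is exactly $\mathring K$) and regularity of an interior point is a purely local topological-conjugacy notion (Definition~\ref{ozn_f_1}), every point of $\mathring I^2$ is a regular point of $\tilde f$: if $\varphi : U \to \Int{D^2}$ is a canonical chart for $f$ at $\Lambda(z_0)$, then $\varphi \circ \Lambda$ is one for $\tilde f$ at $z_0$. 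The same reasoning applied to Definition~\ref{ozn_f_2} shows that points of $\{0,1\} \times (0,1)$, which $\Lambda$ carries onto the lateral boundary $\{(x,y) : y \in (a,b),\, x \in \{\alpha(y),\beta(y)\}\}$, are regular boundary points of $\tilde f$; here one must check that $\Lambda$ sends a neighbourhood of such a point in $I^2$ to a neighbourhood in $K$ and preserves the local structure of the boundary, which follows from $\Lambda$ being a homeomorphism carrying $\{0,1\} \times (0,1)$ onto the lateral boundary arcs. Finally, $\Lambda^{-1}(\Gamma)$ is dense in $(0,1) \times \{0,1\}$ because $\Lambda$ restricted to the top and bottom edges is a homeomorphism onto the corresponding edges of $K$, and the image of a $U$-trajectory through a point of $\Gamma$ under $\Lambda^{-1}$ is again a $U$-trajectory (Definition~\ref{ozn_f_3}: $\tilde f \circ (\Lambda^{-1} \circ \gamma) = f \circ \gamma$ is still strictly monotone), so a $U$-trajectory passes through each point of the dense set $\Lambda^{-1}(\Gamma)$.

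Then Corollary~\ref{nas_f_1} yields a homeomorphism $H_{\tilde f} : I^2 \to I^2$ fixing $I \times \{0, 1\}$ pointwise with $\tilde f \circ H_{\tilde f}(u, v) = (1-v)f_0 + v f_1$. I would define $H_f = \Lambda \circ H_{\tilde f} \circ \Lambda^{-1} : K \to K$, a composition of homeomorphisms hence a homeomorphism. On the top and bottom edges $\Lambda^{-1}$ maps into $I \times \{0,1\}$, $H_{\tilde f}$ is the identity there, and $\Lambda$ undoes $\Lambda^{-1}$, so $H_f(z) = z$ for all $z$ in those edges. For the functional equation, write $\Lambda^{-1}(x, y) = (u, v)$ with $v = (y - a)/(b - a)$; then
\[
f \circ H_f(x, y) = \tilde f \circ H_{\tilde f} \circ \Lambda^{-1}(x,y) = \tilde f \circ H_{\tilde f}(u, v) = (1-v)f_0 + v f_1 = \frac{(b-y)f_0 + (y-a)f_1}{b-a},
\]
as required.

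The main obstacle I anticipate is the bookkeeping in the second paragraph: carefully confirming that $\Lambda$ (and $\Lambda^{-1}$) map the four distinguished parts of the boundary of $I^2$ exactly onto the corresponding parts of the boundary of $K$, that interior goes to interior, and that all three local notions (regular point, regular boundary point, $U$-trajectory) are genuinely transported by a homeomorphism of the ambient sets. Each of these is routine once stated correctly, but one must be attentive that the canonical-neighbourhood homeomorphisms of Definitions~\ref{ozn_f_1} and~\ref{ozn_f_2} compose correctly with $\Lambda$ and that the monotonicity clause in Definition~\ref{ozn_f_2} survives — which it does, since precomposing a strictly monotone function on a boundary arc with a homeomorphism of that arc (orientation-preserving or reversing) is again strictly monotone.
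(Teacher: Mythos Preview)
Your proof is correct and follows essentially the same approach as the paper. The paper builds the straightening homeomorphism $F : K \to I^2$ as a composition $L \circ T \circ G \circ T$ (transpose, apply Proposition~\ref{prop_f_1}, transpose back, then rescale the $y$-coordinate), but unwinding that composition gives exactly your $\Lambda^{-1}$; the paper then defines $H_f = F^{-1} \circ H_{\hat f} \circ F$, which is your $\Lambda \circ H_{\tilde f} \circ \Lambda^{-1}$. Your more explicit discussion of why regularity, boundary regularity, and $U$-trajectories transport under $\Lambda$ is a welcome elaboration of what the paper dismisses as ``a straightforward verification''.
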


\begin{proof}
Let $T : I^2 \rightarrow I^2$, $T(x, y) = (y, x)$, $(x, y) \in \rr^2$. Let us designate by $\pr_1$, $\pr_2 : \rr^2 \rightarrow \rr$ projections on corresponding coordinates.

We consider a set $K^T = \{ (x, y) \,|\, T(x, y) \in K \}$ and use Proposition~\ref{prop_f_1} to map it onto a rectangle $[a, b] \times I$ with the help of a homeomorphism $G$. Note that on construction $\pr_1 \circ G(x, y) = x$, $(x, y) \in K^T$.

Let us examine a homeomorphism $\hat{G} = T \circ G \circ T : K \rightarrow I \times [a, b]$ and a linear homeomorphism $L : I \times [a, b] \rightarrow I^2$, $L(x, y) = (x, (y-a)/(b-a))$, $(x, y) \in I \times [a, b]$. Denote $F = L \circ \hat{G} : K \rightarrow I^2$.
Clearly $F$ is homeomorphism. It is easy to see that $\pr_2 \circ \hat{G}(x, y) = y$, $(x, y) \in K$, hence $\pr_{2} \circ F(x, y) = (y-a)/(b-a)$ for every $(x, y) \in K$.

Consider a continuous function $\hat{f} = f \circ F^{-1} : I^2 \rightarrow \rr$. A straightforward verification shows that $\hat{f}$ complies with condition of Corollary~\ref{nas_f_1}, therefore there exists a homeomorphism $H_{\hat{f}} : I^2 \rightarrow I^2$ which is identity on the set $I \times \{0, 1\}$ and such that $\hat{f} \circ H_{\hat{f}}(x, y) = f_1 y + f_0 (1-y)$ for all $(x, y) \in I^2$.

Let us denote $H_{f} = F^{-1} \circ H_{\hat{f}} \circ F : K \rightarrow K$. It is easy to see that
\[
F\Bigl(\bigl([\alpha(a), \beta(a)] \times \{a\}\bigr) \cup \bigl([\alpha(b), \beta(b)] \times \{b\}\bigr)\Bigr) = I \times \{0, 1\} \,,
\]
therefore form Corollary~\ref{nas_f_1} it follows that $H_{f}(z) = F^{-1} \circ H_{\hat{f}} \circ F(z) = F^{-1} \circ F(z) = z$ for every $z \in \bigl([\alpha(a), \beta(a)] \times \{a\}\bigr) \cup \bigl([\alpha(b), \beta(b)] \times \{b\}\bigr)$.

Moreover, for every $(x, y) \in K$ we have $f \circ H_{f}(x, y) = f \circ F^{-1} \circ H_{\hat{f}} \circ F(x, y) = \hat{f} \circ H_{\hat{f}} \circ F(x, y) = f_1 \tau + f_0 (1-\tau)$, where $\tau = \pr_2 \circ F(x, y) = (y-a)/(b-a)$. Taking into account an equality $1 - \tau = (b-y)/(b-a)$, finally we obtain
\[
f \circ H_f (x, y) = \frac{(y-a)f_1 + (b-y)f_2}{b-a} \,, \quad (x, y) \in K \,.
\]
Q. E. D.
\end{proof}

Let us introduce following notation: $a_{-} = (-1, 0)$, $a_{+} = (1, 0)$,
\begin{gather*}
\Cl{D}^{2}_{+} = \{ z \;|\; |z| \leq 1 \mbox{ and } Im z \geq 0 \} \,,\quad
\mathring{D}^{2}_{+} = \{ z \;|\; |z| < 1 \mbox{ and } Im z > 0 \} \,, \\
S_{+} = \{ z \;|\; |z| = 1 \mbox{ and } Im z \geq 0 \} \,, \quad
\mathring{S}_{+} = S_{+} \setminus \{ a_{-}, a_{+} \} \,.
\end{gather*}

\begin{theorem}\label{theorem_f_2}
Let a continuous function $f : \Cl{D}^{2}_{+} \rightarrow \rr$ complies with conditions:
\begin{itemize}
	\item every point of the set $\mathring{D}^{2}_{+}$ is a regular point of $f$;
	\item a certain point $v \in \mathring{S}_{+}$ is local maximum of $f$, all the rest points of $\mathring{S}_{+}$ are regular boundary points of $f$;
	\item $f([-1, 1] \times \{0\}) = 0$, $f(v) = 1$;
	\item through every point of a set $\Gamma$ which is dense in $(0, 1) \times \{0, 1\}$ passes an $U$-trajectory.
\end{itemize}

Then there exists a homeomorphism $H_{f} : \Cl{D}^{2}_{+} \rightarrow \Cl{D}^{2}_{+}$ such that $H_{f}(z) = z$ for all $z \in [-1, 1] \times \{0\}$ and $f \circ H_{f} (x, y) = y$ for every $(x, y) \in \Cl{D}^{2}_{+} \subset \rr^2$.
\end{theorem}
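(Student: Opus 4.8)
The plan is to recognise $f$ as a weakly regular function on the disk $\Cl{D}^{2}_{+}$ and then to reproduce, with one essential modification, the $\mu$-length construction from the proofs of Lemma~\ref{lemma_f_4} and Theorem~\ref{theorem_f_1}. \emph{Step 1 (weak regularity and the level sets).} I would mark the boundary points $z_{1}=a_{+}$, $z_{2}=z_{3}=v$, $z_{4}=a_{-}$ in the positive bypass order of $\partial\Cl{D}^{2}_{+}$, so that $\gamma_{1}$, $\gamma_{3}$ are the two subarcs of $S_{+}$ into which $v$ divides it, $\gamma_{2}=\{v\}$ is degenerate, and $\gamma_{4}=[-1,1]\times\{0\}$ is the diameter. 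Since neither a regular point nor a regular boundary point can be a point of local extremum of $f$, on the compact $\Cl{D}^{2}_{+}$ the maximum of $f$ is attained only at $v$ and the minimum only on the diameter; hence $f(\Cl{D}^{2}_{+})=[0,1]$, $f^{-1}(1)=\{v\}$, $f^{-1}(0)=[-1,1]\times\{0\}$. A continuous function that is locally injective on an interval is strictly monotone, so $f$ is strictly monotone on each of $\gamma_{1}$, $\gamma_{3}$; thus $f$ is weakly regular on $\Cl{D}^{2}_{+}$ with $\mathcal{N}(f)=2$. From Lemma~\ref{lem_prop_of_weak_reg}, together with the fact that $f$ maps each of $\gamma_{1}$, $\gamma_{3}$ homeomorphically onto $[0,1]$, it follows that for every $c\in(0,1)$ the level set $f^{-1}(c)$ is a single simple arc $\zeta_{c}\colon I\to\Cl{D}^{2}_{+}$ with one endpoint in $\mathring{\gamma}_{3}$, the other in $\mathring{\gamma}_{1}$, and all remaining points in $\mathring{D}^{2}_{+}$. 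I normalise so that $\zeta_{c}(0)$ is the endpoint on $\gamma_{3}$, take $\zeta_{0}$ to be the affine parametrisation of the diameter with $\zeta_{0}(0)=a_{-}$, and let $\zeta_{1}$ be the constant curve at $v$.

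\emph{Step 2 (a continuous pinched family of level curves).} Next I would verify that $c\mapsto\zeta_{c}$, regarded as a map $I\to\cM$, is continuous. On $[0,1)$ this is obtained by rerunning the proofs of Lemmas~\ref{lemma_f_2} and~\ref{lemma_f_3} with $\Cl{D}^{2}_{+}$ in place of $I^{2}$, the diameter and the point $v$ playing the roles of the two horizontal sides of the square and $\gamma_{1}$, $\gamma_{3}$ the roles of its lateral sides; all ingredients of those proofs are at hand (the level-set description just obtained; canonical neighbourhoods; $U$-trajectories through regular points, through regular boundary points, and through the dense set $\Gamma$ on the diameter), and the subcase "$d=1$" is vacuous because $f^{-1}(1)$ is a single point. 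Continuity at $c=1$ needs no analogue of Lemma~\ref{lemma_f_3}: by Proposition~\ref{prop_func_on_compact} the sets $f^{-1}([1-\varepsilon,1])$ form a neighbourhood base of $\{v\}$, so $\df(\zeta_{c},\zeta_{1})=\max_{z\in f^{-1}(c)}\dist(z,v)\to0$ as $c\to1$. Since $\mu_{\zeta_{c}}\ge(\diam f^{-1}(c))/2>0$ for $c<1$ while $\mu_{\zeta_{1}}=0$, the function $c\mapsto\mu_{\zeta_{c}}$ is continuous, positive on $[0,1)$ and zero at $1$. Put $K=\{(c,\tau)\mid c\in I,\ 0\le\tau\le\mu_{\zeta_{c}}\}$ (a compact subset of $\rr^{2}$) and consider the continuous map $\theta\colon K\to\rr^{2}$, $\theta(c,\tau)=r_{\zeta_{c}}(\tau)$ — this is the composition $q\circ(\varphi\times Id)$ from the proof of Lemma~\ref{lemma_f_4} with $\varphi(c)=\zeta_{c}$. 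Each fibre $\{c\}\times[0,\mu_{\zeta_{c}}]$ is carried onto $\zeta_{c}(I)=f^{-1}(c)$, and injectively (because $r_{\zeta_{c}}$ is a simple curve for $c<1$ and the fibre reduces to a point for $c=1$); distinct fibres have disjoint images since distinct level sets are disjoint. Hence $\theta$ is a continuous bijection of the compact $K$ onto $\bigcup_{c\in I}f^{-1}(c)=\Cl{D}^{2}_{+}$, i.e. a homeomorphism.

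\emph{Step 3 (straightening the base and assembling $H_{f}$).} Finally I would build a homeomorphism $G\colon\Cl{D}^{2}_{+}\to K$ carrying each horizontal chord $\{y=s\}\cap\Cl{D}^{2}_{+}$ affinely onto the fibre $\{s\}\times[0,\mu_{\zeta_{s}}]$ of $K$, sending the left endpoint of the chord to $\tau=0$, with $G(0,1)=(1,0)$; this is a "pinched" version of Proposition~\ref{prop_f_1}, continuous at $(0,1)$ because $\mu_{\zeta_{y}}\to0$ as $y\to1$, and a homeomorphism because it is a continuous bijection of a compact space onto a Hausdorff one. Then $H_{f}:=\theta\circ G\colon\Cl{D}^{2}_{+}\to\Cl{D}^{2}_{+}$ is a homeomorphism; for every $(x,y)$ one has $H_{f}(x,y)\in\zeta_{y}(I)=f^{-1}(y)$, so $f\circ H_{f}(x,y)=y$, and along the diameter $\zeta_{0}$ is the straight segment from $a_{-}$ to $a_{+}$, so $r_{\zeta_{0}}$ is affine and $H_{f}(x,0)=(x,0)$, i.e. $H_{f}$ is the identity on $[-1,1]\times\{0\}$.

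\emph{Expected main obstacle.} The crux is the degeneration of the level set at $c=1$, which blocks a direct appeal to Lemma~\ref{lemma_f_4} (whose proof requires $\mu_{\varphi(t)}>0$ everywhere). I expect this to be overcome precisely by the observation that $\zeta_{1}$ collapses to the single point $v$, so that both $\theta$ and $G$ remain global homeomorphisms despite the pinch of their common domain $K$ at $c=1$. Two lesser points still need care: transcribing Lemmas~\ref{lemma_f_2}--\ref{lemma_f_3} from the square to the half-disk, and the choice of orientation in the normalisation of the $\zeta_{c}$, which must be made so that $H_{f}$ restricts to the identity — and not the reflection — on the diameter.
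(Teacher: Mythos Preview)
Your proposal is correct, and it takes a genuinely different route from the paper. The paper does \emph{not} extend the global $\mu$-length construction to the half-disk; instead it chooses an increasing sequence $0=c_{0}<c_{1}<\ldots\nearrow 1$ with $f^{-1}([c_{k},1])\subset U_{1/k}(v)$, cuts $\Cl{D}^{2}_{+}$ into infinitely many curvilinear quadrangles $J_{k}=f^{-1}([c_{k},c_{k+1}])$, maps each $J_{k}$ by a Schoenflies extension $\Phi_{k}$ onto the horizontal slab $I_{k}=\{c_{k}\le y\le c_{k+1}\}\cap\Cl{D}^{2}_{+}$, straightens $f\circ\Phi_{k}^{-1}$ on each $I_{k}$ via Lemma~\ref{lemma_f_5} (i.e.\ the square result as a black box), and then glues the pieces $\Phi_{k}^{-1}\circ H_{k}$, checking continuity at the apex $(0,1)$ by hand from the choice of the $c_{k}$. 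Your argument bypasses both the Schoenflies step and the infinite gluing: you let the parameter domain $K$ pinch to a point at $c=1$ and observe that the continuity of $q$ on $\cM\times\rr$ together with $\zeta_{c}\to\zeta_{1}$ in $\cM$ already forces $\theta$ to be continuous there, so a single compact-to-Hausdorff bijection argument suffices. The paper's approach is more modular --- it recycles the square theorem verbatim --- while yours is more economical and shows that the $\mu$-parametrisation machinery handles the degenerate level set without any auxiliary decomposition. The points you flag as needing care (the transcription of Lemmas~\ref{lemma_f_2}--\ref{lemma_f_3} to the half-disk for $c\in[0,1)$, and the orientation of $\zeta_{0}$ so that $H_{f}$ fixes rather than reflects the diameter) are real but routine, exactly as you describe.
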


\begin{proof}
We designate by $\gamma_{-}$ and $\gamma_{+}$ close arcs which are contained in $S_{+}$ and join with $v$ points $a_{-}$ and $a_{+}$ respectively. Let $\mathring{\gamma}_{-} = \gamma_{-} \setminus \{a_{-}, v\}$ and $\mathring{\gamma}_{+} = \gamma_{+} \setminus \{a_{+}, v\}$ are corresponding open arcs. It is clear that on each of the arcs $\gamma_{-}$ and $\gamma_{+}$ function $f$ changes strictly monotonously from 0 to 1.

Similarly to Proposition~\ref{prop_f_0} we prove that $f$ is weakly regular on $\Cl{D}^{2}_{+}$. Like in Corollary~\ref{nas_prop_f_0} from this follows that $f(z) \in (0, 1)$ for all $z \in \Cl{D}^{2}_{+} \setminus \bigl(([-1, 1] \times \{0\}) \cup \{v\}\bigr)$ and for every $c \in (0, 1)$ a level set $f^{-1}(c)$ is a support of a simple continuous curve $\zeta_c : I \rightarrow \Cl{D}^{2}_{+}$, with $\zeta_c(0) \in \mathring{\gamma}_{-}$, $\zeta_c(1) \in \mathring{\gamma}_{+}$ and $\zeta_c(t) \in \mathring{D}^{2}_{+}$ when $t \in (0, 1)$.

We apply Proposition~\ref{prop_func_on_compact} to a level set $f^{-1}(1) = \{v\}$ and find an increasing sequence of numbers $0 = c_0 < c_1 < c_2 < \ldots < 1$, $\lim_{k \rightarrow \infty} c_k = 1$, which satisfies the following requirement: $f^{-1}(c) \subset U_{1/k}(v)$ for all $c \geq c_k$, $k \in \nn$. Here $U_{\varepsilon}(v) = \{z \in \Cl{D}^{2}_{+} \,|\, \dist(z, v) < \varepsilon \}$ is a $\varepsilon$-neighbourhood of $v$.

Let $\tilde{\zeta}_k = \zeta_{c_k} : I \rightarrow \Cl{D}^{2}_{+}$ be simple continuous curves with supports $f^{-1}(c_k)$, $k \in \nn$. Let also $\tilde{\zeta}_0 : I \rightarrow f^{-1}(0) = [-1, 1] \times \{0\} \subset \Cl{D}^{2}_{+}$, $f(t) = (2t-1, 0)$. We denote $a_{-}^k = \tilde{\zeta}_k(0) \in \mathring{\gamma}_{-}$, $a_{+}^k = \tilde{\zeta}_k(1) \in \mathring{\gamma}_{+}$ (see above), $a_{-}^0 = a_{-}$, $a_{+}^0 = a_{+}$.

Let $\gamma_{-}^k : I \rightarrow \gamma_{-}$, $k \in \nn$, be simple continuous curves such that $\gamma_{-}^k(0) = a_{-}^k$, $\gamma_{-}^k(1) = a_{-}^{k+1}$. By analogy we fix simple continuous curves $\gamma_{+}^k : I \rightarrow \gamma_{+}$ such that $\gamma_{+}^k(0) = a_{+}^k$, $\gamma_{+}^k(1) = a_{+}^{k+1}$.

We also designate $b_{-}^{k} = (-\sqrt{1-c_k^2}, c_k)$, $b_{+}^{k} = (\sqrt{1-c_k^2}, c_k) \in S_{+}$, $k \geq 0$.

For every $k \geq 0$ we fix three continuous injective mappings $\varphi_k : \tilde{\zeta}_k(I) \rightarrow \left[-\sqrt{1-c_k^2}, \sqrt{1-c_k^2}\right] \times \{c_k\}$, $\psi_{-}^k : \gamma_{-}^k(I) \rightarrow S_{+}$ and $\psi_{+}^k : \gamma_{+}^k(I) \rightarrow S_{+}$, which satisfy requirements: $\varphi_k(0) = \psi_{-}^k(0) = b_{-}^{k}$, $\varphi_k(1) = \psi_{+}^k(0) = b_{+}^{k}$, $\psi_{-}^k(1) = b_{-}^{k+1}$, $\psi_{+}^k(1) = b_{+}^{k+1}$. We can regard that $\varphi_0 = id : [-1, 1] \times \{0\} \rightarrow [-1, 1] \times \{0\}$ is an identity mapping.

Let us consider following simple continuous curves
\begin{gather*}
\xi_k = \varphi_k \circ \tilde{\zeta}_k : I \rightarrow \left[-\sqrt{1-c_k^2}, \sqrt{1-c_k^2}\right] \times \{c_k\} \subset \Cl{D}^{2}_{+} \,, \\
\eta_{-}^k = \psi_{-}^k \circ \gamma_{-}^{k} \,, \; \eta_{+}^k = \psi_{+}^k \circ \gamma_{+}^{k} : I \rightarrow S_{+} \,,
\quad k \geq 0 \,.
\end{gather*}
Let $J_k$ be a curvilinear rectangle bounded by curves $\gamma_{-}^{k}$, $\tilde{\zeta}_k$, $\gamma_{+}^{k}$ and $\tilde{\zeta}_{k+1}$, and $I_k = \left\{ (x, y) \,|\, y \in [c_k, c_{k+1}], x \in \left[-\sqrt{1-y^2}, \sqrt{1-y^2}\right] \right\}$ be a curvilinear rectangle bounded by curves $\eta_{-}^k$, $\xi_k$, $\eta_{+}^k$ and $\xi_{k+1}$. It is straightforward that the mappings $\psi_{-}^k$, $\varphi_k$, $\psi_{+}^k$ and $\varphi_{k+1}$ induce a homeomorphism $\Phi_{k}^0 : \partial J_k \rightarrow \partial I_k$ of a boundary $\partial J_k$ of the set $J_k$ onto a boundary $\partial I_k$ of $I_k$, moreover on the set $\tilde{\zeta}_k(I) = \partial J_{k-1} \cap \partial J_k$ mappings $\Phi_{k-1}^0$ and $\Phi_{k}^0$ coincide for every $k \in \nn$.

\begin{figure}
\centerline{\includegraphics{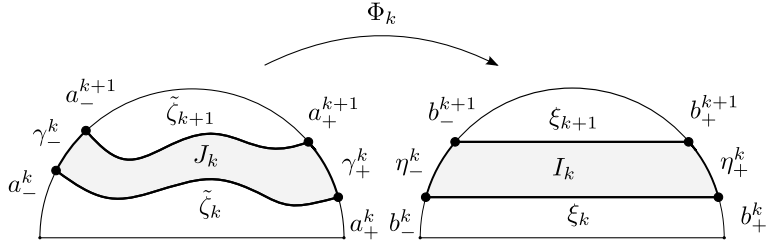}}
\caption{A homeomorphism $\Phi_k : J_k \rightarrow I_k$}\label{Fig_half_disks}
\end{figure}

We use theorem of Shoenflies (see~\cite{Z-F-C, Newman}) and for every $k \geq 0$ continue the mapping $\Phi_k^0$ to a homeomorphism $\Phi_k : J_k \rightarrow I_k$ (see Figure~\ref{Fig_half_disks}). Remark that by construction homeomorphisms $\Phi_{k-1}$ and $\Phi_k$ coincide on a set $\tilde{\zeta}_k(I) = J_{k-1} \cap J_k$ for all $k \in \nn$.

For every $k \geq 0$ we consider a function $f \circ \Phi_k^{-1} : I_{k} \rightarrow \rr$. A straightforward verification shows that this functions complies with the condition of Lemma~\ref{lemma_f_5} with $f_0 = c_k$ and $f_1 = c_{k+1}$. Therefore there exists a homeomorphism $H_k : I_k \rightarrow I_k$ which is an identity on a set $\xi_k(I) \cup \xi_{k+1}(I)$ and such that
\[
f \circ \Phi_k^{-1} \circ H_k(x, y) = \frac{(c_{k+1}-y)c_k + (y-c_k)c_{k+1}}{c_{k+1}-c_k} = y \,, \quad y \in I_k \,.
\]

It is obvious that by construction homeomorphisms $\Phi_{k-1}^{-1} \circ H_{k-1}$ and $\Phi_k^{-1} \circ H_k$ coinside on the set $\xi_k(I) = I_{k-1} \cap I_k$ for every $k \in \nn$. Therefore we can define a mapping $H_f : \Cl{D}^{2}_{+} \rightarrow \Cl{D}^{2}_{+}$,
\[
H_f(x, y) =
\begin{cases}
\Phi_k^{-1} \circ H_k(x, y) \,, & \mbox{if } y \in [c_k, c_{k+1}] \,; \\
v \,, & \mbox{if } (x, y) = (0, 1) \,,
\end{cases}
\]
and by construction it satisfies the relation $f \circ H_f(x, y) = y$, $(x, y) \in \Cl{D}^{2}_{+}$.

It is easy to see that this mapping is bijective. Moreover $H_f(z) = \varphi_0^{-1}(z) = z$ when $z \in [-1, 1] \times \{0\}$.
The set $\Cl{D}^{2}_{+}$ is compact, so for completion of the proof it is sufficient to verify continuity of $H_f$.

Let us consider the set $\tilde{D}_{+} = \Cl{D}^{2}_{+} \setminus \{(0, 1)\}$ and its covering $\{I_k\}_{k \geq 0}$. This covering is locally finite and close, so it is fundamental (see~\cite{RF}). Moreover by construction all mappings $H_f|_{I_k} = \Phi_k^{-1} \circ H_k$ are continuous. Consequently, the mapping $H_f$ is also continuous on $\tilde{D}_{+}$.

In order to prove the continuity of $H_f$ in the point $(0, 1)$ we observe that a family of sets
\[
W_k = \{(x, y) \in \Cl{D}^{2}_{+} \,|\, y > c_k\} = \{(x, y) \in \Cl{D}^{2}_{+} \,|\, f \circ H_f(x, y) > c_k\} \,, \quad k \in \nn \,,
\]
forms the base of neighbourhoods of $(0, 1)$. We sign
\[
V_k = H_f(W_k) = \{(x, y) \in \Cl{D}^{2}_{+} \,|\, f(x, y) > c_k\} \,, \quad k \in \nn \,.
\]

According to the choice of numbers $\{c_k\}_{k \geq 0}$ for every $c \geq c_{k}$ the inequality $f^{-1}(c) \subset U_{1/k}(v)$ holds true, $k \in \nn$. So
\[
V_k \subseteq U_{1/k}(v) \,, \quad k \in \nn \,.
\]
A family of sets $\{U_{1/k}(v)\}_{k \in \nn}$ forms the base of neighbourhoods of $v = H_f(0, 1)$ and for every $k \in \nn$ the inequality $H_f^{-1}(U_{1/k}(v)) \supseteq W_k = H_f^{-1}(V_k)$ is valid. Consequently, the mapping $H_f$ is continuous in $(0, 1)$, and hence it is continuous on $\Cl{D}^{2}_{+}$.

Q. E. D.
\end{proof}

Similarly to~\ref{nas_f_1} the following statement is proved.

\begin{nas}\label{nas_f_2}
Assume that a continuous function $f : \Cl{D}^{2}_{+} \rightarrow \rr$ complies with the requirements:
\begin{itemize}
	\item every point of the set $\mathring{D}^{2}_{+}$ is a regular point of $f$;
	\item a certain point $v \in \mathring{S}_{+}$ is a local extremum of  $f$, all the rest points of $\mathring{S}_{+}$ are regular boundary points of $f$;
	\item $f([-1, 1] \times \{0\}) = f_0$, $f(v) = f_1$ for some $f_0$, $f_1 \in \rr$, $f_0 \neq f_1$;
	\item through every point of a set $\Gamma$, which is dense in $(0, 1) \times \{0, 1\}$, passes an $U$-trajectory.
\end{itemize}

Then there exists a homeomorphism $H_{f} : \Cl{D}^{2}_{+} \rightarrow \Cl{D}^{2}_{+}$ such that $H_{f}(z) = z$ for all $z \in [-1, 1] \times \{0\}$ and $f \circ H_{f} (x, y) = (1-y) f_{0} + y f_{1}$ for every $(x, y) \in \Cl{D}^{2}_{+}$.
\end{nas}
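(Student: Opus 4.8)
The plan is to reduce the statement to Theorem~\ref{theorem_f_2} by post-composing $f$ with an affine self-homeomorphism of $\rr$, in complete analogy with the way Corollary~\ref{nas_f_1} is obtained from Theorem~\ref{theorem_f_1}. Put
\[
h : \rr \rightarrow \rr \,, \qquad h(t) = \frac{t-f_0}{f_1-f_0} \,,
\]
with inverse $h^{-1}(\tau) = (f_1-f_0)\tau + f_0 = \tau f_1 + (1-\tau)f_0$, and set $\tilde f = h\circ f$. Then $\tilde f\bigl([-1,1]\times\{0\}\bigr) = 0$ and $\tilde f(v) = 1$.

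Next I would check that $\tilde f$ satisfies all the hypotheses of Theorem~\ref{theorem_f_2}. The crucial point is that $v$ is a local \emph{maximum} of $\tilde f$: as in the proof of Theorem~\ref{theorem_f_2}, $f$ is strictly monotone along each of the two arcs of $\mathring{S}_{+}\cup\{v\}$ joining $v$ to $a_-$, resp.\ to $a_+$ (their interior points being regular boundary points, hence not local extrema), so $f$ varies monotonically from $f_0$ to $f_1$ along them; since $v$ is assumed to be a local extremum of $f$, this forces $f_1 > f_0$ when $v$ is a local maximum and $f_1 < f_0$ when $v$ is a local minimum, i.e.\ $h$ is orientation-reversing exactly in the latter case, and in both cases $\tilde f(v) = 1 > 0$ is the value at a local maximum. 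The remaining conditions transfer verbatim: $\tilde f^{-1}(\tilde f(z)) = f^{-1}(f(z))$ for every $z \in \Cl{D}^{2}_{+}$, so level sets are unchanged; a point is a regular point (resp.\ a regular boundary point) of $\tilde f$ as soon as it is one of $f$, as one sees by composing the chart $\varphi$ of Definition~\ref{ozn_f_1} (resp.\ $\psi$ of Definition~\ref{ozn_f_2}) with a suitable self-homeomorphism of the model domain that absorbs the affine factor $1/(f_1-f_0)$, so that the relation $f\circ\varphi^{-1}(z) = \mathrm{Re}\,z + c$ becomes $\tilde f\circ\varphi'^{-1}(w) = \mathrm{Re}\,w + h(c)$ — here Remark~\ref{remk_kanon_neighb} is used to shrink the canonical neighbourhood first; and a curve $\gamma$ is a $U$-trajectory of $\tilde f$ exactly when it is one of $f$, so the $U$-trajectory through each point of $\Gamma$ survives.

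With these verifications in hand, Theorem~\ref{theorem_f_2} provides a homeomorphism $H_{\tilde f} : \Cl{D}^{2}_{+} \rightarrow \Cl{D}^{2}_{+}$ that is the identity on $[-1,1]\times\{0\}$ and satisfies $\tilde f\circ H_{\tilde f}(x,y) = y$ for all $(x,y) \in \Cl{D}^{2}_{+}$. I would then simply set $H_f = H_{\tilde f}$: it fixes $[-1,1]\times\{0\}$ pointwise, and for every $(x,y) \in \Cl{D}^{2}_{+}$,
\[
f\circ H_f(x,y) = h^{-1}\bigl(\tilde f\circ H_{\tilde f}(x,y)\bigr) = h^{-1}(y) = (1-y)f_0 + y f_1 \,,
\]
as required.

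All the substantial work — equi-local-connectedness of the family of level curves, their continuity in the Fréchet metric, the $\mu$-parameterization construction, and the delicate gluing across the vertex $v$ — is already packaged inside Theorem~\ref{theorem_f_2}, so this deduction is short. The only place I expect to have to be careful is the second paragraph: matching the sign of $f_1 - f_0$ to the local-extremum type of $v$, and checking that regularity of a point is preserved under composition with an affine (possibly orientation-reversing) homeomorphism of the target, which is the elementary chart-reparameterization indicated above.
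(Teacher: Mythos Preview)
Your proposal is correct and follows exactly the route the paper indicates: the paper's proof is the one-line remark ``Similarly to Corollary~\ref{nas_f_1} the following statement is proved,'' i.e.\ post-compose with the affine map $h(t)=(t-f_0)/(f_1-f_0)$ and invoke Theorem~\ref{theorem_f_2}. Your additional care in matching the sign of $f_1-f_0$ to the extremum type of $v$ (so that $v$ becomes a local maximum of $\tilde f$) is the one point genuinely beyond the $I^2$ case, and you handle it correctly.
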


\begin{nas}\label{nas_f_3}
Let a continuous function $f : D^2 \rightarrow \rr$ satisfies the conditions:
\begin{itemize}
	\item every point of the set $\Int{D^2}$ is a regular point of $f$;
	\item certain points $v_{+}, v_{-} \in S = \Fr{D^2}$ are local maximum and minimum of $f$ respectively; all other points of $S$ are regular boundary points of $f$;
\end{itemize}

Then there exists a homeomorphism $H_{f} : D^2 \rightarrow D^2$ such that
\[
f \circ H_{f} (x, y) = \frac{(1-y) f(v_{-}) + (1+y) f(v_{+})}{2} \,, \quad (x, y) \in D^2 \,.
\]
\end{nas}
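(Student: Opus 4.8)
The plan is to cut $D^2$ along a level curve of $f$ into two half‑disks, straighten $f$ on each half by Corollary~\ref{nas_f_2}, and paste the two straightenings along the cut.

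First I would reduce to two half‑disks. Going around $\Fr{D^2}$ in the positive direction one meets $v_-$, then an arc on which $f$ is strictly monotone, then $v_+$, then a second arc on which $f$ is strictly monotone; so, taking $z_1 = v_-$, $z_2 = z_3 = v_+$, $z_4 = v_-$ (so that $\gamma_2 = \{v_+\}$ and $\gamma_4 = \{v_-\}$ are degenerate), the function $f$ is almost weakly regular on $D^2$ in the sense of Definition~\ref{ozn_almost_weak_regular} with $n = 2$, whence Lemma~\ref{lem_weak_regular_2} makes $f$ weakly regular on $D^2$. Exactly as in Corollary~\ref{nas_prop_f_0} one then obtains $f(D^2) = [f(v_-), f(v_+)]$ (assume $f(v_-) < f(v_+)$, otherwise replace $f$ by $-f$) and that for each $c \in (f(v_-), f(v_+))$ the level set $f^{-1}(c)$ is the support of a single simple arc meeting $\Fr{D^2}$ exactly in its two endpoints, one on each monotone boundary arc, the rest of the arc lying in $\Int{D^2}$. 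Set $c_0 = (f(v_-) + f(v_+))/2$ and $\Gamma = f^{-1}(c_0)$. Being a simple arc joining two points of $\Fr{D^2}$ with interior contained in $\Int{D^2}$, $\Gamma$ separates $D^2$ into two closed $2$‑disks $D_-$ and $D_+$ with $v_\pm \in D_\pm$ and $D_+ \cap D_- = \Gamma$ (Schoenflies, \cite{Z-F-C, Newman}).

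Next I would straighten each half. Fix a homeomorphism $h : \Gamma \to [-1,1]\times\{0\}$ carrying the endpoints of $\Gamma$ to $a_- = (-1,0)$ and $a_+ = (1,0)$, and, using Schoenflies, extend it for each sign to a homeomorphism $\chi_\pm : D_\pm \to \Cl{D}^{2}_{+}$ which restricts to $h$ on $\Gamma$ and sends $v_\pm$ to $(0,1)$. Then $\tilde f_\pm := f \circ \chi_\pm^{-1} : \Cl{D}^{2}_{+} \to \rr$ satisfies the hypotheses of Corollary~\ref{nas_f_2}: interior points are regular, $(0,1)$ is a local extremum of $\tilde f_\pm$ (a maximum for the $+$ sign, a minimum for the $-$ sign), the remaining points of $\mathring{S}_{+}$ are regular boundary points, $\tilde f_\pm \equiv c_0$ on $[-1,1]\times\{0\}$ and $\tilde f_\pm(0,1) = f(v_\pm) \neq c_0$, and through a dense subset of $(-1,1)\times\{0\}$ there pass $U$‑trajectories — indeed, the interior points of $\Gamma$ are regular points of $f$, and inside $D_\pm$ a short transversal arc through such a point is a $U$‑trajectory. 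Corollary~\ref{nas_f_2} therefore yields homeomorphisms $G_\pm : \Cl{D}^{2}_{+} \to \Cl{D}^{2}_{+}$ fixing $[-1,1]\times\{0\}$ pointwise with $\tilde f_\pm \circ G_\pm(x,y) = (1-y)c_0 + y\,f(v_\pm) = c_0 + y\,(f(v_\pm) - c_0)$.

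Finally I would glue. Write $D^2 = D^2_{\mathrm u} \cup D^2_{\mathrm l}$ with $D^2_{\mathrm u} = \{(x,y)\in D^2 : y \geq 0\} = \Cl{D}^{2}_{+}$ and $D^2_{\mathrm l} = \{(x,y)\in D^2 : y \leq 0\}$, and let $\rho : \Cl{D}^{2}_{+} \to D^2_{\mathrm l}$, $\rho(x,y) = (x,-y)$. Define $H_f$ to be $\chi_+^{-1} \circ G_+$ on $D^2_{\mathrm u}$ and $\chi_-^{-1} \circ G_- \circ \rho^{-1}$ on $D^2_{\mathrm l}$. On the common edge $[-1,1]\times\{0\}$ both prescriptions equal $x \mapsto h^{-1}(x,0)$, since $G_\pm$ fix that edge and $\chi_+|_\Gamma = \chi_-|_\Gamma = h$; hence $H_f$ is a well‑defined continuous bijection of $D^2$ (the two pieces being homeomorphisms onto $D_+$ and $D_-$, agreeing on $\Gamma = D_+ \cap D_-$), so it is a homeomorphism of the compact Hausdorff space $D^2$. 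On $D^2_{\mathrm u}$ one gets $f \circ H_f(x,y) = \tilde f_+ \circ G_+(x,y) = c_0 + y\,(f(v_+) - c_0)$, and on $D^2_{\mathrm l}$, using $c_0 - f(v_-) = f(v_+) - c_0$, one gets $f \circ H_f(x,y) = \tilde f_- \circ G_-(x,-y) = c_0 - y\,(f(v_-) - c_0) = c_0 + y\,(f(v_+) - c_0)$; so on all of $D^2$, $f \circ H_f(x,y) = c_0 + y\,(f(v_+) - c_0) = \bigl((1-y)f(v_-) + (1+y)f(v_+)\bigr)/2$, as claimed (the case $f(v_-) > f(v_+)$ follows by applying the above to $-f$).

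The substantive points are establishing weak regularity of $f$ together with the fact that $\Gamma$ is a single arc separating $D^2$ — this rests on Lemma~\ref{lem_weak_regular_2}, the level‑set description, and a Schoenflies argument — and the bookkeeping in the gluing: one must choose $\chi_+$ and $\chi_-$ to agree on $\Gamma$ so the pasted map is well defined, and one must see that the two linear profiles produced by Corollary~\ref{nas_f_2} fit together into a single one on $D^2$, which works precisely because $c_0$ is the arithmetic mean of $f(v_-)$ and $f(v_+)$. Neither point is deep, but this last matching is where a sign slip is easiest to make.
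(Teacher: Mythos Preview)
Your proof is correct and follows essentially the same route as the paper's: both split $D^2$ along the mid-level curve $f^{-1}(c_0)$ with $c_0=(f(v_-)+f(v_+))/2$, apply Corollary~\ref{nas_f_2} to each half (using that interior points of the cut are regular, hence admit $U$-trajectories), and glue via a reflection, the matching working precisely because $c_0$ is the arithmetic mean. The only cosmetic difference is that you make the Schoenflies step and the agreement $\chi_+|_\Gamma=\chi_-|_\Gamma$ explicit, whereas the paper packages this into the single assertion that homeomorphisms $H_\pm:D_\pm\to\Cl{D}^{2}_{+}$ with $H_\pm\circ\zeta(t)=(2t-1,0)$ exist; your case distinction on the sign of $f(v_+)-f(v_-)$ is in fact unnecessary, since $v_+$ is the global maximum and $v_-$ the global minimum once weak regularity is established.
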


\begin{proof}
Similarly to Proposition~\ref{prop_f_0} it is proved that the function $f$ is weakly regular on $D^2$.

Let $\gamma_1$, $\gamma_2 = \{v_{-}\}$, $\gamma_3$ and $\gamma_4 = \{v_{+}\}$ be the arcs from Definition~\ref{ozn_weak_regular}.
By analogy with Corollary~\ref{nas_prop_f_0} it is proved that $f(z) \in (f(v_{-}), f(v_{+}))$ for all $z \in D^2 \setminus \{v_{+}, v_{-}\}$, and also for each $c \in (f(v_{-}), f(v_{+}))$ a level set $f^{-1}(c)$ is a support of a simple continuous curve $\zeta_c : I \rightarrow D^2$, moreover $\zeta_c(0) \in \mathring{\gamma}_1$, $\zeta_c(1) \in \mathring{\gamma}_3$ and $\zeta_c(t) \in \Int D^2$ for $t \in (0, 1)$.

Let $c_0 = (f(v_{-}) + f(v_{+}))/2$. It is straightforward that a set $f^{-1}(c_0)$ divides disk $D^2$ into two parts, one of which contains the point $v_{-}$, the other contains $v_{+}$. We denote closures of connected components of $D^2 \setminus f^{-1}(c_0)$ by $D_{-}$ and $D_{+}$ respectively. Each of these sets is homeomorphic to closed disk and correlations $D_{-} = \{z \in D^2 \,|\, f(z) \leq c_0\}$, $D_{+} = \{z \in D^2 \,|\, f(z) \geq c_0\}$, $v_{-} \in D_{-}$, $v_{+} \in D_{+}$, $D_{-} \cap D_{+} = f^{-1}(c_0)$ are fulfilled.

The set $f^{-1}(c_0)$ is the support of a simple continuous curve $\zeta : I \rightarrow D^2$ (see above). For every $t \in (0, 1)$ a point $\zeta(t)$ is a regular point of $f$, therefore through this point passes a $U$-trajectory and it is divided by the point $\zeta(t)$ into two arcs, one of which is contained in $D_{-}$, the other belongs to $D_{+}$. Consequently, in each of the sets $D_{-}$ and $D_{+}$ through the point $\zeta(t)$ passes a $U$-trajectory, so we can take advantage of Corollary~\ref{nas_f_2} and by means of a straightforward verification we establish validity of the following claims:
\begin{itemize}
	\item there exists such a homeomorphism $H_{-} : D_{-} \rightarrow \Cl{D}^{2}_{+}$ that $H_{-} \circ \zeta(t) = (2t-1, 0)$, $t \in I$ and
\begin{gather*}
f \circ H_{-}^{-1}(x, y) = (1-y) c_0 + y f(v_{-}) = \\
= \frac{(1-y)(f(v_{-})+f(v_{+}))}{2} + y f(v_{-}) = \frac{(1+y)f(v_{-})}{2} + \frac{(1-y)f(v_{+})}{2} \,;
\end{gather*}
	\item there exists a homeomorphism $H_{+} : D_{+} \rightarrow \Cl{D}^{2}_{+}$ which complies with the equalities $H_{+} \circ \zeta(t) = (2t-1, 0)$, $t \in I$ and
\[
f \circ H_{+}^{-1}(x, y) = (1-y) c_0 + y f(v_{+}) = \frac{(1-y)f(v_{-})}{2} + \frac{(1+y)f(v_{+})}{2} \,.
\]
\end{itemize}

Let us consider a set $\Cl{D}^{2}_{-} = \{ (x, y) \in D^2 \,|\, y \leq 0 \}$ and a homeomorphism $Inv : \Cl{D}^{2}_{+} \rightarrow \Cl{D}^{2}_{-}$, $Inv(x, y) = (x, -y)$. A mapping $\hat{H}_{-} = Inv \circ H_{-} : D_{-} \rightarrow \Cl{D}^{2}_{-}$ is obviously a homeomorphism and is compliant with the equalities $\hat{H}_{-} \circ \zeta(t) = (2t-1, 0)$, $t \in I$ and
\[
f \circ \hat{H}_{-}^{-1}(x, y) = \frac{(1+(-y))f(v_{-})}{2} + \frac{(1-(-y))f(v_{+})}{2} = \frac{(1-y)f(v_{-})}{2} + \frac{(1+y)f(v_{+})}{2} \,.
\]

From the above it easily follows that a mapping $H_f : D^2 \rightarrow D^2$,
\[
H_f(x, y) =
\begin{cases}
\hat{H}_{-}(x, y) \,, & \mbox{if } (x, y) \in D_{-} \,, \\
H_{+}(x, y) \,, & \mbox{if 	} (x, y) \in D_{+} \,,
\end{cases}
\]
is a homeomorphism and satisfies the hypothesis of Corollary.
\end{proof}

Let us summarize claims proved in this subsection.

Taking into account Lemma~\ref{lem_number_of_arks} we can give the following definition.
\begin{ozn}\label{ozn_regular_on_disk}
Let $f$ be a weakly regular function on the disk $D$, let $\gamma_1, \ldots, \gamma_4$ be arcs from Definition~\ref{ozn_weak_regular}. If through every point of a set $\Gamma$ which is dense in $\mathring{\gamma}_2 \cup \mathring{\gamma}_4$ passes a $U$-trajectory, then the function $f$ is called \emph{regular on $D$}.
\end{ozn}

\begin{theorem}\label{theorem_f_3}
Let $f$ be a regular function on the disk $D$, let $\gamma_1, \ldots, \gamma_4$ be arcs from Definition~\ref{ozn_weak_regular}. Let $D' = I^2$ if $\mathring{\gamma}_2 \neq \emptyset$ and $\mathring{\gamma}_4 \neq \emptyset$; $D' = D^2$ if $\mathring{\gamma}_2 \cup \mathring{\gamma}_4 = \emptyset$; $D' = \Cl{D}^{2}_{+}$ if exactly one from the sets $\mathring{\gamma}_2$ or $\mathring{\gamma}_4$ is empty.

Let $\phi : \Fr{D} \rightarrow \Fr{D'}$ be a homeomorphism such that $\phi(K) = K'$, where
\begin{gather*}
K = f^{-1}\bigl(\min_{z \in D}(f(z)) \cup \max_{z \in D}(f(z))\bigr) \,,\\
K' = \Bigl\{ (x, y) \in D' \,\bigl|\, y \in  \bigl\{\min_{(x, y) \in D'}(y), \max_{(x, y) \in D'}(y) \bigr\}\bigr.\Bigr\} \,.
\end{gather*}

Then there exists a homeomorphism $H_f$ of $D$ onto $D'$ such that $H_f|_K = \phi$ and $f \circ H_f^{-1}(x, y) = ay + b$, $(x, y) \in D'$ for certain $a, b \in \rr$, $a \neq 0$.
\end{theorem}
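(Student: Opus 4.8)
The plan is to reduce each of the three cases to the corresponding rectification statement already proved --- Corollary~\ref{nas_f_1} for $D' = I^2$, Corollary~\ref{nas_f_2} for $D' = \Cl{D}^{2}_{+}$, and Corollary~\ref{nas_f_3} for $D' = D^2$ --- by transporting $f$ onto the model domain along a homeomorphism that realizes $\phi$ on the boundary. First I would record the structure of $f$: being regular it is weakly regular, so by Lemma~\ref{lem_number_of_arks} we have $\mathcal{N}(f) = 2$ and $\Fr{D} = \gamma_1 \cup \gamma_2 \cup \gamma_3 \cup \gamma_4$ with $f$ strictly monotone on the nondegenerate arcs $\gamma_1, \gamma_3$ and constant on $\gamma_2, \gamma_4$; moreover, arguing as in the proof of Lemma~\ref{lem_weak_regular_2}, the sets $f^{-1}(\min_{D} f)$ and $f^{-1}(\max_{D} f)$ are precisely $\gamma_2$ and $\gamma_4$ in one order or the other, so that $K = \gamma_2 \cup \gamma_4$. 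Since $\phi$ is a homeomorphism of circles with $\phi(K) = K'$, it carries the connected components of $K$ onto those of $K'$ respecting their homeomorphism types and carries the two components $\mathring{\gamma}_1, \mathring{\gamma}_3$ of $\Fr{D} \setminus K$ onto the two components of $\Fr{D'} \setminus K'$; in particular, for $D' = I^2$ and $D' = \Cl{D}^{2}_{+}$ it automatically sends $\gamma_2, \gamma_4$ onto the ``horizontal'' pieces of $\Fr{D'}$ and $\mathring{\gamma}_1, \mathring{\gamma}_3$ onto the ``lateral'' ones.

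Next, using the Schoenflies theorem (see~\cite{Z-F-C, Newman}) I would extend $\phi$ to a homeomorphism $\rho : D \to D'$ and put $g = f \circ \rho^{-1} : D' \to \rr$. As $\rho$ is a homeomorphism it carries regular points of $f$ to regular points of $g$, regular boundary points to regular boundary points, and $U$-trajectories of $f$ to $U$-trajectories of $g$ (indeed $g \circ (\rho \circ \gamma) = f \circ \gamma$), and it preserves density of subsets of $\Fr{D}$; hence a routine verification shows that $g$ satisfies the hypotheses of Corollary~\ref{nas_f_1} when $D' = I^2$ (with $f_0, f_1$ the two values taken by $f$ on $\gamma_2, \gamma_4$), of Corollary~\ref{nas_f_2} when $D' = \Cl{D}^{2}_{+}$ (with local extremum point $v = (0,1)$, the image under $\rho$ of the degenerate one among $\gamma_2, \gamma_4$), and of Corollary~\ref{nas_f_3} when $D' = D^2$. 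Applying the relevant corollary yields a homeomorphism $H_g : D' \to D'$ with $g \circ H_g(x, y)$ affine in $y$, hence of the form $a y + b$ with $a \neq 0$ since $\min_{D} f \neq \max_{D} f$. For $D' = I^2$ and $D' = \Cl{D}^{2}_{+}$ the corollary moreover asserts that $H_g$ is the identity on $K'$ (for $\Cl{D}^{2}_{+}$ one uses that $g^{-1}(g(v)) = \{v\} = \{(0,1)\}$), so the homeomorphism $H_f = H_g^{-1} \circ \rho : D \to D'$ satisfies $f \circ H_f^{-1}(x,y) = g \circ H_g(x,y) = a y + b$ and $H_f|_K = H_g^{-1} \circ \phi|_K = \phi|_K$, which is what we want.

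The one case requiring extra care is $D' = D^2$, where $K = \{p_{-}, p_{+}\}$ with $p_{-} = f^{-1}(\min_{D} f)$ and $p_{+} = f^{-1}(\max_{D} f)$ single points, and $K' = \{(0,-1), (0,1)\}$ is also a pair of points, while Corollary~\ref{nas_f_3} does not keep $K'$ fixed: the formula for $g \circ H_g$ forces $H_g$ to send $(0,-1)$ to the minimum point of $g$ and $(0,1)$ to its maximum point. If $\phi(p_{-}) = (0,-1)$ (hence $\phi(p_{+}) = (0,1)$), one takes $\rho$ extending $\phi$ and checks that then $H_g$ fixes $K'$ pointwise, so $H_f = H_g^{-1} \circ \rho$ works exactly as before. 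Otherwise $\phi(p_{-}) = (0,1)$ and $\phi(p_{+}) = (0,-1)$; then I would set $\psi = N \circ \phi$, where $N(x,y) = (x,-y)$ is the reflection of $D^2$, so that $\psi$ falls into the previous case, extend $\psi$ to a homeomorphism $\rho : D \to D^2$, obtain $\tilde{H}_f = H_g^{-1} \circ \rho$ with $\tilde{H}_f|_K = \psi|_K$ and $f \circ \tilde{H}_f^{-1}$ affine in $y$, and finally put $H_f = N \circ \tilde{H}_f$; then $H_f|_K = N \circ \tilde{H}_f|_K = N \circ \psi|_K = \phi|_K$ and $f \circ H_f^{-1}(x, y) = f \circ \tilde{H}_f^{-1}(x, -y)$ is again of the form $a y + b$ with $a \neq 0$. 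The only real obstacle in the whole argument is this last piece of bookkeeping --- arranging that the model rectification can be chosen so as to agree with $\phi$ on $K$, rather than merely to respect its combinatorial type; everything else is a direct transfer along $\rho$ of the corollaries already established.
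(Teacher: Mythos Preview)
Your proposal is correct and is exactly the argument the paper has in mind: the paper presents Theorem~\ref{theorem_f_3} as a summary (``Let us summarize claims proved in this subsection'') of Corollaries~\ref{nas_f_1}, \ref{nas_f_2}, \ref{nas_f_3} and gives no separate proof, so your Schoenflies-extension-and-transport argument simply fills in the details that the paper leaves implicit. Your handling of the boundary condition $H_f|_K = \phi$ --- in particular the reflection trick in the $D' = D^2$ case --- is the right bookkeeping and is not addressed explicitly in the paper.
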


\begin{theorem}
Let $f$ and $g$ be regular functions on a closed 2-disk $D$.

Every homeomorphism $\varphi_0 : \partial D \rightarrow \partial D$ of the frontier $\partial D$ of $D$ which complies with the equality $g \circ \varphi_0 = f$ can be extended to a homeomorphism $\varphi : D \rightarrow D$ which satisfies the equality $g \circ \varphi = f$.
\end{theorem}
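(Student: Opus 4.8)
The plan is to normalise $f$ and $g$ simultaneously by Theorem~\ref{theorem_f_3} and then glue the two normalisations. First I would extract what $g\circ\varphi_0=f$ gives on $\partial D$. Since $\varphi_0$ is a homeomorphism of $\partial D$ the sets $f(\partial D)$ and $g(\partial D)$ coincide, and since $f,g$ are regular their extreme values are attained on $\partial D$ (Lemma~\ref{lem_weak_regular_2}), so $f(D)=g(D)=[m,M]$. Moreover $\varphi_0$ carries $f^{-1}(m)$ onto $g^{-1}(m)$ and $f^{-1}(M)$ onto $g^{-1}(M)$, hence it maps $K_f:=f^{-1}(\{m,M\})$ homeomorphically onto $K_g:=g^{-1}(\{m,M\})$, and $f^{-1}(m)$ is a single point exactly when $g^{-1}(m)$ is, and similarly for $M$. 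Thus $f$ and $g$ have the same number of degenerate level arcs, so the model disk $D'$ associated to them by Theorem~\ref{theorem_f_3} is the same; I keep the notation $D'$, $K'$ and the second-coordinate function $\pr_{2}:D'\to\rr$ of that theorem.

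Then I would apply Theorem~\ref{theorem_f_3} twice. Choose any homeomorphism $\phi_f:\partial D\to\partial D'$ with $\phi_f(K_f)=K'$ (one exists, since $K_f$ and $K'$ each split into two disjoint pieces with the same point/arc pattern) and let $H_f:D\to D'$ be the resulting homeomorphism, with $H_f|_{K_f}=\phi_f|_{K_f}$ and $f\circ H_f^{-1}(x,y)=a_fy+b_f$, $a_f\ne0$. Now put $\phi_g:=(H_f|_{\partial D})\circ\varphi_0^{-1}:\partial D\to\partial D'$; it is a homeomorphism and $\phi_g(K_g)=H_f(\varphi_0^{-1}(K_g))=H_f(K_f)=K'$, so Theorem~\ref{theorem_f_3} applies to $g$ with $\phi_g$ and yields $H_g:D\to D'$ with $H_g|_{K_g}=\phi_g|_{K_g}$ and $g\circ H_g^{-1}(x,y)=a_gy+b_g$, $a_g\ne0$. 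This particular choice of $\phi_g$ is what makes the two affine functions equal: $H_g^{-1}$ sends each of the two horizontal edges of $K'$ onto the $\varphi_0$-image of the corresponding edge of $K_f$, on which (by $g\circ\varphi_0=f$) $g$ is constant with exactly the value that $f\circ H_f^{-1}$ has on that edge of $D'$; evaluating the two functions, affine in $y$, at the two distinct extreme values of $\pr_{2}$ on $D'$ forces $a_g=a_f=:a$, $b_g=b_f=:b$.

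Finally, consider $\sigma:=(H_g|_{\partial D})\circ\varphi_0\circ(H_f|_{\partial D})^{-1}:\partial D'\to\partial D'$. Writing $p=H_f^{-1}(w)$ for $w\in\partial D'$ we get $a\,\pr_{2}(\sigma(w))+b=g(\varphi_0(p))=f(p)=a\,\pr_{2}(w)+b$, and since $a\ne0$ this means $\pr_{2}\circ\sigma=\pr_{2}$; and for $w\in K'$, using $H_f|_{K_f}=\phi_f$, $H_g|_{K_g}=\phi_g$ and the definition of $\phi_g$ one gets $\sigma(w)=w$. So $\sigma$ is a $\pr_{2}$-preserving homeomorphism of $\partial D'$ that fixes $K'$ pointwise. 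Each fibre $\pr_{2}^{-1}(c)\cap D'$ is a (possibly degenerate) segment with endpoints in $\partial D'$, and the fibres that are degenerate or entirely contained in $\partial D'$ are exactly those lying inside $K'$; since $\sigma$ fixes $K'$ pointwise, it extends, fibrewise by affine interpolation, to a $\pr_{2}$-preserving homeomorphism $\Psi:D'\to D'$ with $\Psi|_{\partial D'}=\sigma$. Put $\varphi:=H_g^{-1}\circ\Psi\circ H_f:D\to D$; it is a homeomorphism, for $p\in\partial D$ we have $\varphi(p)=H_g^{-1}(\sigma(H_f(p)))=H_g^{-1}(H_g(\varphi_0(p)))=\varphi_0(p)$, and for $p\in D$, since $\Psi$ preserves $\pr_{2}$,
\[
g(\varphi(p))=g\circ H_g^{-1}\bigl(\Psi(H_f(p))\bigr)=a\,\pr_{2}(H_f(p))+b=f\circ H_f^{-1}(H_f(p))=f(p) \,,
\]
so $\varphi$ extends $\varphi_0$ and satisfies $g\circ\varphi=f$. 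The substantive points — and the places where care is needed — are that $f$ and $g$ give the same model $D'$ and that the affine normalisations can be arranged to coincide (this is why $\phi_g$ is built from $H_f$ and $\varphi_0$ rather than chosen freely, which also makes $\sigma$ fix $K'$ so that the fibrewise extension causes no trouble on the fully boundary or degenerate fibres); everything else is routine once Theorem~\ref{theorem_f_3} is granted.
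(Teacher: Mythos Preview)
Your proof is correct and follows the same approach as the paper, which simply states that the theorem ``is a straightforward corollary from Theorem~\ref{theorem_f_3}'' and gives no further details. You have carefully supplied those details---in particular the choice $\phi_g=(H_f|_{\partial D})\circ\varphi_0^{-1}$ that forces the two affine normalisations to coincide, and the fibrewise extension $\Psi$ of the residual boundary map $\sigma$---so your argument is a legitimate fleshing-out of what the paper leaves implicit.
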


\begin{proof}
This statement is a straightforward corollary from Theorem~\ref{theorem_f_3}.
\end{proof}


\begin{remk}
Everything said here about $\mu$-length of a curve and about Frechet distance between curves remains true for continuous curves in every separable metric space (see~\cite{Morse_36}). In particular, proof of Lemma~\ref{lemma_f_4} is literally transferred to that case.
\end{remk}

\begin{remk}
In order to prove Theorem~\ref{theorem_f_1} we used techniques analogous to the one of~\cite{Toki_51}.
\end{remk}


\end{document}